\newtheorem{theorem}{Theorem}[section]
\newtheorem{prop}[theorem]{Proposition}
\newtheorem{defn}[theorem]{\rm\textsc{Definition}}
\newtheorem{lem}[theorem]{Lemma}
\newtheorem{coro}[theorem]{Corollary}
\newtheorem{thm}[theorem]{Theorem}
\newtheorem{rem}[theorem]{\rm\textsc{Remark}}
\newtheorem{exam}[theorem]{\rm\textsc{Example}}
\newcommand{\bslash}{\kern-0.1em\texttt{\scalebox{0.6}[1]{/}}\kern-0.15em \texttt{\scalebox{0.6}[1]{/}}}
\begin{document}
%%%%%%%%%%%%%%%%%%%%%%%%%%%%%%%%%%%%%%%%%%%%%%%%%%%%%%%%%%%%%%%%%%%%%%%%%
\title[Strong quasi-MV* algebras and their logics]{Strong quasi-MV* algebras and their logics}
%%%%%%%%%%%%%%%%%%%%%%%%%%%%%%%%%%%%%%%%%%%%%%%%%%%%%%%%%%%%%%%%%%%%%%%%%

\author{Lei Cai}
\address{School of Mathematical Sciences, University of Jinan, No. 336, West Road of Nan Xinzhuang,
         Jinan, Shandong, 250022 P.R. China.}
\email{cailei@stu.ujn.edu.cn}

\author{Wenjuan Chen}
\address{School of Mathematical Sciences, University of Jinan, No. 336, West Road of Nan Xinzhuang,
         Jinan, Shandong, 250022 P.R. China.}
\email{wjchenmath@gmail.com}

\begin{abstract}
In this paper, we introduce the subvariety of quasi-MV* algebras in order to characterize the logic which is related to complex fuzzy logic. First, we give the definitions of strong quasi-MV* algebra and strong quasi-Wajsberg* algebra and show that they are term equivalence. Second, we present the representation theorem and the standard completeness of strong quasi-MV* algebras. Moreover, we discuss the properties of terms in the language of Wajsberg* algebras and strong quasi-Wajsberg* algebras. Finally, we establish the logical system associated with strong quasi-Wajsberg* algebra and prove that the logical system is sound and complete.

\end{abstract}

\keywords{Strong quasi-MV* algebras, Strong quasi-Wajsberg* algebras, Quasi-MV* algebras, MV*-algebras, Soundness, Completeness}

\maketitle
\baselineskip=16.2pt

%%%%%%%%%%%%%%%%%%%%%%%%%%%Contents%%%%%%%%%%%%%%%%%%%%%%%%
%\textcolor{blue}{\tableofcontents{}}
\dottedcontents{section}[1.16cm]{}{1.8em}{5pt}
\dottedcontents{subsection}[2.00cm]{}{2.7em}{5pt}
%\dottedcontents{subsubsection}[2.86cm]{}{3.4em}{5pt}
\section{Introduction}

Motivated by philosophical and computational problems of vagueness and imprecision, fuzzy logic has become a significant field in non-classical logic. Many researches in this area focus on many-valued logics with linearly ordered truth values and have yielded elegant and deep mathematical theories, thus more and more researchers pay their attentions on fuzzy logic.
It is well-known that the algebraic study of non-classical logic is a general and important tool which provides an abstract insight into the fundamental logical properties.
Therefore, with the extension and generalization of fuzzy logic, many non-classical logical algebras have been introduced such as BL-algebras \cite{7.1}, EQ-algebras \cite{14.1}, MV*-algebras \cite{11}, quantum B-algebras \cite{15.1},  quasi-MV algebras \cite{10} and so on.

Quasi-MV* algebras were introduced in \cite{9} as the generalization of MV*-algebras and quasi-MV algebras. The recent research presents that quasi-MV* algebras are closely related to complex fuzzy logic \cite{8}. Complex fuzzy logic (CFL, for short) is an extension of traditional fuzzy logic (Zadeh's fuzzy logic) that has shown promise in date stream mining (particularly, time series forecasting). Due to the circle structured codomain of the complex membership functions, the study of formal CFL in the narrow sense was developed slowly. In order to promote this study, some authors tried to introduce the ideas and methods of traditional fuzzy logic into CFL and obtained some related results \cite{4,5,7,14,16}. For example, Tamir et al. gave the Cartesian representation of the complex values of complex membership functions in the following way: $u(V,z)=u_{r}(V)+ju_{i}(z)$, where $u_{r}(V), u_{i}(z)$ are in the real unit interval $[0, 1]$ and $\sqrt{-1}=j$ \cite{16}. Based on this representation, Tamir et al. introduced generalized complex propositional fuzzy logic using direct generalization of traditional fuzzy logic. However, as we know,  the Polar representation of any complex number $z$ to its Cartesian representation by
Euler formula is $z = \rho e^{j\theta} =\rho \cos(\theta) + j\rho \sin(\theta)$ where $\rho$ is a real number, $\cos(\theta), \sin(\theta)$ are in the real closed interval $[-1, 1]$, and $\sqrt{-1}=j$. Therefore, following Tamir's ideas,  we consider that the set $[-1, 1]\times[-1, 1]$ is more suitable to express the cartesian representation of CFL.

For this reason, we must mention MV*-algebras. The original investigation of an MV*-algebra was to model the properties of the real interval $[-1,1]$ equipped with the truncated addition $x\oplus y=\max\{-1,\min\{1,x+y\}\}$ and negation $-x$, paralleling work done for MV-algebras \cite{2}. The properties of MV*-algebras and their associated logic $\L^{*}$ were studied further in \cite{12,13}. In \cite{2}, Chang showed that any element in an MV*-algebra is the sum of its positive part and negative part. Moreover, the set of all positive elements of an MV*-algebra can be an MV-algebra which is isomorphic to the algebra of all negative elements of the same MV*-algebra.
Maybe because MV*-algebras are so near to MV-algebras, the progress of MV*-algebras or the logic $\L^{*}$ was rare to see for a long time.
In fact, if we consider the geometry meaning of the closed interval $[-1, 1]$ in the complex plane, it is easy to see that $[-1,1]$ can be regarded as the set of all
points satisfying the distance to the origin is not more than $1$ on the real axis, i.e., $[-1,1]=\{z\in \mathbb{R}||z|\leq 1\}$, thus the set $\{z\in \mathbb{C}||z|\leq1\}$ of all points satisfying the distance to the origin no more than $1$ on the complex plane is a natural generalization of the set $\{z\in \mathbb{R}||z|\leq 1\}=[-1,1]$, and then CFL can be regarded as a generalization of the logic $\L^{*}$. Thereby, we think that the generalization of MV*-algebras will have great role in the study of algebraic structure of CFL.

Another motivation to consider the generalization of MV*-algebras is that, on the basis of the existing research results, the algebraic structures in traditional fuzzy logic can not be easily transplanted to CFL \cite{4,5,14}.
For example, the algebraic product in CFL does not hold the properties in traditional fuzzy logic, the complex fuzzy $S$-implication operator is not continuous while it is continuous in traditional fuzzy logic.
In \cite{3}, Dai constructed quasi-MV algebras in CFL. The standard quasi-MV algebras include the square $\textbf{S}=([0,1]\times [0,1]; \oplus, ',0,1)$  and the disc $\textbf{D}=(\{\langle a,b\rangle\in \mathbb{R}^{2}|(1-2a)^{2}+(1-2b)^{2}\leq 1\};\oplus, ',0,1)$. Dai defined a mapping $F$ from the disc standard quasi-MV algebra $\textbf{D}$ to the universe $\{\langle a,b\rangle\in \mathbb{R}^{2}|a^{2}+b^{2}\leq 1\}=\{z\in \mathbb{C}||z|\leq1\}$  of CFL and showed that the set $\{z\in \mathbb{C}||z|\leq1\}$ can be a quasi-MV algebra.
Since the universe of CFL contains the closed interval $[-1,1]$, we always wish that the algebraic structure of CFL restricted to the set $[-1,1]$ is inherited to the original algebra. Based on Dai's valuable consideration on CFL, interesting enough, we find that quasi-MV* algebras are just ones we need in the study of CFL.

In \cite{8}, we had discussed some properties of quasi-MV* algebras and established the logical system associated with quasi-MV* algebras. In the process of research, we found an interesting subvariety of quasi-MV* algebras which is important for the study of CFL. Hence in this paper, we investigate the properties of this subvariety and its associated logical system further. Weak implicative logic introduced by Cintula and Noguera is a general non-classical logical system including many known logics \cite{2.1}. Although the connectives $\rightarrow$ and $\neg$ in our logical system are similar to $\L^{*}$ and $\L^{*}$ is a weak implicative logic, we notice that the Modus Ponens rule does not hold in the new logical system, so it is not a weak implicative logic in general. The paper is arranged as follows. In Section 2, we recall some definitions and results which are related to quasi-MV* algebras. In Section 3, we introduce the strong quasi-MV* algebra and strong quasi-Wajsberg* algebra and show the term equivalence between them. We also prove the standard completeness of strong quasi-MV* algebras. In Section 4, we establish the logical system associated with strong quasi-Wajsberg* algebras and discuss the soundness and completeness of this logical system.

\section{Preliminary}

In this section, we recall some definitions and results which will be used in what follows.

\begin{defn}\cite{11}
Let $\textbf{B}=\langle B;\oplus,{-},0,1\rangle$ be an algebra of type $\langle2,1,0,0\rangle$. If the following conditions are satisfied for any $x,y,z\in B$,

(MV*1)  $x \oplus y =y \oplus x$,

(MV*2)  $(1\oplus x) \oplus(y \oplus( 1\oplus z))=((1\oplus x) \oplus y) \oplus(1\oplus z)$,

(MV*3)  $x \oplus (-x)=0$,

(MV*4)  $(x \oplus 1)\oplus 1 =1$,

(MV*5)  $x \oplus 0 =x$,

(MV*6)  ${-}(x \oplus y) =(-x) \oplus (-y)$,

(MV*7)  ${-}(-x) =x$,

(MV*8)  $x \oplus y =(x^{+} \oplus y^{+} )\oplus (x^{-}\oplus y^{-})$,

(MV*9)  $(-x \oplus (x \oplus y))^{+} ={-}(x^{+})\oplus (x^{+} \oplus y^{+})$,

(MV*10)  $x \vee y =y \vee x$,

(MV*11)  $x \vee (y\vee z) =(x \vee y)\vee z$,

(MV*12)  $x \oplus (y\vee z) =(x \oplus y)\vee(x\oplus z)$,

\noindent in which ones define
$x^{+}=1 \oplus(-1 \oplus x)$,
$x^{-}=-1 \oplus(1 \oplus x)$, and
$x\vee y=(x^{+}\oplus(-x^{+}\oplus y^{+})^{+})\oplus(x^{-}\oplus(-x^{-}\oplus y^{-})^{+})$, then $\textbf{B}=\langle B;\oplus,{-},0,1\rangle$ is called an \emph{MV*-algebra}. \end{defn}

The variety of MV*-algebras is denoted by $\mathbb{MV^\ast}$.
Given an MV*-algebra \textbf{B}, we can define $x \leq y$ iff $x\vee y=y$ for any $x,y \in B$. If $0\leq x$, then the element $x$ is called \emph{positive}. If $x$ is a positive element, we also say $x\geq 0$. If $x\leq 0$, then the element $x$ is called \emph{negative}. We denote $B_{\geq 0}=\{ x\in B | x\geq 0\}$ and $B_{\leq 0}=\{ x\in B | x\leq 0 \}$. Moreover, we denote $B^+ = \{ x^+ \in B| x\in B \}$ and $B^- = \{ x^- \in B| x\in B \}$. In \cite{11}, authors showed that $B_{\geq 0} = B^+$ and $B_{\leq 0} = B^-$.

\begin{defn}\cite{8}\label{wdy}
Let $\textbf{M} = \langle M; \to, \neg, 1 \rangle$ be an algebra of type $\langle 2,1,0 \rangle$. If the following conditions are satisfied for any $x, y, z \in M$,

(W*1)  $x\to y = (\neg y) \to (\neg x)$,

(W*2)  $(x\to 1)\to ((y\to 1)\to z)=(y\to 1)\to ((x\to 1)\to z)$,

(W*3)  $(1\to x)\to 1=1$,

(W*4)  $(y\to y)\to x=x$,

(W*5)  $x\to y = (y^+ \to x^-)\to (x^+ \to y^-)$,

(W*6)  $\neg(x\to y) = y\to x$,

(W*7)  $\neg \neg x=x$,

(W*8)  $(x\to (\neg x \to y))^+ = x^+ \to (\neg x^+ \to y^+)$,

(W*9)  $x\vee y = y\vee x$,

(W*10)  $x\vee(y\vee z)= (x\vee y)\vee z$,

(W*11)  $x\to (y\vee z)= (x\to y)\vee (x\to z)$,

\noindent in which ones define $x^+ = (x\to 1)\to 1$, $x^- = (x\to \neg 1)\to \neg 1$, and $x\vee y = ((x^+ \to y^+)^+ \to (\neg x)^-) \to ((y^- \to x^-)^- \to x^-)$, then $\textbf{M} = \langle M; \to, \neg, 1 \rangle$ is called a \emph{Wajsberg* algebra}.
\end{defn}

The variety of Wajsberg* algebras is denoted by $\mathbb{W^\ast}$. In \cite{8}, authors had proved that Wajsberg* algebras and MV*-algebras are term equivalence. Hence many results about MV*-algebras can be directly built to Wajsberg* algebras.

The logic $\L^*$ associated with MV*-algebras, was introduced by Chang in \cite{2} and investigated further in \cite{12,13}. Below we list the axioms and rules of $\L^*$.

Let $F_{\scriptscriptstyle L^*}(V)$ be the formulas set generated by all propositional variables set $V$ with logical connectives $\to$, $\neg$ and a constant $1$. Then $\langle F_{\scriptscriptstyle L^*}(V); \to, \neg, 1 \rangle$ is a free algebra.
For any $p, q\in F_{\scriptscriptstyle L^*}(V)$, the axioms of $\L^{*}$ are defined as follows:

\noindent\textbf{Axioms schemas}

(P1) $(p\to q)\leftrightarrow (\neg q\to \neg p)$,

(P2) $p \leftrightarrow ((q\to q)\to p)$,

(P3) $\neg(p\to q)\leftrightarrow (q\to p)$,

(P4) $p\to 1$,

(P5) $1\leftrightarrow ((1\to p)\to 1)$,

(P6) $((p\to 1)\to((q\to 1)\to r))\to ((q\to 1)\to((p\to 1)\to r))$,

(P7)  $(p\to q)\leftrightarrow((q^{+}\to p^{-})\to (p^{+}\to q^{-}))$ where $p^+ = (p \to 1)\to 1 $ and $p^- = (p \to \neg 1)\to \neg 1 $,

(P8) $(p\to (\neg p\to q))^+\leftrightarrow (p^+\to (\neg p^+\to q^+))$,

(P9) $(p\to (q\vee r))\leftrightarrow((p\to r)\vee (p\to q))$ where $p\vee q = (((p^{+}\to q^{+})^{+})\to(\neg p)^{-} )\to((q^{-}\to p^{-})^{-}\to p^{-})$,

(P10) $(p\vee (q\vee r))\leftrightarrow ((p\vee q)\vee r)$.

The deduction rules of $\L^{*}$ are as follows.

\noindent\textbf{Rules of deduction}

(R1) $ p, p\to q\vdash q$,

(R2) $p\to q, r\to t\vdash (q\to r)\to (p\to t)$,

(R3) $p\vdash p^-$.

Recall that a valuation of $\L^*$ is a mapping $e:F_{\scriptscriptstyle L^*}(V) \rightarrow [-1,1]$ satisfying for any $p,q\in F_{\scriptscriptstyle L^*}(V)$, $e(p\oplus q)=\max \{ -1,\min \{ e(q)+e(p), 1 \} \}$ and $e(\neg p)=-e(p)$.
According to \cite{2} and \cite{13}, we have that any valuation of $\L^*$ can be seen as a homomorphism from $\textbf{F}_{\scriptscriptstyle L^*}(\textbf{V})$ to MV*-algebra. Moreover, for any $q \in F_{\scriptscriptstyle L^*}(V)$, we have that $\vdash_{\scriptscriptstyle L^*} q$ iff for any valuation $e$, $e(q) \geq 0$.
Generally, $q_1,\cdots, q_n \vdash_{\scriptscriptstyle L^*} q$ iff for any valuation $e$, $e(q_1)\geq 0 \& \cdots \& e(q_n) \geq 0 \Rightarrow e(q)\geq 0 $.
Hence, from the term equivalence between MV*-algebras and Wajsberg* algebras, we can get that $q_1,\cdots, q_n \vdash_{\scriptscriptstyle L^*} q$ iff for any valuation $e$, $e(q_1)= (c_1 \to 1)\to 1 \& \cdots \& e(q_n)=(c_n \to 1)\to 1 \Rightarrow e(q)=(c\to 1)\to 1 $, where $c_1, \cdots, c_n,c$ are in Wajsberg* algebra.

\begin{defn}\cite{9}
Let $\textbf{A} = \langle A; \oplus, {-}, ^{+}, ^{-}, 0, 1 \rangle$ be an algebra of type $\langle 2, 1, 1, 1, 0, 0 \rangle$. If the following conditions are satisfied for any $x, y, z \in A$,

(QMV*1) $x\oplus y=y\oplus x$,

(QMV*2) $(1\oplus x)\oplus(y\oplus(1\oplus z))=((1\oplus x)\oplus y)\oplus(1\oplus z)$,

(QMV*3) $(x\oplus 1)\oplus 1=1$,

(QMV*4) $(x\oplus y)\oplus 0=x\oplus y$,

(QMV*5) $x^{+}\oplus 0 = (x\oplus 0)^{+}=1 \oplus(-1 \oplus x)$,

\hspace{1.6cm} $x^{-}\oplus 0=(x\oplus 0)^{-} = -1 \oplus(1 \oplus x)$,

(QMV*6) $x\oplus y=(x^{+}\oplus y^{+})\oplus (x^{-}\oplus y^{-})$,

(QMV*7) $0=-0$,

(QMV*8) $x\oplus (-x)=0$,

(QMV*9) $-(x\oplus y)=(-x)\oplus (-y)$,

(QMV*10) $-(-x)=x$,

(QMV*11) $(-x\oplus(x\oplus y))^{+}=-x^{+}\oplus (x^{+}\oplus y^{+})$,

(QMV*12) $x\vee y=y\vee x$,

(QMV*13) $x\vee (y\vee z)=(x\vee y)\vee z$,

(QMV*14) $x\oplus (y\vee z)=(x\oplus y)\vee(x\oplus z)$,

in which ones define $x\vee y = (x^{+}\oplus(-x^{+} \oplus y^{+})^{+})\oplus(x^{-}\oplus (-x^{-}\oplus y^{-})^{+})$, then $\textbf{A}=\langle A;\oplus,{-},^{+},^{-},0,1\rangle$ is called a \emph{quasi-MV* algebra}.
\end{defn}

The variety of quasi-MV* algebras is denote by $\mathbb{QMV^\ast}$. In any quasi-MV* algebra, we consider that the operations $^+$ and $^-$ (which have the same priority) have priority to operations $\oplus$ and $-$, the operation $-$ has priority to the operation $\oplus$.

Let $\textbf{A} = \langle A; \oplus, {-}, ^{+}, ^{-}, 0, 1 \rangle$ be a quasi-MV* algebra. For any $x,y \in A$, we define a relation $x \leq y$ iff $x\vee y = y\oplus 0$. Then the relation $\leq$ is quasi-ordering. Moreover, we denote $R(A)=\{ x\in A\mid x\oplus 0=x \}$, then $\langle R(A);\oplus,-,0,1 \rangle$ is an MV*-algebra \cite{9}.

\begin{defn}\cite{8}
Let $\textbf{W}=\langle W;\to,\neg,^{+},^{-},1\rangle$ be an algebra of type $\langle2,1,1,1,0\rangle$. If the following conditions are satisfied for any ${x,y,z}\in W$,

(QW*1) $x\to y=(\neg y)\to (\neg x)$,

(QW*2) $(x\to 1)\to((y\to 1)\to z)=(y\to 1)\to((x\to 1)\to z)$,

(QW*3) $(1\to x)\to 1=1$,

(QW*4) $(z\to z)\to (x\to y)=x\to y$,

(QW*5) $(1\to 1)\to x^{+}=((1\to 1)\to x)^{+}=(x\to 1)\to 1$ and

\hspace{1.4cm} $(1\to 1)\to x^{-}=((1\to 1)\to x)^{-}=(x\to \neg 1)\to \neg 1$,

(QW*6) $x\to y=(y^{+}\to x^{-})\to (x^{+}\to y^{-})$,

(QW*7) $\neg(x\to y)= y\to x$,

(QW*8) $\neg\neg x=x$,

(QW*9) $(x\to (\neg x\to y))^{+}=x^{+}\to (\neg x^{+}\to y^{+})$,

(QW*10) $x\vee y=y\vee x$,

(QW*11) $x\vee (y\vee z)=(x\vee y)\vee z$,

(QW*12) $x\to (y\vee z)=(x\to y)\vee (x\to z)$,

\noindent in which ones define $x\vee y = ((x^{+}\to y^{+})^{+}\to(\neg x)^{-}) \to((y^{-}\to x^{-})^{-}\to x^{-})$, then $\textbf{W}=\langle W;\to,\neg,^{+},^{-},1\rangle$ is called a \emph{quasi-Wajsberg* algebra}.
\end{defn}

The variety of quasi-Wajsberg* algebras is denoted by $\mathbb{QW^\ast}$.
In any quasi-Wajsberg* algebra, we consider that the operations $^+$ and $^-$ (which have the same priority) have priority to operations $\to$ and $\neg$, the operation $\neg$ has priority to the operation $\to$.

\begin{prop}\label{lox1}
Let $\textbf{W}=\langle W;\to,\neg,^{+},^{-},1\rangle$ be a quasi-Wajsberg* algebra. Then for any $x,y,z\in W$, we have

\emph{(1)} $\neg x\to y=\neg y\to x$ and $x\to \neg y=y\to\neg x$,

\emph{(2)} $x\to x = y\to y$,

\emph{(3)} $(z\to z)\to \neg(x\to y)= \neg(x\to y)$.
\end{prop}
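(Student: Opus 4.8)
The plan is to prove the three identities directly from the axioms (QW*1)--(QW*12) and from the definitions $x^+ = (x\to 1)\to 1$, $x^- = (x\to \neg 1)\to \neg 1$, using (QW*7) and (QW*8) as the principal tools. Throughout I write $\top_z$ for $z\to z$ so as to keep track of the ``identity-like'' element appearing in (QW*4).

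For part (1), the natural route is to apply the involution axioms. To prove $\neg x\to y=\neg y\to x$, I would start from the contraposition axiom (QW*1) in the form $\neg x\to y=(\neg y)\to(\neg\neg x)$ and then collapse the double negation using (QW*8) to obtain $(\neg y)\to x$. The second identity $x\to\neg y=y\to\neg x$ is entirely symmetric: apply (QW*1) to get $x\to\neg y=(\neg\neg y)\to(\neg x)$ and again simplify $\neg\neg y$ to $y$ via (QW*8). This step should be short and I expect no obstacle here.

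For part (2), the idea is to exploit (QW*4), which says $(z\to z)\to(x\to y)=x\to y$ for all $x,y,z$, together with the neutrality this forces on the elements $z\to z$. The plan is to show that every element of the form $z\to z$ acts as a left identity on the whole algebra, or at least that any two such elements are forced to coincide. Concretely, I would try to write $x\to x$ itself as something of the form $u\to v$ (for instance $x\to x$ is already in that shape), then feed it into (QW*4) with two different choices of the ``$z$'' variable; comparing $(x\to x)\to(x\to x)$ computed two ways should pin down $x\to x=y\to y$. An alternative, perhaps cleaner, approach is to use (QW*1) to rewrite $x\to x=(\neg x)\to(\neg x)$ and then connect $\neg x\to\neg x$ to $y\to y$ through (QW*4). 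I expect this to be the main obstacle: (QW*4) only guarantees that $z\to z$ is a left identity for elements that are themselves implications, so I will need to confirm that the relevant intermediate expressions really are implications before I am entitled to cancel, and the bookkeeping of which variable plays which role in (QW*4) is where an error is most likely to creep in.

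For part (3), the goal $(z\to z)\to\neg(x\to y)=\neg(x\to y)$ follows once I recognize $\neg(x\to y)$ as an implication. By (QW*7) we have $\neg(x\to y)=y\to x$, which is manifestly of the form $u\to v$. Substituting this into (QW*4), that is, taking the instance $(z\to z)\to(y\to x)=y\to x$, and then rewriting $y\to x$ back as $\neg(x\to y)$ via (QW*7) on both sides, yields the claim immediately. Thus part (3) is essentially a one-line consequence of (QW*4) and (QW*7) once part of the notational groundwork from the earlier parts is in place.
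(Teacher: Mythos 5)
Your parts (1) and (3) are correct. Part (3) is exactly the paper's own argument: rewrite $\neg(x\to y)$ as $y\to x$ by (QW*7), strip the prefix $(z\to z)\to$ by (QW*4), and rewrite back. Part (1) via (QW*1) plus (QW*8) is also correct, and is in fact more informative than the paper, which simply cites Proposition 3.1 of \cite{8} for parts (1) and (2) without giving an argument.

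Part (2), however, has a genuine gap, and it is precisely at the spot you flagged as the likely obstacle. Neither of your two concrete suggestions closes the argument. Comparing $(x\to x)\to(x\to x)$ ``computed two ways'' gives $x\to x$ both times, since the only applicable instance of (QW*4) has $z:=x$; no relation between $x\to x$ and $y\to y$ is produced. Your alternative, rewriting $x\to x=(\neg x)\to(\neg x)$ by (QW*1) and then applying (QW*4), only yields $(y\to y)\to(x\to x)=x\to x$, which is again just an instance of (QW*4). The pair of facts $(y\to y)\to(x\to x)=x\to x$ and $(x\to x)\to(y\to y)=y\to y$ does not by itself force $x\to x=y\to y$; you need a way to identify the two left-hand sides. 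The missing idea is the fixed-point property of negation on these elements: (QW*7) with $y:=x$ gives $\neg(x\to x)=x\to x$. With that in hand, contraposition (QW*1) yields $(x\to x)\to(y\to y)=\neg(y\to y)\to\neg(x\to x)=(y\to y)\to(x\to x)$, and now applying (QW*4) to each side of this single equation gives $y\to y$ on the left and $x\to x$ on the right, hence $x\to x=y\to y$. So the object to compute two ways is $(x\to x)\to(y\to y)$, and the computation needs (QW*7) and (QW*1) in addition to (QW*4), ingredients your sketch for part (2) does not combine.
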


\begin{proof}
(1) and (2) follow from Proposition 3.1 in \cite{8}, we only need to prove (3). For any $x,y,z\in W$, we have $(z\to z)\to \neg(x\to y)= (z\to z)\to (y\to x)=y\to x=\neg(x\to y)$ by (QW*7) and (QW*4).
\end{proof}

Let $\textbf{W}=\langle W;\to,\neg,^{+},^{-},1\rangle$ be a quasi-Wajsberg* algebra and denote $\textbf{0}=1\to 1$. Then by Proposition \ref{lox1}(2), we have that $\textbf{0} = x\to x$ for any $x \in W$. Moreover, we denote $R(W)=\{ x\in W\mid \textbf{0} \to x = x \}$, then $\langle R(W); \to, \neg,1 \rangle$ is a Wajsberg* algebra.

\begin{defn}\cite{8}
Let $\textbf{W}=\langle W;\to,\neg,^{+},^{-},1\rangle$ be a quasi-Wajsberg* algebra. Then $\textbf{W}$ is called \emph{flat}, if it satisfies the equation $\mathbf{0}= 1$.
\end{defn}

\begin{prop}\emph{\cite{8}}\label{corr1}
Let $\mathbf{A}=\langle A;\oplus,{-},^{+},^{-},0,1\rangle$ be a quasi-MV* algebra. If for any $x,y\in A$, we define $x\to y=-x\oplus y$ and $\neg x=-x$, then $f(\mathbf{A})=\langle A;\to,\neg,^{+},^{-},1\rangle$ is a quasi-Wajsberg* algebra, where $x\vee y = ((x^{+}\to y^{+})^{+}\to(\neg x)^{-}) \to((y^{-}\to x^{-})^{-}\to x^{-})$.
\end{prop}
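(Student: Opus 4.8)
The plan is to verify directly that the translated operations satisfy each of the twelve defining identities (QW*1)--(QW*12), reading every quasi-Wajsberg* term back into $\mathbf{A}$ through the dictionary $x\to y=-x\oplus y$, $\neg x=-x$, with $^+,^-$ carried over unchanged. The first thing I would record is $\mathbf{0}:=1\to 1=-1\oplus 1=0$, by (QMV*1) and (QMV*8). With this, a block of axioms is immediate: (QW*1) is commutativity after $-(-y)=y$, since $(\neg y)\to(\neg x)=y\oplus(-x)=-x\oplus y$; (QW*3) is $(1\to x)\to 1=(1\oplus -x)\oplus 1=1$ by (QMV*1) and (QMV*3); (QW*4) is $\mathbf{0}\to(x\to y)=0\oplus(-x\oplus y)=-x\oplus y$ by (QMV*7),(QMV*4),(QMV*1); (QW*7) is $\neg(x\to y)=-(-x\oplus y)=x\oplus(-y)=y\to x$ by (QMV*9),(QMV*10); and (QW*8) is just (QMV*10).

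The two preliminary facts that carry the weight are (i) the behaviour of $^+,^-$ under negation, $(-x)^+=-(x^-)$ and $(-x)^-=-(x^+)$, and (ii) that $\oplus$ depends on its arguments only modulo $\oplus 0$. For (i) I would compute $\oplus 0$ of both sides via (QMV*5) and the De Morgan law (QMV*9): $(-x)^+\oplus 0=1\oplus(-1\oplus(-x))=-(-1\oplus(1\oplus x))=-(x^-\oplus 0)=-(x^-)\oplus 0$, and symmetrically for $^-$. Fact (ii), namely $a\oplus 0=b\oplus 0\Rightarrow a\oplus c=b\oplus c$, is a standard regularity property of quasi-MV* algebras coming from (QMV*4) and (QMV*2); it is exactly what lets (i) be applied freely inside $\oplus$-contexts, which is the only way it is ever needed.

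For the middle-weight axioms: (QW*9) is literally (QMV*11) once $\neg x=-x$ and $-(-x)=x$ are unwound, so that both sides collapse to $(-x\oplus(x\oplus y))^+$ and $-x^+\oplus(x^+\oplus y^+)$. (QW*5) matches (QMV*5) directly, e.g. $(x\to 1)\to 1=(x\oplus -1)\oplus 1=1\oplus(-1\oplus x)=x^+\oplus 0$, with the $x^-$-clause symmetric via $\neg 1=-1$. (QW*6) follows from (QMV*6): the right-hand side translates to $(x^-\to y^+)\oplus(x^+\to y^-)$ because $-(y^+\to x^-)=x^-\to y^+$, and applying (QMV*6) to $-x\oplus y$ together with the sign rules (i) identifies this with $-x\oplus y=x\to y$. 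For (QW*2) I would first push the outer negation inward using (QMV*9) to bring every subterm into the shape $1\oplus(\cdot)$, reducing the goal to $(1\oplus a)\oplus((1\oplus b)\oplus c)=(1\oplus b)\oplus((1\oplus a)\oplus c)$; this is precisely the restricted associativity (QMV*2) combined with (QMV*1), and since $-$ is an involution the original identity then follows.

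The crux, which I expect to be the main obstacle, is that the quasi-Wajsberg* join $x\vee y=((x^+\to y^+)^+\to(\neg x)^-)\to((y^-\to x^-)^-\to x^-)$ must translate exactly to the quasi-MV* join $x\vee y=(x^+\oplus(-x^+\oplus y^+)^+)\oplus(x^-\oplus(-x^-\oplus y^-)^+)$. Writing the Wajsberg join as $F_1\to F_2=-F_1\oplus F_2$, I would show $-F_1=x^+\oplus(-x^+\oplus y^+)^+$, the first MV summand, using $(\neg x)^-=(-x)^-$ and $-((-x)^-)=x^+$ from (i); and $F_2=x^-\oplus(-x^-\oplus y^-)^+$, the second MV summand, by applying $-(w^-)=(-w)^+$ to $w=y^-\to x^-$ and rearranging with (QMV*9) and (QMV*1). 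Once the two joins are shown to coincide, (QW*10), (QW*11), (QW*12) drop out of (QMV*12), (QMV*13), (QMV*14) respectively, the last by applying (QMV*14) with $-x$ in place of $x$. This $\vee$-coincidence is the hard part: it is the one place where the sign rules, the De Morgan law, commutativity, and the modulo-$\oplus 0$ regularity must all be orchestrated at once, and the bookkeeping of the nested $^+/^-$ superscripts under negation is where an error is most likely to creep in.
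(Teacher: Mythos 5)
First, a point of orientation: the paper does not prove this proposition at all --- it is imported verbatim from \cite{8}, so there is no in-paper argument to compare against, and your blind verification has to stand on its own. On its own terms your plan is the right one: translate each of (QW*1)--(QW*12) through $x\to y=-x\oplus y$, $\neg x=-x$ and discharge it from (QMV*1)--(QMV*14). The easy block (QW*1), (QW*3), (QW*4), (QW*7), (QW*8), the identifications of (QW*9) with (QMV*11) and (QW*5) with (QMV*5), the De Morgan trick for (QW*2), and the two-summand matching of the joins all check out when carried through. You are also right --- and this is not a convenience but a necessity --- that the sign rules $(-x)^{+}=-(x^{-})$, $(-x)^{-}=-(x^{+})$ can only be established modulo $\oplus\, 0$: as literal identities they are \emph{false} in general quasi-MV* algebras. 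In a flat algebra every sum equals $0$, so the axioms barely constrain $^{+}$ and $^{-}$ at all; on $\{0,a,b\}$ with $-a=b$, all sums $0$, $a^{+}=a$ and $b^{+}=b^{-}=a^{-}=0$ one gets a quasi-MV* algebra with $(-b)^{+}=a\neq 0=-(b^{-})$. So your discipline of applying the sign rules only inside $\oplus$-contexts is essential.

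The genuine gap is your fact (ii), the regularity property $a\oplus 0=b\oplus 0\Rightarrow a\oplus c=b\oplus c$, which you assert comes ``from (QMV*4) and (QMV*2).'' It does not. (QMV*2) is associativity restricted to summands of the form $1\oplus(\cdot)$, and one can build commutative structures satisfying (QMV*1), (QMV*2) and (QMV*4) in which the implication fails: take $\{0,1,a,b\}$ with $u\oplus 1=1$ for all $u$, $u\oplus 0=\pi(u)$ where $\pi(a)=\pi(b)=0$, $\pi(0)=0$, $\pi(1)=1$, and $a\oplus a=b\oplus b=0$, $a\oplus b=1$; then $a\oplus 0=b\oplus 0$ but $a\oplus a\neq b\oplus a$. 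Direct equational derivations also run into a regress, because (QMV*5) pins down only $x^{+}\oplus 0$, never $x^{+}$ itself. Since every single application of your fact (i) --- in (QW*6) and in the crucial $\vee$-translation --- is funneled through (ii), the whole proof rests on this unproved step. The fact is true, but the clean justification uses a different idea: by Proposition \ref{01}, the congruences $\mu$ and $\tau$ satisfy $\mu\cap\tau=\Delta$, so every quasi-MV* algebra embeds into the product of an MV*-algebra and a flat quasi-MV* algebra; the quasi-identity (ii) holds trivially in both factors (in the first because $a\oplus 0=a$, in the second because all sums equal $0$), and quasi-identities pass to products and subalgebras. With (ii) repaired in this way (or cited from the arithmetic of quasi-MV* algebras in \cite{9}), the rest of your argument is sound.
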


\begin{prop}\emph{\cite{8}}\label{corr2}
Let $\mathbf{W}=\langle W;\to,\neg,^{+},^{-},1\rangle$ be a quasi-Wajsberg* algebra. If for any $x,y\in W$, we define $0=x\to x$, $x\oplus y=\neg x\to y$ and $-x=\neg x$, then $g(\mathbf{W})=\langle W;\oplus,{-},^{+},^{-},0,1\rangle$ is a quasi-MV* algebra, where $x\vee y = (x^{+}\oplus(-x^{+} \oplus y^{+})^{+})\oplus(x^{-}\oplus (-x^{-}\oplus y^{-})^{+})$.
\end{prop}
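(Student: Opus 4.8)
The plan is to verify directly that $g(\mathbf{W})$ satisfies each of the fourteen quasi-MV* axioms (QMV*1)–(QMV*14), reading every occurrence of $\oplus$, $-$, $0$ through the definitions $x\oplus y=\neg x\to y$, $-x=\neg x$, $0=x\to x$. First I would set up a short dictionary. By Proposition \ref{lox1}(2) the constant $0=x\to x$ is well defined and equals $\mathbf{0}=1\to1$; by (QW*7) we get $-0=\neg\mathbf{0}=\mathbf{0}$; and since $a\oplus 0=\neg a\to\mathbf{0}=\neg\mathbf{0}\to a=\mathbf{0}\to a$ (using Proposition \ref{lox1}(1) and $\neg\mathbf{0}=\mathbf{0}$), the quasi-MV* operation ``$\oplus\,0$'' corresponds to prefixing ``$\mathbf{0}\to$''. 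Finally $a\to b=\neg a\oplus b=(-a)\oplus b$, which is the inverse of the translation used in Proposition \ref{corr1}.

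With this dictionary most axioms reduce to a single quasi-Wajsberg* axiom after a short rewrite. Concretely I expect the following correspondences: (QMV*1) is Proposition \ref{lox1}(1); (QMV*3) is (QW*3) together with (QW*7); (QMV*4) is (QW*4) via the identity $a\oplus 0=\mathbf{0}\to a$; (QMV*7) is (QW*7); (QMV*8) is the very definition of $0$; (QMV*9) is Proposition \ref{lox1}(1) with (QW*7); (QMV*10) is (QW*8); and (QMV*11) is exactly (QW*9), since $-x\oplus(x\oplus y)$ translates to $x\to(\neg x\to y)$. The axioms (QMV*2) and (QMV*5) reduce to (QW*2) and (QW*5) respectively after a few applications of Proposition \ref{lox1}(1) to move the negations onto the appropriate variables. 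None of these presents any difficulty.

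The real work is concentrated in the axioms that couple $^{+},^{-}$ with $\neg$ and in those governing $\vee$. For (QMV*6) I would apply (QW*6) to $\neg x\to y$ and compare with the translated right-hand side $(y^{+}\to\neg x^{+})\to(\neg x^{-}\to y^{-})$; the match requires the compatibility identities $(\neg x)^{+}=\neg(x^{-})$ and $(\neg x)^{-}=\neg(x^{+})$, which I would isolate as a preliminary lemma and derive from (QW*5) and Proposition \ref{lox1}(1). Here one must be careful because $^{+},^{-}$ need not be regular; however both sides of (QMV*6) are implications in the $\to$-language and hence are fixed by $\mathbf{0}\to$, so it is enough to establish the lemma in its $\mathbf{0}\to$-prefixed form and then absorb the prefix by (QW*4). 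The remaining axioms (QMV*12)–(QMV*14) are the join axioms; they will transfer from (QW*10)–(QW*12) once I show that the join defined in $g(\mathbf{W})$ by the quasi-MV* formula $x\vee y=(x^{+}\oplus(-x^{+}\oplus y^{+})^{+})\oplus(x^{-}\oplus(-x^{-}\oplus y^{-})^{+})$ agrees with the quasi-Wajsberg* join $((x^{+}\to y^{+})^{+}\to(\neg x)^{-})\to((y^{-}\to x^{-})^{-}\to x^{-})$ of $\mathbf{W}$.

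I expect this identification of the two $\vee$-definitions to be the main obstacle. Rewriting the quasi-MV* expression in the $\to,\neg$ language (using $-a^{+}\oplus b^{+}=a^{+}\to b^{+}$, (QW*7), and the compatibility lemma above) and reconciling it with the quasi-Wajsberg* expression is a long computation whose success depends entirely on the $\neg$–$^{\pm}$ lemma and on (QW*6), and the delicate point is again the regularity bookkeeping, namely which subterms are fixed by $\mathbf{0}\to$ and which are not. Once the two joins coincide, (QMV*12)–(QMV*14) follow at once from (QW*10)–(QW*12), and together with the routine axioms this shows that $g(\mathbf{W})$ is a quasi-MV* algebra.
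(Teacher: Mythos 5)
The paper never proves this proposition; it is imported wholesale from \cite{8}, so your plan has to stand on its own. Its architecture is the natural (and surely the intended) one: push each of (QMV*1)--(QMV*14) through the dictionary $x\oplus y=\neg x\to y$, $-x=\neg x$, $a\oplus 0=\mathbf{0}\to a$ and match it against a quasi-Wajsberg* axiom. Your routine correspondences are correct as stated, including the exact match (QMV*11) $=$ (QW*9), and you have correctly located the crux in (QMV*6) and the two $\vee$-definitions.

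The gap is in your repair of that crux, and it is genuine. First, the unprefixed compatibility identities $(\neg x)^{+}=\neg(x^{-})$ and $(\neg x)^{-}=\neg(x^{+})$ are not just unproved but \emph{false} in general quasi-Wajsberg* algebras: take $W=\{\mathbf{0},a,b\}$ with $x\to y=\mathbf{0}$ for all $x,y$, $1=\mathbf{0}$, $\neg$ and $^{-}$ the identity maps, and $^{+}$ fixing $\mathbf{0}$ and swapping $a,b$. Every term headed by $\to$ (hence every $\vee$-term, and both sides of (QW*1)--(QW*7), (QW*9)--(QW*12)) evaluates to $\mathbf{0}$, and $\mathbf{0}^{\pm}=\mathbf{0}$, so all of (QW*1)--(QW*12) hold; yet $(\neg a)^{+}=b\neq a=\neg(a^{-})$. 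So indeed only the $\mathbf{0}\to$-prefixed identities are provable, as you suspected. Second---and this is the step that would fail---those prefixed identities cannot be deployed by ``absorbing the prefix by (QW*4)''. (QW*4) strips a prefix $\mathbf{0}\to$ only from the \emph{top} of a term that is an implication, whereas in (QMV*6) the terms to be interchanged sit at \emph{inner} positions, as arguments of $\to$: one must pass from $(y^{+}\to(\neg x)^{-})\to((\neg x)^{+}\to y^{-})$, which is what (QW*6) gives, to $(y^{+}\to\neg(x^{+}))\to(\neg(x^{-})\to y^{-})$, which is the translated right-hand side of (QMV*6). Knowing that both whole sides are fixed by $\mathbf{0}\to$, together with $\mathbf{0}\to(\neg x)^{\pm}=\mathbf{0}\to\neg(x^{\mp})$, licenses no such replacement, because no axiom you invoke says that $u\to v$ is unchanged when an argument $u$ is replaced by $\mathbf{0}\to u$. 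The missing key lemma is exactly $u\to v=(\mathbf{0}\to u)\to(\mathbf{0}\to v)$, i.e.\ that $\to$ depends on its arguments only through their regularizations. This identity is true in every quasi-Wajsberg* algebra (it is trivial in Wajsberg* algebras by (W*4), holds in flat ones because there $\to$ is constantly $\mathbf{0}$, and transfers to all of them via the subdirect embedding into $W/\mu\times W/\tau$, the quasi-Wajsberg* analogue of Proposition \ref{01} proved independently in \cite{8}), but it needs its own derivation and appears nowhere in your plan. With it, your treatment of (QMV*6), of the $\vee$-reconciliation, and also of (QMV*2)/(QMV*5) goes through; without it, the verification stalls at precisely the axioms you singled out.
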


\begin{thm}\emph{\cite{8}}\label{T1}
For any quasi-MV* algebra $\textbf{A}= \langle A; \oplus,-,^+,^-,0,1 \rangle$ and quasi-Wajsberg* algebra $\textbf{W}= \langle W; \to, \neg, ^{+},^{-},1 \rangle$, we have

\emph{(1)} $gf(\textbf{A})=\textbf{A}$,

\emph{(2)} $fg(\textbf{W})=\textbf{W}$.

So $f$ and $g$ are mutually inverse correspondence.
\end{thm}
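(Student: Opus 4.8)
The plan is to prove Theorem~\ref{T1} by direct computation, verifying that the two constructions $f$ and $g$ from Propositions~\ref{corr1} and~\ref{corr2} are mutually inverse at the level of the underlying operations. The key observation is that $f$ and $g$ act on the carrier set as the identity (neither changes the universe $A=W$); they only reinterpret the operations. So for each part it suffices to check that applying one construction after the other reproduces the original operations, constant, and unary maps. Since the operations $^{+}$, $^{-}$ are carried along unchanged by both $f$ and $g$ (they are part of the signature that is simply retained), the real content lies in the binary operation, the negation, and the nullary constants.

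For part~(1), I would start from a quasi-MV* algebra $\mathbf{A}=\langle A;\oplus,-,^{+},^{-},0,1\rangle$. Applying $f$ gives $f(\mathbf{A})=\langle A;\to,\neg,^{+},^{-},1\rangle$ with $x\to y=-x\oplus y$ and $\neg x=-x$. Applying $g$ to $f(\mathbf{A})$ then produces operations $\oplus'$, $-'$, $0'$ defined via the $\to,\neg$ just introduced: namely $0'=x\to x=-x\oplus x$, $x\oplus' y=\neg x\to y=-(-x)\oplus y$, and $-'x=\neg x=-x$. I would verify $-'x=-x$ immediately, then $-(-x)=x$ by (QMV*10) so that $x\oplus' y=x\oplus y$, and finally $0'=-x\oplus x=x\oplus(-x)=0$ using (QMV*1) and (QMV*8). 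The nullary $1$ is untouched. This establishes $gf(\mathbf{A})=\mathbf{A}$.

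For part~(2), I would reverse the roles, starting from a quasi-Wajsberg* algebra $\mathbf{W}$ and applying $g$ then $f$. Here $g(\mathbf{W})$ has $x\oplus y=\neg x\to y$, $-x=\neg x$, $0=x\to x$. Applying $f$ recovers $\to',\neg',1$ with $\neg' x=-x=\neg x$ (using (QW*8), $\neg\neg x=x$, where needed) and $x\to' y=-x\oplus y=\neg(\neg x)\to y=x\to y$ after simplifying with (QW*8). I would confirm the constant $1$ is preserved directly. The one point requiring a little care is that $g(\mathbf{W})$ contains the nullary constant $0$ which is derived, not primitive, in $\mathbf{W}$; but since $f$ does not use $0$ in reconstructing $\to',\neg'$, this causes no difficulty.

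The main obstacle, if any, is purely bookkeeping: ensuring that the derived constant $0=x\to x$ in $g$ is well-defined (independent of $x$), which is exactly guaranteed by Proposition~\ref{lox1}(2), and that the negation identities $\neg\neg x=x$ and $-(-x)=x$ are invoked correctly to untangle the doubled negations appearing in $x\oplus' y$ and $x\to' y$. No deep structural argument is needed; the result follows once these substitutions are carried out and the relevant axioms ((QMV*8), (QMV*10), (QW*8), and Proposition~\ref{lox1}(2)) are applied. I would present the computation compactly, checking operation by operation that the composite equals the identity functor on each variety.
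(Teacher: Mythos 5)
Your proposal is correct. The paper itself does not prove Theorem~\ref{T1} (it is quoted from \cite{8}), and your operation-by-operation verification is exactly the standard argument: since $f$ and $g$ fix the carrier and retain $^{+}$, $^{-}$ and $1$, it suffices to check $\oplus$, $-$, $0$ and $\to$, $\neg$, which you do correctly via (QMV*1), (QMV*8), (QMV*10) for part (1), and (QW*8) together with the well-definedness of $0=x\to x$ from Proposition~\ref{lox1}(2) for part (2).
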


For convenience, we abbreviate a quasi-MV* algebra $\textbf{A}= \langle A; \oplus,-,^+,^-,0,1 \rangle$ as $\textbf{A}$ and a quasi-Wajsberg* algebra $\textbf{W}= \langle W; \to, \neg, ^{+},^{-},1 \rangle$ as $\textbf{W}$. Based on the previous equivalence theorem, we can restart the known algebraic results for one algebra in terms of the other without providing any additional proof. Hence in the following we list some properties of quasi-MV* algebras while the similar results of quasi-Wajsberg* algebras had been proved in \cite{8}.

Let \textbf{A} be a quasi-MV* algebra and $\theta$ be a congruence on \textbf{A}. For any $x \in A$, the equivalence class of $x$ with respect to $\theta$ is denoted by $x/\theta = \{ y\in A \mid \langle x,y \rangle \in \theta \}$ and the set of all equivalence classes of elements in $A$ is denoted by $A/\theta$. For any $x/\theta, y/\theta \in A/\theta$, we define the operations on $ A/\theta$ as follows

$(x/\theta) \oplus_{\scriptscriptstyle A/\theta} (y/\theta) = (x \oplus y)/\theta$,

 $(x/\theta)^{+_{_{A/\theta}}}= x^+ /\theta$,

 $(x/\theta)^{-_{_{A/\theta}}}= x^- /\theta$,

 $-_{\scriptscriptstyle A/\theta} (x/\theta) = (-x)/\theta$.

\begin{prop}
Let \textbf{A} be a quasi-MV* algebra and $\theta$ be a congruence on \textbf{A}. Then $\textbf{A}/\theta = \langle A/\theta; \oplus_{\scriptscriptstyle A/\theta}, -_{\scriptscriptstyle A/\theta},  ^{+_{_{A/\theta}}},  ^{-_{_{A/\theta}}} , 0/\theta, 1/\theta\rangle$ is a quasi-MV* algebra.
\end{prop}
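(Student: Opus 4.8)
The plan is to regard this as an instance of the standard principle that quotients stay inside an equational class: since $\mathbb{QMV^\ast}$ is defined by the identities (QMV*1)--(QMV*14), it suffices to check that the operations on $A/\theta$ are well defined and then that each defining identity survives passage to the quotient. I would therefore split the argument into a well-definedness step and an axiom-checking step.

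For well-definedness, the whole point is that $\theta$ is a congruence, that is, $\theta$ is an equivalence relation compatible with every fundamental operation. Thus whenever $\langle x,x'\rangle\in\theta$ and $\langle y,y'\rangle\in\theta$ we have $\langle x\oplus y, x'\oplus y'\rangle\in\theta$, $\langle -x,-x'\rangle\in\theta$, $\langle x^+,(x')^+\rangle\in\theta$ and $\langle x^-,(x')^-\rangle\in\theta$. Hence the values $(x\oplus y)/\theta$, $(-x)/\theta$, $x^+/\theta$ and $x^-/\theta$ do not depend on the chosen representatives, so the four operations $\oplus_{\scriptscriptstyle A/\theta}$, $-_{\scriptscriptstyle A/\theta}$, ${}^{+_{_{A/\theta}}}$, ${}^{-_{_{A/\theta}}}$ are genuine functions, and the constants $0/\theta$, $1/\theta$ are unambiguous. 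It follows that the canonical surjection $\pi\colon A\to A/\theta$, $x\mapsto x/\theta$, is a homomorphism.

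Before checking the axioms I would record one auxiliary fact, since (QMV*12)--(QMV*14) involve the derived operation $\vee$: the $\vee$ computed in $\textbf{A}/\theta$ equals the $\pi$-image of the $\vee$ computed in $\textbf{A}$, i.e. $(x/\theta)\vee(y/\theta)=(x\vee y)/\theta$. Because $\vee$ is by definition a term built from $\oplus$, $-$, ${}^{+}$, ${}^{-}$ and the constants, this is a special case of the general fact that a homomorphism commutes with every term operation, proved by an easy induction on the structure of the term; it is automatic once $\pi$ is known to be a homomorphism.

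With these two points settled, the verification of (QMV*1)--(QMV*14) is purely formal, since each axiom is an identity and $\pi$ is a surjective homomorphism. For instance, (QMV*1) in $\textbf{A}/\theta$ reads $(x/\theta)\oplus_{\scriptscriptstyle A/\theta}(y/\theta)=(x\oplus y)/\theta=(y\oplus x)/\theta=(y/\theta)\oplus_{\scriptscriptstyle A/\theta}(x/\theta)$, the middle equality being (QMV*1) in $\textbf{A}$; every other axiom, including those containing $\vee$ (where one invokes the agreement just noted), collapses in exactly the same manner to its counterpart in $\textbf{A}$. I do not expect any real obstacle: the sole substantive content is the well-definedness of the operations, which is precisely what the congruence hypothesis delivers, and the remaining axiom-by-axiom check is mechanical.
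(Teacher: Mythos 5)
Your proof is correct. The paper states this proposition without any proof at all, treating it as the standard universal-algebra fact that an equational class such as $\mathbb{QMV^\ast}$ is closed under quotients by congruences; your argument --- well-definedness of the operations from congruence compatibility, the canonical surjection $\pi$ being a homomorphism, term operations (in particular the derived $\vee$) commuting with $\pi$, and the identity-by-identity transfer of (QMV*1)--(QMV*14) --- is precisely the standard justification the paper leaves implicit, with the one genuinely non-mechanical point (handling $\vee$ as a term operation) correctly isolated.
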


\begin{prop} \label{01}
Let \textbf{A} be a quasi-MV* algebra. For any $x, y \in A$, we define
\begin{center}
$\langle x, y \rangle \in \mu$ iff  $x\leq y$ and $y\leq x$,
\end{center}
\begin{center}
$\langle x,y \rangle \in \tau$ iff  $x=y$ or $x,y \in R(A)$.
\end{center}
Then we have

\emph{(1)} $\mu$ and $\tau$ are congruences on \textbf{A},

\emph{(2)} $\mu \cap \tau = \Delta$ (i.e., diagonal relation),

\emph{(3)} $\textbf{A}/\mu = \langle A/\mu; \oplus_{\scriptscriptstyle A/\mu}, -_{\scriptscriptstyle A/\mu}, ^{+_{_{A/\mu}}}, ^{-_{_{A/\mu}}}, 0/\mu, 1/\mu \rangle$ is an MV*-algebra,

\emph{(4)} $\textbf{A}/\tau = \langle A/\tau; \oplus_{\scriptscriptstyle A/\tau}, -_{\scriptscriptstyle A/\tau}, ^{+_{_{A/\tau}}}, ^{-_{_{A/\tau}}}, 0/\tau, 1/\tau \rangle$ is a flat quasi-MV* algebra.
\end{prop}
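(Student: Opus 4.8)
The plan is to handle $\mu$ and part~(3) simultaneously by exhibiting $\mu$ as the kernel of a single homomorphism, then to treat $\tau$, part~(4), and part~(2) by hand. Throughout I will use the quasi-order $\leq$ and the set $R(A)=\{x\in A\mid x\oplus 0=x\}$ from the preliminaries, together with the quotient proposition stated just before this one, which guarantees that a quotient by any congruence is again a quasi-MV* algebra.

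For $\mu$, consider the regularization map $\phi\colon A\to R(A)$ given by $\phi(x)=x\oplus 0$. First I would record the absorption identities $(x\oplus 0)\oplus y=x\oplus y=x\oplus(y\oplus 0)$ (derivable from (QMV*4)--(QMV*6), or available from \cite{8} via the term equivalence of Theorem~\ref{T1}); combined with (QMV*5), (QMV*7), (QMV*9) these show that $\phi$ is a homomorphism onto $R(A)$, which is onto since $r=r\oplus 0=\phi(r)$ for $r\in R(A)$. Next I claim $\ker\phi=\mu$. The inclusion $\mu\subseteq\ker\phi$ is immediate: if $x\leq y$ and $y\leq x$ then $x\vee y=y\oplus 0$ and $y\vee x=x\oplus 0$, and since $\vee$ is commutative by (QMV*12) we get $x\oplus 0=y\oplus 0$. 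For the reverse inclusion, assuming $x\oplus 0=y\oplus 0$ I would evaluate $x\vee y$ from its definition: using (QMV*5) and absorption, $-x^{+}\oplus y^{+}=-x^{+}\oplus(y^{+}\oplus 0)=-x^{+}\oplus(x^{+}\oplus 0)=-x^{+}\oplus x^{+}=0$ and likewise $-x^{-}\oplus y^{-}=0$, so that $x\vee y=(x^{+}\oplus 0)\oplus(x^{-}\oplus 0)=x^{+}\oplus x^{-}=x\oplus 0=y\oplus 0$; hence $x\leq y$, and symmetrically $y\leq x$, i.e. $\langle x,y\rangle\in\mu$. Being the kernel of a homomorphism, $\mu$ is a congruence, and $\textbf{A}/\mu\cong R(A)$ is an MV*-algebra; this settles the $\mu$-part of (1) and all of (3).

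For $\tau$, I would first note that $R(A)$ is closed under all the operations: $x\in R(A)$ forces $-x,x^{+},x^{-}\in R(A)$ by (QMV*5), (QMV*7), (QMV*9), while $x\oplus y\in R(A)$ for \emph{all} $x,y$ by (QMV*4); also $0,1\in R(A)$ (for $1$ use (QMV*3), $1=(x\oplus 1)\oplus 1$). That $\tau$ is an equivalence relation is checked directly, the only nontrivial point being transitivity, which uses closure of $R(A)$. For compatibility: given $\langle x,x'\rangle,\langle y,y'\rangle\in\tau$, the values $x\oplus y$ and $x'\oplus y'$ always lie in $R(A)$ and are therefore $\tau$-related, while for the unary operations one argues case-wise (equal inputs give equal outputs, and inputs in $R(A)$ give outputs in $R(A)$). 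Hence $\tau$ is a congruence, completing (1). For (4), since $0,1\in R(A)$ we have $\langle 0,1\rangle\in\tau$, so $0/\tau=1/\tau$ in $\textbf{A}/\tau$; as flatness in the quasi-MV* language is exactly the equation $0=1$, this shows $\textbf{A}/\tau$ is a flat quasi-MV* algebra.

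For (2), suppose $\langle x,y\rangle\in\mu\cap\tau$. If $x\neq y$, then by definition of $\tau$ both $x,y\in R(A)$; on $R(A)$ the restriction of $\leq$ coincides with the order of the MV*-algebra $R(A)$ (there $y\oplus 0=y$, so $x\leq y$ reduces to $x\vee y=y$), which is antisymmetric, and $\langle x,y\rangle\in\mu$ gives $x\leq y\leq x$, forcing $x=y$, a contradiction; therefore $\mu\cap\tau=\Delta$. I expect the main obstacle to be the technical core of the first step: establishing the absorption identities and pushing the $\vee$-computation through in a setting where full associativity is unavailable (only the restricted form (QMV*2)) and $0$ is not a genuine neutral element, so that intermediate terms such as $-x^{+}\oplus(x^{+}\oplus 0)$ must be simplified with care. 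A lighter alternative is to bypass these computations entirely by invoking the corresponding results already proved for quasi-Wajsberg* algebras in \cite{8} and transporting them along the mutually inverse correspondence $f,g$ of Theorem~\ref{T1}; I would give the direct argument above for transparency and fall back on the transport for any identity whose hand derivation becomes tedious.
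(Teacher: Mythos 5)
You take a genuinely different route from the paper, which in fact contains no proof of this proposition at all: it sits in the Preliminary section as a recalled result, covered by the remark immediately preceding it that, thanks to the term equivalence of Theorem \ref{T1}, the corresponding facts already proved for quasi-Wajsberg* algebras in \cite{8} (together with the quasi-MV* results of \cite{9}) can be restated ``without providing any additional proof.'' That transport argument is precisely what you relegate to a fallback in your final sentence; what you offer instead is a self-contained verification. Its centerpiece, realizing $\mu$ as the kernel of the regularization map $\phi(x)=x\oplus 0$ onto $R(A)$, is an efficient device: it gives the congruence property of $\mu$, the isomorphism $\textbf{A}/\mu\cong R(A)$ (hence (3), since the paper already records that $\langle R(A);\oplus,-,0,1\rangle$ is an MV*-algebra), and it would also give part (2) more cheaply than your antisymmetry detour, since $\langle x,y\rangle\in\mu\cap\tau$ with $x\neq y$ forces $x,y\in R(A)$ and then $x=x\oplus 0=y\oplus 0=y$ directly; your handling of $\tau$ and (4) is what any direct proof must do and is fine. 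One caveat deserves emphasis: your parenthetical claim that the absorption identities $(x\oplus 0)\oplus y=x\oplus y=x\oplus(y\oplus 0)$ are ``derivable from (QMV*4)--(QMV*6)'' should not be taken at face value. With only the restricted associativity (QMV*2) available, and with $x^{+}$ possibly non-regular (strongness is exactly what can fail in a general quasi-MV* algebra), the natural manipulation---expanding both sides via (QMV*6) and rewriting with (QMV*5)---merely reproduces the identity to be proved, so a genuine derivation from the axioms is nontrivial. Since everything in your first step (the homomorphism property of $\phi$, the $\vee$-computation, $x^{+}\oplus x^{-}=x\oplus 0$) rests on absorption, and your declared fallback for it is to import it from \cite{8}/\cite{9} via Theorem \ref{T1}, your proof is ``direct'' only modulo the same imported lemma that constitutes the paper's entire justification. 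That is acceptable, and your argument is correct with that understanding, but the dependence should be stated as a citation rather than as a claimed easy derivation.
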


\section{The standard completeness and term equivalence of strong quasi-MV* algebras}

In this section, we introduce the strong quasi-MV* algebras and investigate their related properties. We prove that every strong quasi-MV* algebra can be embedded into the direct product of an MV*-algebra and a flat strong quasi-MV* algebra. Moreover, we show the standard completeness theorems for flat strong quasi-MV* algebras and strong quasi-MV* algebras, respectively.
We also study strong quasi-Wajsberg* algebras as the term equivalence of strong quasi-MV* algebras and discuss the terms in the languages of  Wajsberg* algebras and strong quasi-Wajsberg* algebras.

\begin{defn}\label{sqmv}
A quasi-MV* algebra $\textbf{Q}= \langle Q;\oplus,-,^+,^-,0,1 \rangle$ is called a \emph{strong quasi-MV* algebra}, if it satisfies the equations $x^+ = x^+ \oplus 0$ and $x^- = x^- \oplus 0$ for any $x \in Q$.
\end{defn}

\begin{rem}\label{remark1}
Let $\textbf{Q}= \langle Q;\oplus,-,^+,^-,0,1 \rangle$ be a strong quasi-MV* algebra. Then for any $x \in Q$, we can get that $x^+ = 1\oplus (-1 \oplus x)$ and $x^- = -1 \oplus (1 \oplus x)$ from (QMV*5). So it is seen that similar to MV*-algebra, the operations $^+$ and $^-$ in a strong quasi-MV* algebra can be determined by operations $\oplus$ and $-$.
\end{rem}

\begin{exam}
Let $Q= [-1,1]\times [0,1] \subseteq \mathbb{R}^{2}$. For any $\langle a,b \rangle, \langle c,d \rangle \in Q$, we define the operations on $Q$ as follows:

$\langle a,b \rangle \oplus \langle c,d \rangle = \langle \max\{ -1, \min\{1, a+c \} \} , \frac{1}{2} \rangle$,

$- \langle a,b \rangle = \langle -a, 1-b \rangle$,

$\langle a,b \rangle ^+ = \langle \max\{0,a \}, \frac{1}{2} \rangle$,

$\langle a,b \rangle ^- = \langle \min\{0,a \}, \frac{1}{2} \rangle$,

$\textbf{0}= \langle 0, \frac{1}{2} \rangle$, $\textbf{1}= \langle 1, \frac{1}{2} \rangle$.

Then we can check that $\langle Q; \oplus, -, ^+, ^-, \textbf{0}, \textbf{1} \rangle$ is a quasi-MV* algebra. For any $\langle a,b \rangle \in Q$, we have that $\langle a,b \rangle ^+ \oplus \textbf{0} = \langle \max\{ 0,a \},  \frac{1}{2} \rangle \oplus \langle 0, \frac{1}{2} \rangle = \langle \max\{ 0,a \}, \frac{1}{2} \rangle = \langle a,b \rangle ^+$ and $\langle a,b \rangle ^- \oplus \textbf{0} = \langle \min\{ 0,a \}, \frac{1}{2} \rangle \oplus \langle 0, \frac{1}{2} \rangle = \langle \min\{ 0,a \}, \frac{1}{2} \rangle = \langle a,b \rangle ^-$. Hence $\langle Q; \oplus, -, ^+, ^-, \textbf{0}, \textbf{1} \rangle$ is a strong quasi-MV* algebra. Note that $\langle a,b \rangle \oplus \textbf{0} = \langle a,b \rangle \oplus \langle 0, \frac{1}{2} \rangle = \langle a, \frac{1}{2} \rangle \neq \langle a,b \rangle$ whenever $b \neq \frac{1}{2}$, so $\langle Q; \oplus, -, ^+, ^-, \textbf{0}, \textbf{1} \rangle$ is not an MV*-algebra.
\end{exam}

\begin{exam} Let $S^*= [-1,1]\times [-1,1] \subseteq \mathbb{R}^{2}$. For any $\langle a,b \rangle, \langle c,d \rangle \in S^*$, we define the operations on $S$ as follows:

$\langle a,b \rangle \oplus \langle c,d \rangle = \langle \max\{-1, \min\{ 1, a+c\}\}, 0 \rangle$,

$-\langle a,b \rangle = \langle -a, -b \rangle$,

$\langle a,b \rangle ^+ = \langle \max\{ 0,a \}, 0 \rangle$,

$\langle a,b \rangle ^- = \langle \min\{ 0,a \}, 0 \rangle$,

$\textbf{0}= \langle 0,0 \rangle, \textbf{1}= \langle 1,0 \rangle$.

Then $\textbf{S*} = \langle S^*; \oplus, -, ^+, ^-, \textbf{0}, \textbf{1} \rangle$ is a strong quasi-MV* algebra. We call $\textbf{S*}$ the \emph{square standard strong quasi-MV* algebra}. Especially, we denote $D^* = \{ \langle a,b \rangle \in \mathbb{R}^{2} \mid a^2 + b^2 \leq 1 \}$. Then $\textbf{D*}=\langle D^*; \oplus_{\scriptscriptstyle D^*}, -_{\scriptscriptstyle D^*},$ $^{+_{D^*}}, ^{-_{D^*}}, \textbf{0}, \textbf{1} \rangle$ is the subalgebra of \textbf{S*}, where the operations $\oplus_{\scriptscriptstyle D^*}$, $-_{\scriptscriptstyle D^*}$, $^{+_{D^*}}$ and $^{-_{D^*}}$ are those of $\textbf{S*}$ restricted to $D^*$. We call $\textbf{D*}$ the \emph{disk standard strong quasi-MV* algebra}.
\end{exam}

In the following, we abbreviate a strong quasi-MV* algebra $\textbf{Q} = \langle Q; \oplus, - ^+, ^-, 0,1 \rangle$ as $\textbf{Q}$. The variety of strong quasi-MV* algebras is a subvariety of quasi-MV* algebras and denoted by $\mathbb{SQMV^\ast}$.

\begin{defn}\label{F1}
Let $\textbf{Q}$ be a strong quasi-MV* algebra. Then \textbf{Q} is called \emph{flat}, if it satisfies the equation $0= 1$.
\end{defn}

Below we denote $\textbf{F}$ for the flat strong quasi-MV* algebra and the variety of flat strong quasi-MV* algebras by $\mathbb{FSQMV^\ast}$.

\begin{exam}
Let $\textbf{A} = \langle A; \oplus _{\scriptscriptstyle A}, -_{\scriptscriptstyle A}, ^{+_{_{A}}}, ^{-_{_{A}}}, 0_{\scriptscriptstyle A}, 1_{\scriptscriptstyle A} \rangle$ be a quasi-MV* algebra and let $k\notin A$ if $-_{\scriptscriptstyle A}$ has no fixpoint over $R(A)$, otherwise let $k$ be such a fixpoint. Then the $k$-flattening of \textbf{A} is the structure
$$ \textbf{F}(\textbf{A},k) = \langle A\cup \{ k \}; \oplus _{\scriptscriptstyle F}, -_{\scriptscriptstyle F}, ^{+_{_{F}}}, ^{-_{_{F}}}, \textbf{0}_{\scriptscriptstyle F}, \textbf{1}_{\scriptscriptstyle F} \rangle, $$

where

$ \textbf{0}_{\scriptscriptstyle F} = \textbf{1}_{\scriptscriptstyle F} = k $,

for any $a, b \in A\cup \{k \}$, $a \oplus _{\scriptscriptstyle F} b = k$, $a^{+_{_{F}}} = k$, $a^{-_{_{F}}} = k$,

for any $a \in A - \{k \}$, $-_{_{F}} a = -_{_{A}} a$, $-_{_{F}} k =k$.

\noindent It is easy to see that $\textbf{F}(\textbf{A},k)$ is a flat strong quasi-MV$^\ast$ algebra.
\end{exam}

\begin{exam}
The \emph{standard flat strong quasi-MV* algebra} is the 0-flattening of the standard MV* algebra $\textbf{MV*}_{[-1,1]}$, i.e., the algebra
$$\textbf{F}(\textbf{MV*}_{[-1,1]}, 0) = \langle [-1, 1]; \oplus_{\scriptscriptstyle F}, -_{\scriptscriptstyle F}, ^{+_{_{F}}}, ^{-_{_{F}}}, \textbf{0}, \textbf{1} \rangle,$$

\noindent where for any $a, b\in [-1,1]$, $a \oplus_{\scriptscriptstyle F} b = 0$, $-_{\scriptscriptstyle F} a = -a$, $a^{+_{_{F}}} = 0$, $a^{-_{_{F}}} = 0$, and $\textbf{0}=\textbf{1}=0$.
\end{exam}

\begin{lem}\label{f0}
Let \textbf{F} be a flat strong quasi-MV* algebra. Then for any $x, y \in F$, $x \oplus y = 0$.
\end{lem}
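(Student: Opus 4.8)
The plan is to show that in a flat strong quasi-MV* algebra every binary sum collapses to $0$ by way of the decomposition axiom (QMV*6), after first establishing that adjoining $0$ annihilates any element.

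First I would prove the auxiliary identity $x \oplus 0 = 0$ for every $x \in F$. Since $\textbf{F}$ is flat we have $0 = 1$, so (QMV*3), which reads $(x\oplus 1)\oplus 1 = 1$, becomes $(x \oplus 0) \oplus 0 = 0$. On the other hand, setting $y = 0$ in (QMV*4) gives $(x \oplus 0) \oplus 0 = x \oplus 0$. Comparing the two expressions yields $x \oplus 0 = 0$. In particular, instantiating at $x = 0$ gives $0 \oplus 0 = 0$.

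Next I would invoke the defining equations of a strong quasi-MV* algebra from Definition \ref{sqmv}, namely $x^+ = x^+ \oplus 0$ and $x^- = x^- \oplus 0$. Applying the auxiliary identity to the elements $x^+$ and $x^-$ then forces $x^+ = x^+ \oplus 0 = 0$ and $x^- = x^- \oplus 0 = 0$ for every $x \in F$. Finally, (QMV*6) expresses the sum as $x \oplus y = (x^+ \oplus y^+) \oplus (x^- \oplus y^-)$; substituting $x^+ = y^+ = x^- = y^- = 0$ and using $0 \oplus 0 = 0$ twice collapses the right-hand side to $0$, so $x \oplus y = 0$.

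The argument is purely equational and presents no genuine obstacle; the only point requiring care is the realisation that flatness alone is insufficient and must be combined with the idempotency-type axiom (QMV*4) to obtain $x \oplus 0 = 0$. It is precisely this identity, rather than the equation $0 = 1$ on its own, that trivialises the positive and negative components fed into (QMV*6) and thereby drives the whole sum to $0$.
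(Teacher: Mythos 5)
Your proof is correct: every equational step checks out against the axioms, and since the paper states Lemma \ref{f0} without any proof at all, there is no official argument to conflict with. That said, your route is longer than necessary, and your closing diagnosis is off the mark. Your auxiliary identity already finishes the job: once you know $z \oplus 0 = 0$ for every $z \in F$ (which, as you show, needs only flatness, (QMV*3) and (QMV*4)), apply it to the element $z = x \oplus y$ to get $(x \oplus y) \oplus 0 = 0$, while (QMV*4) gives $(x \oplus y) \oplus 0 = x \oplus y$; hence $x \oplus y = 0$ immediately. The detour through the strongness equations $x^+ = x^+ \oplus 0$, $x^- = x^- \oplus 0$ and the decomposition (QMV*6) is therefore superfluous, and your final claim --- that it is the strongness-plus-(QMV*6) mechanism, rather than flatness with (QMV*3) and (QMV*4), that drives the sum to $0$ --- is not accurate: the conclusion holds in every flat quasi-MV* algebra, strong or not. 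The word ``strong'' appears in the statement of the lemma only because the paper's Definition \ref{F1} introduces flatness for strong quasi-MV* algebras; it does no work in the proof.
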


In order to show the standard completeness result for $\mathbb{SQMV^\ast}$, we first complete the proof of the direct embedding theorem, i.e., every strong quasi-MV* algebra is embeddable into the direct product of an MV*-algebra and a flat strong quasi-MV* algebra.

Given $\textbf{W}=\langle W; \oplus_{\scriptscriptstyle W}, -_{\scriptscriptstyle W}, ^{+_{_ {W}}}, ^{-_{_ {W}}}, 0_{\scriptscriptstyle W}, 1_{\scriptscriptstyle W} \rangle$ and $\textbf{V}=\langle V; \oplus_{\scriptscriptstyle V}, -_{\scriptscriptstyle V}, ^{+_{_ {V}}}, ^{-_{_ {V}}},0_{\scriptscriptstyle V}, 1_{\scriptscriptstyle V} \rangle$ are strong quasi-MV* algebras. Define the operations $\oplus$, $-$, $^+$ and $^-$ as coordinate-wise on $W\times V$. Then $\textbf{W}\times \textbf{V} = \langle W\times V; \oplus,-,^+,^-, \langle 0_{\scriptscriptstyle W},0_{\scriptscriptstyle V} \rangle , \langle 1_{\scriptscriptstyle W}, 1_{\scriptscriptstyle V}\rangle \rangle$ is a strong quasi-MV* algebra. Moreover, a mapping $f: W \rightarrow V$ is called a \emph{strong quasi-MV* homomorphism} from \textbf{W} to \textbf{V}, if the following conditions are satisfied for any $x,y \in W$,

 (1) $f(1_{\scriptscriptstyle W}) = 1_{\scriptscriptstyle V}$ and $f(0_{\scriptscriptstyle W}) = 0_{\scriptscriptstyle V}$,

 (2) $f(x \oplus_{\scriptscriptstyle W} y)= f(x)\oplus_{\scriptscriptstyle V} f(y)$,

 (3) $f(-_{\scriptscriptstyle W} x) = -_{\scriptscriptstyle V} f(x)$.

It is easy to see that $f(x^{+_{\scriptscriptstyle W}})=(f(x))^{+_{\scriptscriptstyle V}}$ and $f(x^{-_{\scriptscriptstyle W}})=(f(x))^{-_{\scriptscriptstyle V}}$, if $f$ is a strong quasi-MV* homomorphism from $\textbf{W}$ to $\textbf{V}$.

\begin{lem}
Let \textbf{W} and \textbf{V} be strong quasi-MV* algebras. If a mapping $f: W\rightarrow V$ is the homomorphism from \textbf{W} to \textbf{V}, then $f(W)=\{ f(x) | x\in W \}$ is a subalgebra of \textbf{V}.
\end{lem}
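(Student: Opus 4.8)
The plan is to invoke the standard universal-algebraic principle that the homomorphic image of an algebra is a subalgebra of its codomain. Concretely, since $f(W) \subseteq V$ by construction, the only thing to verify is that $f(W)$ contains the two constants $0_{\scriptscriptstyle V}, 1_{\scriptscriptstyle V}$ and is closed under each of the four operations $\oplus_{\scriptscriptstyle V}$, $-_{\scriptscriptstyle V}$, $^{+_{\scriptscriptstyle V}}$ and $^{-_{\scriptscriptstyle V}}$ of the signature $\langle 2,1,1,1,0,0 \rangle$.

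First I would handle the constants: by condition (1) in the definition of a strong quasi-MV* homomorphism we have $f(1_{\scriptscriptstyle W}) = 1_{\scriptscriptstyle V}$ and $f(0_{\scriptscriptstyle W}) = 0_{\scriptscriptstyle V}$, so $1_{\scriptscriptstyle V}, 0_{\scriptscriptstyle V} \in f(W)$. Next I would take arbitrary $u, v \in f(W)$, written as $u = f(x)$ and $v = f(y)$ for some $x, y \in W$, and check closure operation by operation. Condition (2) gives $u \oplus_{\scriptscriptstyle V} v = f(x) \oplus_{\scriptscriptstyle V} f(y) = f(x \oplus_{\scriptscriptstyle W} y) \in f(W)$, and condition (3) gives $-_{\scriptscriptstyle V} u = -_{\scriptscriptstyle V} f(x) = f(-_{\scriptscriptstyle W} x) \in f(W)$. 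For the two remaining unary operations I would use the identities $f(x^{+_{\scriptscriptstyle W}}) = (f(x))^{+_{\scriptscriptstyle V}}$ and $f(x^{-_{\scriptscriptstyle W}}) = (f(x))^{-_{\scriptscriptstyle V}}$ recorded immediately before the lemma, which yield $u^{+_{\scriptscriptstyle V}} = f(x^{+_{\scriptscriptstyle W}}) \in f(W)$ and $u^{-_{\scriptscriptstyle V}} = f(x^{-_{\scriptscriptstyle W}}) \in f(W)$. This exhausts the whole signature, so $f(W)$ is closed under all operations and therefore a subalgebra of \textbf{V}.

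As for difficulty, there is essentially no obstacle here: the statement is the routine fact that a homomorphic image is a subalgebra, and every closure check collapses to a single application of a defining clause. The only point deserving a moment's care is not to overlook the derived operations $^+$ and $^-$, whose preservation is not among the three defining clauses (1)--(3) but is exactly the separately noted consequence stated just above the lemma; once that preservation is in hand, the verification is purely mechanical. I would not need to re-establish the strong quasi-MV* equations for $f(W)$, since those are universally quantified identities already holding in \textbf{V} and hence inherited by any subalgebra.
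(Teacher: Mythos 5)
Your proof is correct, and it is exactly the routine verification the paper intends: the paper itself states this lemma without proof, having just recorded the preservation of the derived operations $^{+}$ and $^{-}$ (which, in the strong case, follows because $x^{+}=1\oplus(-1\oplus x)$ and $x^{-}=-1\oplus(1\oplus x)$, so they are determined by $\oplus$, $-$ and the constants). Your closure checks on constants, $\oplus$, $-$, $^{+}$, $^{-}$, together with the observation that the defining identities are inherited by any subset closed under the operations, fill in precisely the details the paper leaves to the reader.
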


\begin{prop}\label{02}
Let \textbf{Q} be a strong quasi-MV* algebra. Then there exist an MV*-algebra \textbf{B} and a flat strong quasi-MV* algebra \textbf{F} such that \textbf{Q} can be embedded into the direct product $\textbf{B} \times \textbf{F}$. Such an embedding is an isomorphism if $\textbf{Q}\in \mathbb{MV^*}$ or $\textbf{Q}\in \mathbb{FSQMV^*}$.
\end{prop}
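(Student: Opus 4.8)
The plan is to realize the embedding as the diagonal map into the product of the two canonical quotients supplied by Proposition~\ref{01}. Concretely, I would set $\mathbf{B}=\mathbf{Q}/\mu$ and $\mathbf{F}=\mathbf{Q}/\tau$. By Proposition~\ref{01}(3), $\mathbf{B}$ is an MV*-algebra, so the first factor is of the required type. For the second factor, Proposition~\ref{01}(4) shows that $\mathbf{Q}/\tau$ is a flat quasi-MV* algebra; since $\mathbb{SQMV^\ast}$ is a variety and hence closed under homomorphic images, $\mathbf{Q}/\tau$ is moreover strong, so $\mathbf{F}$ is a flat strong quasi-MV* algebra in the sense of Definition~\ref{F1}.

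Next I would define $\iota\colon Q\to B\times F$ by $\iota(x)=\langle x/\mu,\,x/\tau\rangle$. Each coordinate is a canonical quotient homomorphism, so $\iota$ is a homomorphism of strong quasi-MV* algebras. Injectivity is immediate from Proposition~\ref{01}(2): if $\iota(x)=\iota(y)$, then $\langle x,y\rangle\in\mu$ and $\langle x,y\rangle\in\tau$, hence $\langle x,y\rangle\in\mu\cap\tau=\Delta$ and $x=y$. This already yields the embedding of $\mathbf{Q}$ into $\mathbf{B}\times\mathbf{F}$.

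It then remains to prove surjectivity in the two extreme cases, where I expect the real work to lie. If $\mathbf{Q}\in\mathbb{MV^\ast}$, then $x\oplus 0=x$ for every $x$, so $R(Q)=Q$ and $\tau=Q\times Q$, making $\mathbf{F}$ trivial; and since $\vee$ is commutative, $x\leq y$ together with $y\leq x$ forces $x=y\vee x=x\vee y=y$, so $\mu=\Delta$ and the first coordinate $x\mapsto x/\mu$ is bijective. If instead $\mathbf{Q}\in\mathbb{FSQMV^\ast}$, I would use Remark~\ref{remark1} and Lemma~\ref{f0} to compute $x^{+}=x^{-}=0$ and $x\vee y=0=y\oplus 0$ for all $x,y$; this gives $\mu=Q\times Q$, so $\mathbf{B}$ is trivial, while $R(Q)=\{0\}$ forces $\tau=\Delta$, so the second coordinate $x\mapsto x/\tau$ is bijective. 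In either case $\iota$ is a bijective homomorphism, and therefore an isomorphism.

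The main obstacle is not the homomorphism or injectivity part, which is packaged entirely into Proposition~\ref{01}, but the explicit identification of $\mu$, $\tau$, $R(Q)$, and the derived operations ${}^{+}$ and ${}^{-}$ in the two special classes; these routine but case-specific computations, resting on Remark~\ref{remark1} and Lemma~\ref{f0}, are what upgrade the generic embedding to an isomorphism.
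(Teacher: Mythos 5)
Your proposal is correct and follows essentially the same route as the paper: the same quotients $\mathbf{Q}/\mu$ and $\mathbf{Q}/\tau$ from Proposition~\ref{01}, the same diagonal map, and injectivity via $\mu\cap\tau=\Delta$. The only cosmetic differences are that you obtain strongness of $\mathbf{Q}/\tau$ from closure of the variety $\mathbb{SQMV^\ast}$ under homomorphic images where the paper computes it directly, and for surjectivity you identify both congruences explicitly ($\tau=\nabla$, $\mu=\Delta$ in the MV* case, and dually in the flat case) where the paper only needs the one that becomes $\nabla$ together with its criterion $x/\mu\cap y/\tau\neq\emptyset$; both are valid.
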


\begin{proof}
Denote $\textbf{B}= \textbf{Q}/\mu$ and $\textbf{F}= \textbf{Q}/\tau$, where $\mu$ and $\tau$ are defined in Proposition \ref{01}. Then we have that $\textbf{B}$ is an MV*-algebra and $\textbf{F}$ is a flat quasi-MV* algebra. Since \textbf{Q} is a strong quasi-MV* algebra, we have $(x/\tau)^{+_{Q/\tau}} \oplus_{\scriptscriptstyle Q/\tau} (0/\tau) = (x^+ /\tau ) \oplus_{\scriptscriptstyle Q/\tau} (0/\tau) = (x^+ \oplus 0)/\tau = x^+ /\tau = (x/\tau)^{+_{_{Q/\tau}}}$ and $(x/\tau)^{-_{_{Q/\tau}}} \oplus_{\scriptscriptstyle Q/\tau} (0/\tau) = (x^- /\tau) \oplus_{\scriptscriptstyle Q/\tau} (0/\tau) = (x^- \oplus 0)/\tau = x^- /\tau = (x/\tau)^{-_{_{Q/\tau}}}$, so $\textbf{F}$ is a flat strong quasi-MV* algebra.
Define the mapping $f : Q \rightarrow B\times F$ by $f(x)=\langle x/\mu, x/\tau \rangle$ for any $x \in Q$. For any $x,y\in Q$, if $f(x)=f(y)$, then $\langle x/\mu, x/\tau \rangle = \langle y/\mu, y/\tau \rangle$, it turns out that $x/\mu = y/\mu$ and $x/\tau = y/\tau$, so $\langle x,y \rangle \in \mu$ and $\langle x,y \rangle \in \tau$. Because $\mu \cap \tau = \Delta$ by Proposition \ref{01}, we have that $x=y$ and then $f$ is injective. Moreover, we have $f(x \oplus_{\scriptscriptstyle Q} y) = \langle (x\oplus_{\scriptscriptstyle Q} y)/\mu, (x\oplus_{\scriptscriptstyle Q} y)/\tau \rangle = \langle x/\mu \oplus_{\scriptscriptstyle Q/\mu} y/\mu, x/\tau \oplus_{\scriptscriptstyle Q/\tau} y/\tau \rangle = \langle x/\mu, x/\tau \rangle \oplus_{\scriptscriptstyle Q/\mu \times Q/\tau} \langle y/\mu, y/\tau \rangle = f(x) \oplus_{\scriptscriptstyle Q/\mu \times Q/\tau} f(y)$ and $f(-_{\scriptscriptstyle Q}x) = \langle (-_{\scriptscriptstyle Q} x)/\mu, (-_{\scriptscriptstyle Q} x)/\tau \rangle = \langle -_{\scriptscriptstyle Q/\mu} (x/\mu), -_{\scriptscriptstyle Q/\tau} (x/\tau) \rangle = -_{\scriptscriptstyle Q/\mu \times Q/\tau} \langle x/\mu, x/\tau \rangle = -_{\scriptscriptstyle Q/\mu \times Q/\tau} f(x)$. Besides, $f(0_{\scriptscriptstyle Q}) = \langle 0_{\scriptscriptstyle Q/\mu}, 0_{\scriptscriptstyle Q/\tau} \rangle = 0_{\scriptscriptstyle Q/\mu \times Q/\tau}$ and $f(1_{\scriptscriptstyle Q})= \langle 1_{\scriptscriptstyle Q/\mu}, 1_{\scriptscriptstyle Q/\tau} \rangle = 1_{\scriptscriptstyle Q/\mu \times Q/\tau}$. Hence $f$ is a strong quasi-MV* homomorphism
and then \textbf{Q} is embedded into the direct product $\textbf{B} \times \textbf{F}$.

Moreover, if the mapping $f$ is surjective, then for any $\langle x/\mu, y/\tau \rangle \in Q/\mu \times Q/\tau$, there is an element $z\in Q$ such that $f(z) = \langle z/\mu, z/\tau \rangle=\langle x/\mu, y/\tau \rangle$, it follows that $z/\mu=x/\mu$ and $z/\tau=y/\tau$, so $z\in x/\mu \cap y/\tau$ and then $x/\mu \cap y/\tau\neq \emptyset$. Conversely, if for any $x,y\in Q$, $x/\mu \cap y/\tau \neq \emptyset$, then it is easy to see that $f$ is surjective following the above proof.
So, if $Q\in \mathbb{MV^*}$, then $\tau = \nabla$ (all relation) and then $x/\mu \cap y/\tau \neq \emptyset$, so $f$ is surjective. If $Q\in \mathbb{FSQMV^*}$, then $\mu = \nabla$ and then $x/\mu \cap y/\tau \neq \emptyset$, we also have that $f$ is surjective. Hence $f$ is an isomorphism if $Q\in \mathbb{MV^*}$ or $Q\in \mathbb{FSQMV^*}$.
\end{proof}

\begin{exam}
The algebra $\textbf{S*} = \langle S^{*}; \oplus, -, ^+, ^-, \textbf{0}, \textbf{1} \rangle$ in Example 3.2 is a strong quasi-MV* algebra.
We can get that $\textbf{S*}$ is just the direct product of $\textbf{MV*}_{[-1,1]}$ and $\textbf{F}(\textbf{MV*}_{[-1,1]}, 0)$.
\end{exam}

Below we prove the standard completeness theorem for $\mathbb{FSQMV^*}$ with respect to the standard flat strong quasi-MV* algebra $\textbf{F}(\textbf{MV*}_{[-1,1]}, 0)$.

\begin{prop}\label{04}
Let $t$ and $s$ be terms in the language of strong quasi-MV* algebra. Then we have $\mathbb{FSQMV^*} \models t \thickapprox s$ iff $\textbf{F}(\textbf{MV*}_{[-1,1]},0) \models t \thickapprox s$.
\end{prop}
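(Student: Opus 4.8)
The plan is to prove the nontrivial right-to-left implication by reducing every term to a very simple normal form, exploiting the drastic collapse of operations in a flat strong quasi-MV* algebra, and then to check that the single algebra $\textbf{F}(\textbf{MV*}_{[-1,1]},0)$ separates the few possible normal forms. The forward implication is immediate, since $\textbf{F}(\textbf{MV*}_{[-1,1]},0)$ is itself a flat strong quasi-MV* algebra and hence any equation valid in all of $\mathbb{FSQMV^*}$ holds in it. For the converse, the first step is to pin down the structure of an arbitrary flat strong quasi-MV* algebra $\textbf{F}$: by Lemma \ref{f0} we have $x\oplus y = 0$ for all $x,y$; combining the strongness equations $x^+ = x^+\oplus 0$ and $x^- = x^-\oplus 0$ of Definition \ref{sqmv} with Lemma \ref{f0} gives $x^+ = x^- = 0$; flatness (Definition \ref{F1}) gives $0=1$; and (QMV*10) together with (QMV*7) say that $-$ is an involution fixing $0$. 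Thus the only operation carrying any information is the negation, while every derived binary operation and every non-negation unary operation is identically $0$.

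Next I would define a normal form for terms and verify that the reduction is sound over the whole variety $\mathbb{FSQMV^*}$. Given a term $t$, peel off its leading negations; the remaining \emph{core} is a variable, a constant, or a term headed by $\oplus$, $^+$, or $^-$. Using only the identities recorded above (all valid throughout $\mathbb{FSQMV^*}$), $t$ is equationally equal over $\mathbb{FSQMV^*}$ to exactly one of three things: a variable $x$, its negation $-x$, or the constant $0$. The first two cases occur when the core is a variable, with the sign determined by the parity of the number of leading negations; the last case occurs otherwise, since a constant core or a core headed by $\oplus$, $^+$, or $^-$ evaluates to $0$, and $0$ is fixed by $-$. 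Write $\mathrm{nf}(t)$ for this normal form.

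Finally I would conclude as follows. If $\mathrm{nf}(t)=\mathrm{nf}(s)$ syntactically, then $t\approx s$ follows from identities valid in all of $\mathbb{FSQMV^*}$, so $\mathbb{FSQMV^*}\models t\approx s$. If instead $\mathrm{nf}(t)\neq \mathrm{nf}(s)$, I would exhibit an assignment into $\textbf{F}(\textbf{MV*}_{[-1,1]},0)$ that separates them: since $[-1,1]$ contains a nonzero element $a$ with $a\neq -a$ and $0$ is the unique fixpoint of $a\mapsto -a$, any two distinct normal forms drawn from a variable, its negation, and $0$ take different values under a suitable assignment (for instance, sending the relevant variables to $1$ or to $0$). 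Hence $\textbf{F}(\textbf{MV*}_{[-1,1]},0)\not\models t\approx s$, and the contrapositive delivers the desired implication. The main obstacle is to carry out the normal-form reduction rigorously and to argue the separation step cleanly; I expect the separation to be the crux, as it is precisely what makes the one algebra $\textbf{F}(\textbf{MV*}_{[-1,1]},0)$ equationally complete for the entire variety, while the remainder is bookkeeping on the collapse identities.
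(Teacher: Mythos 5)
Your proposal is correct, and it rests on the same two pillars as the paper's own proof: the total collapse of $\oplus$, $^{+}$, $^{-}$ and the constants in any flat strong quasi-MV* algebra (Lemma \ref{f0} combined with strongness and flatness), and explicit separating assignments into $\textbf{F}(\textbf{MV*}_{[-1,1]},0)$. But you organize the argument differently. The paper proceeds by contradiction with a joint case analysis on the pair $(t,s)$ --- both, exactly one, or neither contains an occurrence of $\oplus$ --- with subcases according to whether the $\oplus$-free term is a negated variable or a negated constant, each case contradicting either $\textbf{F}\nvDash t\thickapprox s$ or $\textbf{F}(\textbf{MV*}_{[-1,1]},0)\models t\thickapprox s$. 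You instead reduce each term separately to a normal form ($x$, $-x$, or $0$) via identities valid throughout $\mathbb{FSQMV^*}$, then compare normal forms syntactically and separate distinct ones by an assignment into the standard flat algebra. This packaging buys two things. First, it replaces the double-contradiction bookkeeping by a direct equivalence. Second, and more substantively, your reduction explicitly disposes of terms whose core is headed by $^{+}$ or $^{-}$: a term such as $x^{+}$ contains no occurrence of $\oplus$, yet is neither a negated variable nor a negated constant, so it falls outside the forms enumerated in the paper's cases (2) and (3). The paper's enumeration is complete only if one first rewrites $^{+}$ and $^{-}$ in terms of $\oplus$ and $-$ using Remark \ref{remark1}, a step it leaves implicit; your identity $x^{+}=x^{+}\oplus 0=0$, obtained from strongness and Lemma \ref{f0}, handles these cores directly. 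So your route is not merely a restyling: it closes a small gap in the published case analysis while proving the same statement.
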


\begin{proof}
If $\mathbb{FSQMV^\ast} \models t \thickapprox s$,  then we have $\textbf{F}(\textbf{MV*}_{[-1,1]},0) \models t \thickapprox s$.

Now, if $\textbf{F}(\textbf{MV*}_{[-1,1]},0) \models t \thickapprox s$ and we suppose $\mathbb{FSQMV^\ast} \nvDash t \thickapprox s$, then there is a flat strong quasi-MV* algebra \textbf{F} such that $\textbf{F} \nvDash t \thickapprox s$. We distinguish several cases in order to discuss the question.

(1) Both $t$ and $s$ contain at least an occurrence of $\oplus$. We can assign $x_i $ any value $a_i \in F$ $(i=1,2,\cdots ,n)$, then $t^{\scriptscriptstyle \textbf{F}}(a_1, \cdots, a_n)= 0^{\scriptscriptstyle \textbf{F}}=s^{\scriptscriptstyle \textbf{F}}(a_1, \cdots, a_n)$ from Lemma \ref{f0}, against the hypothesis.

(2) Either $t$ or $s$ contains at least an occurrence of $\oplus$.  Without loss of generality, we assume that $t$ contains at least an occurrence of $\oplus$ and $s$ does not contain an occurrence of $\oplus$. Then $s$ is one of the following forms: $s$ is a variable $x_i$ and contains $k$ $(0\leq k)$ occurrences of $-$, or $s$ is a constant and contains $k$ $(0\leq k)$ occurrences of $-$. If $s$ is the former, we can assign $x_i$ the value $0\neq b_i \in [-1,1]$ and the variables $ x_1,\cdots, x_{i-1}, x_{i+1},\cdots, x_n $ any values $b_1, \cdots,b_{i-1}, b_{i+1},\cdots, b_n\in[-1,1]$, then  $t^{\scriptscriptstyle \textbf{F}(\textbf{MV*}_{[-1,1]},0)}(b_1, \cdots, b_n)= 0 \neq \pm b_{i}= s^{\scriptscriptstyle \textbf{F}(\textbf{MV*}_{[-1,1]},0)}(b_i)$.
This is a contradiction with $\textbf{F}(\textbf{MV*}_{[-1,1]},0) \models t \thickapprox s$.
If $s$ is the latter, for any value $a_i\in F$, we have $s(a_1, \cdots, a_n)=0$, $1$, or $-1$. If $s(a_1, \cdots, a_n)=0$, then
$s^{\scriptscriptstyle \textbf{F}}=0^{\scriptscriptstyle \textbf{F}}$ and then we have $t^{\scriptscriptstyle \textbf{F}} = s^{\scriptscriptstyle \textbf{F}}$, against the hypothesis.
If $s(a_1, \cdots , a_n) = 1$, then $s^{\scriptscriptstyle \textbf{F}}=1^{\scriptscriptstyle \textbf{F}} = 0^{\scriptscriptstyle \textbf{F}}$ and then we have  $t^{\scriptscriptstyle \textbf{F}} = s^{\scriptscriptstyle \textbf{F}}$, against the hypothesis. If $s(a_1, \cdots, a_n) = -1$, then $s^{\scriptscriptstyle \textbf{F}}=-1^{\scriptscriptstyle \textbf{F}} = -0^{\scriptscriptstyle \textbf{F}}=0^{\scriptscriptstyle \textbf{F}}$ and then we have $t^{\scriptscriptstyle \textbf{F}} = s^{\scriptscriptstyle \textbf{F}}$, against the hypothesis.

(3) Neither $t$ nor $s$ contains any occurrence of $\oplus$, then $t$ and $s$ contain at most one variable, it turns out that they have one of the following forms:
(3.1) $t$ is a constant and contains $k$ $(0\leq k)$ occurrences of $-$, $s$ is a constant and contains $l$  $(0\leq l)$ occurrences of $-$. Then we have $t^{\scriptscriptstyle \textbf{F}}=0^{\scriptscriptstyle \textbf{F}} = s^{\scriptscriptstyle \textbf{F}}$ according to (2), so $t^{\scriptscriptstyle \textbf{F}} = s^{\scriptscriptstyle \textbf{F}}$, against the hypothesis.
(3.2) $t$ is a variable $x$ and contains $k$  $(0\leq k)$ occurrences of $-$, $s$ is a constant and contains $l$  $(0\leq l)$ occurrences of $-$. Then we have $s^{\textbf{F}(\textbf{MV*}_{[-1,1]},0)} = 0$ according to (2). We can assign $x$ the value $0 \neq a\in [-1,1]$, then we have $t^{\scriptscriptstyle \textbf{F}(\textbf{MV*}_{[-1,1]},0)}(a) \neq 0 = s^{\scriptscriptstyle \textbf{F}(\textbf{MV*}_{[-1,1]},0)}(a)$. This is a contradiction with $\textbf{F}(\textbf{MV*}_{[-1,1]},0) \models t \thickapprox s$.
(3.3) $t$ is a variable $x$ and contains $k$  $(0\leq k)$ occurrences of $-$, $s$ is a variable $y$ and contains $l$ $(0\leq l)$ occurrences of $-$. Then we can assign $x$ the value $0=a\in [-1,1]$ and $y$ the value $0\neq b \in [-1,1]$, so $t^{\textbf{F}(\textbf{MV*}_{[-1,1]},0)}(a) \neq s^{\textbf{F}(\textbf{MV*}_{[-1,1]},0)}(b)$. This is a contradiction with $\textbf{F}(\textbf{MV*}_{[-1,1]},0) \models t \thickapprox s$.
(3.4) $t$ is a variable $x$ and contains $k$  $(0\leq k)$ occurrences of $-$, $s$ is a variable $x$ and contains $l$  $(0\leq l)$ occurrences of $-$. If $k$ and $l$ have same parity, then $t^{\scriptscriptstyle \textbf{F}} = s^{\scriptscriptstyle \textbf{F}}$, against the hypothesis.
If $k$ and $l$ have different parity, then we can assign $x$ the value $0\neq a\in [-1,1]$, it follows that $t^{\scriptscriptstyle \textbf{F}(\textbf{MV*}_{[-1,1]},0)}(a) \neq s^{\scriptscriptstyle \textbf{F}(\textbf{MV*}_{[-1,1]},0)}(a)$.
This is a contradiction with $\textbf{F}(\textbf{MV*}_{[-1,1]},0) \models t \thickapprox s$.

Thus,  if $\textbf{F}(\textbf{MV*}_{[-1,1]},0) \models t \thickapprox s$, then we have $\mathbb{FSQMV^\ast} \models t \thickapprox s$.
\end{proof}

\begin{lem}\emph{\cite{11}}\label{03}
Let $t$ and $s$ be terms in the language of MV*-algebra. Then $\mathbb{MV^\ast} \models t \thickapprox s$ iff $\textbf{MV*}_{[-1,1]} \models t \thickapprox s$.
\end{lem}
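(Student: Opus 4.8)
The plan is to reduce the standard completeness of MV*-algebras to the classical Chang completeness theorem for MV-algebras, exploiting the positive/negative decomposition recalled in the introduction. The forward direction is immediate: since $\textbf{MV*}_{[-1,1]}$ is itself an MV*-algebra, $\mathbb{MV^\ast} \models t \thickapprox s$ forces $\textbf{MV*}_{[-1,1]} \models t \thickapprox s$. So the substance lies in the converse, which I would prove contrapositively: assuming some MV*-algebra $\mathbf{B}$ refutes $t \thickapprox s$, I would produce a refutation already in $\textbf{MV*}_{[-1,1]}$.

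First I would isolate the structural input. By Chang's decomposition (quoted in the introduction, \cite{2,11}), every $x \in B$ splits as $x = x^+ \oplus x^-$ with $x^+ \in B^+ = B_{\geq 0}$ and $x^- \in B^- = B_{\leq 0}$, the positive cone $B^+$ carries an MV-algebra structure, and the negation $x \mapsto -x$ restricts to an isomorphism between $B^+$ and $B^-$. Using (MV*6)--(MV*9) I would then prove a term-decomposition lemma: for every MV*-term $t(x_1,\dots,x_n)$ there is an MV-term $\hat t$ such that, under any assignment $x_i \mapsto a_i$, one has $t^{\mathbf{B}}(a_1,\dots,a_n)^+ = \hat t^{B^+}(a_1^+,\dots,a_n^+)$ and $t^{\mathbf{B}}(a_1,\dots,a_n)^- = -\,\hat t^{B^+}(-a_1^-,\dots,-a_n^-)$. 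The induction is routine once the base interactions are settled: (MV*8) gives $(x\oplus y)^+ = x^+\oplus y^+$ and $(x\oplus y)^- = x^-\oplus y^-$, while (MV*6)--(MV*7) show that negation interchanges the two cones, so the positive coordinate of a term is computed entirely inside the MV-algebra $B^+$ and the negative coordinate is its mirror image under the isomorphism.

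With this in hand, the key observation is that an MV*-equation is faithfully tracked by its two coordinates: since $x$ is determined by the pair $(x^+, x^-)$ via $x = x^+ \oplus x^-$, we have $t^{\mathbf{B}} = s^{\mathbf{B}}$ as functions iff $\hat t^{B^+} = \hat s^{B^+}$ and $\hat t^{B^-} = \hat s^{B^-}$; by the cone isomorphism both amount to the single MV-identity $\hat t \thickapprox \hat s$ holding in $B^+$. Hence $\mathbf{B} \nvDash t \thickapprox s$ yields an MV-algebra $B^+$ with $B^+ \nvDash \hat t \thickapprox \hat s$. By Chang's standard completeness for MV-algebras, $\hat t \thickapprox \hat s$ then fails already in the standard MV-algebra $[0,1]$. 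Finally, the positive cone of $\textbf{MV*}_{[-1,1]}$ is exactly this standard MV-algebra $[0,1]$, so transporting the refuting assignment back through the decomposition lemma gives elements of $[-1,1]$ on which $t$ and $s$ disagree, that is, $\textbf{MV*}_{[-1,1]} \nvDash t \thickapprox s$, as required.

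The main obstacle is the term-decomposition lemma, and specifically verifying that the positive-part operation $(-)^+$ behaves as a homomorphism relative to each term constructor. The clause for $\oplus$ is handed to us by (MV*8), but I expect the careful points to be (i) checking that the MV-negation on $B^+$ induced by the MV*-structure (concretely $x \mapsto 1 \oplus (-x)$ restricted to $B^+$) is precisely the one for which $[0,1]$ is the \emph{standard} MV-algebra, so that Chang's theorem applies with the intended standard model, and (ii) confirming that equality of elements is genuinely reflected by equality of the positive and negative parts, which is where the decomposition $x = x^+ \oplus x^-$ together with $x\oplus 0 = x$ is used. Once these are pinned down, the reduction to Chang's theorem is purely formal.
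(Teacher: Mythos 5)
Your proof has a fatal gap at its core: the ``term-decomposition lemma'' is false, and so is the base case you claim for it. You assert that (MV*8) gives $(x\oplus y)^{+}=x^{+}\oplus y^{+}$ and $(x\oplus y)^{-}=x^{-}\oplus y^{-}$, but (MV*8) says something different, namely $x\oplus y=(x^{+}\oplus y^{+})\oplus(x^{-}\oplus y^{-})$; it does not commute $(\cdot)^{+}$ past $\oplus$. The identity you need fails already in the standard model $\textbf{MV*}_{[-1,1]}$, where $x^{+}=\max\{0,x\}$ and $x\oplus y=\max\{-1,\min\{1,x+y\}\}$: taking $x=\tfrac{1}{2}$, $y=-\tfrac{1}{2}$ gives $(x\oplus y)^{+}=0$ while $x^{+}\oplus y^{+}=\tfrac{1}{2}$. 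The same example kills the negative-part clause. Moreover, negation swaps the two cones, $(-x)^{+}=-(x^{-})$, so the positive coordinate of a term value cannot in general be \emph{any} function of the positive coordinates of the arguments alone (e.g.\ $t(x)=-x$ at $a=-\tfrac{1}{2}$ versus $a'=0$: both have $a^{+}=a'^{+}=0$ but $t(a)^{+}=\tfrac{1}{2}\neq 0=t(a')^{+}$). Since the induction collapses at $\oplus$ and at $-$, the ``key observation'' reducing $t\thickapprox s$ to a single MV-identity $\hat t\thickapprox\hat s$ in $B^{+}$, and with it the appeal to Chang's completeness theorem, has no support.

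The failure is not cosmetic: the whole point of the MV*-structure is the cancellation between positive and negative parts under $\oplus$, which is exactly what a coordinate-wise translation cannot see. Any honest reduction along these lines must track both coordinates simultaneously --- MV-terms in $2n$ variables $(a_{1}^{+},-a_{1}^{-},\dots,a_{n}^{+},-a_{n}^{-})$, with truncated-difference corrections for $\oplus$ --- and must then cope with the fact that admissible tuples satisfy a disjointness constraint (each pair $(a_{i}^{+},-a_{i}^{-})$ has one entry equal to $0$), so Chang's theorem cannot be invoked off the shelf for the $2n$-variable identities; one needs the refuting standard assignment to respect that constraint. Working this out is essentially the substance of the cited result. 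Note also that the paper itself does not prove this lemma at all: it is quoted verbatim from \cite{11} (Lewin--Sagastume--Massey), so there is no ``paper proof'' to match --- but your attempted replacement proof, as it stands, is incorrect.
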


\begin{thm}\label{05}
Let $t$ and $s$ be terms in the language of strong quasi-MV* algebra. Then we have $\mathbb{SQMV^\ast} \models t \thickapprox s$ iff $\textbf{S*} \models t \thickapprox s$.
\end{thm}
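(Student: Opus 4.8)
The plan is to reduce everything to three facts already in hand: the embedding of Proposition~\ref{02}, the standard completeness for the flat case (Proposition~\ref{04}), and the standard completeness for MV*-algebras (Lemma~\ref{03}). The structural observation that makes this work, recorded in the Example immediately after Proposition~\ref{02}, is that $\textbf{S*}$ is exactly the direct product $\textbf{MV*}_{[-1,1]} \times \textbf{F}(\textbf{MV*}_{[-1,1]}, 0)$, so I can analyze $\textbf{S*}$ coordinate by coordinate.

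The forward implication is immediate: $\textbf{S*}$ is itself a strong quasi-MV* algebra, so any identity valid throughout $\mathbb{SQMV^\ast}$ holds in $\textbf{S*}$ in particular. For the converse I would assume $\textbf{S*} \models t \thickapprox s$ and use the standard fact that an identity holds in a direct product if and only if it holds in each factor. Since $\textbf{S*} = \textbf{MV*}_{[-1,1]} \times \textbf{F}(\textbf{MV*}_{[-1,1]}, 0)$, this yields both $\textbf{MV*}_{[-1,1]} \models t \thickapprox s$ and $\textbf{F}(\textbf{MV*}_{[-1,1]}, 0) \models t \thickapprox s$. Feeding the first into Lemma~\ref{03} and the second into Proposition~\ref{04} upgrades these pointwise statements to the full varietal statements $\mathbb{MV^\ast} \models t \thickapprox s$ and $\mathbb{FSQMV^*} \models t \thickapprox s$.

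To finish, I would take an arbitrary $\textbf{Q} \in \mathbb{SQMV^\ast}$ and apply Proposition~\ref{02} to embed it into $\textbf{B} \times \textbf{F}$ for some MV*-algebra $\textbf{B}$ and some flat strong quasi-MV* algebra $\textbf{F}$. By the previous step $\textbf{B} \models t \thickapprox s$ and $\textbf{F} \models t \thickapprox s$, hence $\textbf{B} \times \textbf{F} \models t \thickapprox s$; and since identities are inherited by subalgebras, $\textbf{Q} \models t \thickapprox s$. As $\textbf{Q}$ was arbitrary, $\mathbb{SQMV^\ast} \models t \thickapprox s$.

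There is no genuine obstacle beyond correctly assembling these ingredients, since the real work is already done in Propositions~\ref{02} and~\ref{04}. The only points demanding care are invoking the embedding of Proposition~\ref{02} in the right direction---namely that \emph{every} member of $\mathbb{SQMV^\ast}$ sits inside a product of the required shape, rather than the reverse---and explicitly using the two standard preservation principles (identities pass from the factors to a direct product, and from any algebra down to its subalgebras). These principles are what let the two completeness results for the factors combine into completeness for the whole variety.
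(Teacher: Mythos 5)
Your proposal is correct and is essentially the paper's own argument: both directions rest on the same three ingredients (the embedding of Proposition~\ref{02}, the flat standard completeness of Proposition~\ref{04}, and Lemma~\ref{03}), together with the decomposition $\textbf{S*} \cong \textbf{MV*}_{[-1,1]} \times \textbf{F}(\textbf{MV*}_{[-1,1]},0)$ and the standard preservation of identities under products, subalgebras, and projections. The only difference is presentational --- you argue directly from $\textbf{S*} \models t \thickapprox s$ up to the whole variety, while the paper runs the same argument contrapositively from a failing algebra $\textbf{Q}$ down to a failure in $\textbf{S*}$.
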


\begin{proof}
If $\mathbb{SQMV^\ast} \models t \thickapprox s$, then we have $\textbf{S*} \models t \thickapprox s$.

Conversely, if $\textbf{S*} \models t \thickapprox s$ and we suppose that $\mathbb{SQMV^\ast} \nvDash t \thickapprox s$, then there exists a strong quasi-MV* algebra \textbf{Q} such that $\textbf{Q} \nvDash t\thickapprox s$. By Proposition \ref{02}, we know that \textbf{Q} can be embedded into the direct product $\textbf{Q}/\mu \times \textbf{Q}/\tau$. Since $\textbf{Q} \nvDash t\thickapprox s$, we have either $\textbf{Q}/\mu \nvDash t\thickapprox s$ or $\textbf{Q}/\tau \nvDash t\thickapprox s$. If the former holds, then we have  $\textbf{MV*}_{[-1,1]} \nvDash t \thickapprox s$ by Lemma \ref{03}, so $\textbf{S*} \nvDash t \thickapprox s$. If the latter holds, then we have $\textbf{F}(\textbf{MV*}_{[-1,1]},0) \nvDash t \thickapprox s$ by Proposition \ref{04}, so $\textbf{S*} \nvDash t \thickapprox s$.

Thus, if $\textbf{S*} \models t \thickapprox s$,  then we have $\mathbb{SQMV^\ast} \models t \thickapprox s$.
\end{proof}

Now we prove that the square standard strong quasi-MV* algebra $\textbf{S*}$ and the disk standard strong quasi-MV* algebra $\textbf{D*}$ have the same equational theory.

\begin{lem}\label{L1}
Let $t(x_1, \cdots , x_n)$ be a term in the language of strong quasi-MV* algebra and contain at least an occurrence of $\oplus$. Then for any $\langle a_1,b_1 \rangle, \cdots , \langle a_n,b_n \rangle \in S^{*}$, we have $t(\langle a_1,b_1 \rangle, \cdots ,$ $ \langle a_n,b_n \rangle)=t(\langle a_1, 0 \rangle, \cdots , \langle a_n, 0 \rangle)=\langle z,0 \rangle$ for some $z\in [-1,1]$.
\end{lem}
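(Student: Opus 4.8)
The plan is to proceed by structural induction on the term $t$, establishing simultaneously two auxiliary facts about the evaluation of an arbitrary term in $\textbf{S*}$. Write $\pi_1$ and $\pi_2$ for the two coordinate projections on $S^*$. The first fact is that for every term $t(x_1,\ldots,x_n)$, the value $\pi_1 t(\langle a_1,b_1 \rangle,\ldots,\langle a_n,b_n \rangle)$ depends only on $a_1,\ldots,a_n$ and not on $b_1,\ldots,b_n$. This is routine: each defining operation of $\textbf{S*}$ computes its output first coordinate from the input first coordinates alone — $\oplus$ via $\max\{-1,\min\{1,a+c\}\}$, negation via $-a$, and $^+,^-$ via $\max\{0,a\},\min\{0,a\}$ — so the property is preserved at every inductive step and holds at the base for variables and constants. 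The same observation shows $z=\pi_1 t \in [-1,1]$, since every operation keeps the first coordinate inside $[-1,1]$.

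The second, more delicate fact is a classification of the second coordinate: for every term $t$, either $\pi_2 t = 0$ for all inputs, or $t$ is an iterated negation applied to a single variable, i.e.\ $t = -^k x_i$ for some $k\geq 0$ and some $i$. I would prove this by induction as well. The base cases (a variable, a constant) are immediate. For the inductive step, the key is that each of $\oplus$, $^+$, $^-$ outputs second coordinate $0$ by definition, so any term whose outermost connective is one of these falls into the first alternative regardless of its subterms; and for negation, if the immediate subterm $s$ already satisfies $\pi_2 s = 0$ then $\pi_2(-s) = -0 = 0$, while if $s = -^k x_i$ then $-s = -^{k+1} x_i$ remains in the second alternative.

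Combining the two facts finishes the lemma. Since $t$ is assumed to contain at least one occurrence of $\oplus$, it cannot be an iterated negation of a single variable (such terms are $\oplus$-free), so the classification forces $\pi_2 t = 0$ identically; hence $t(\langle a_1,b_1 \rangle,\ldots,\langle a_n,b_n \rangle) = \langle z,0 \rangle$. By the first fact, $z$ is a function of $a_1,\ldots,a_n$ only, so replacing each $b_i$ by $0$ leaves the value unchanged, giving $t(\langle a_1,b_1 \rangle,\ldots,\langle a_n,b_n \rangle) = t(\langle a_1,0 \rangle,\ldots,\langle a_n,0 \rangle) = \langle z,0 \rangle$ with $z\in[-1,1]$.

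I expect the only genuinely non-mechanical step to be the second-coordinate classification — in particular, noticing that negation cannot revive a vanished second coordinate because $-0=0$, so that the exceptional terms are exactly the purely negated single variables, which are precisely the $\oplus$-free ones excluded by hypothesis. Everything else is a direct unwinding of the operation tables of $\textbf{S*}$.
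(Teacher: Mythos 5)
Your proof is correct, but it takes a genuinely different route from the paper's. The paper argues by induction on the number of occurrences of $\oplus$: it writes $t=(-)^r(p\oplus q)$ and works through an extended case analysis on whether $p$ and $q$ are negated variables or negated constants, with subcases on the parities of the negation counts. You instead run a structural induction over \emph{all} terms, separating two invariants: the first coordinate of a term's value in $\textbf{S*}$ is a function of the first coordinates of the inputs alone (and lies in $[-1,1]$), and the second coordinate vanishes identically unless the term has the form $(-)^k x_i$; the lemma follows by specializing to terms containing $\oplus$. Your organization buys three things. It eliminates the parity bookkeeping. It covers the whole signature: $^+$ and $^-$ are operations of strong quasi-MV* algebras, yet the paper's case analysis never treats them (one must implicitly appeal to Remark \ref{remark1} to eliminate them, which also changes the count of $\oplus$'s); your induction disposes of them in one line, since they output second coordinate $0$. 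And it handles uniformly the situation where only one argument of an $\oplus$ contains $\oplus$: in the paper's inductive step, the equality $p(\langle a_1,b_1\rangle,\cdots)\oplus q(\langle a_1,b_1\rangle,\cdots)=p(\langle a_1,0\rangle,\cdots)\oplus q(\langle a_1,0\rangle,\cdots)$ is licensed by the inductive hypothesis only when both $p$ and $q$ contain $\oplus$; when one is $\oplus$-free the equality still holds, but for your reason (that $\oplus$ reads only first coordinates), not the one the paper cites. What the paper's computation buys in exchange is explicitness: it exhibits the value $z$ concretely in each case. One small slip in your closing remark: the exceptional terms $(-)^k x_i$ are not \emph{precisely} the $\oplus$-free terms (negated constants, and terms such as $x^+$, are also $\oplus$-free); the implication you actually use --- a term containing $\oplus$ cannot be of the form $(-)^k x_i$ --- is the correct one, so nothing breaks.
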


\begin{proof}
We prove the result by induction on the number of occurrences of $\oplus$.

If $t$ contains one occurrence of $\oplus$, then $t= (-)^{r} (p \oplus q)$ where $p$, $q$ are terms which don't contain $\oplus$ and $(-)^{r}$ denotes $r (0\leq r)$ occurrences of $-$. We distinguish several cases in order to discuss.

Suppose that $p$ is a variable $x$ with $k(0\leq k)$ occurrences of $-$ and $q$ is a variable $y$ with $l(0\leq l)$ occurrences of $-$. Then $t= (-)^{r}((-)^{k}x \oplus (-)^{l}y)$. If $r$, $k$ and $l$ are even, then for any $\langle a,b \rangle, \langle c,d \rangle \in S^*$, we have $t(\langle a,b \rangle, \langle c,d \rangle)=\langle a,b \rangle \oplus \langle c,d \rangle = \langle  \max\{-1, \min\{ 1, a+c\}\}, 0 \rangle =\langle a,0 \rangle \oplus \langle c,0 \rangle$, it turns out that $t(\langle a,b \rangle, \langle c,d \rangle)=t(\langle a,0 \rangle, \langle c,0 \rangle)= \langle z,0 \rangle $ where $z=\max\{-1, \min\{ 1, a+c\}\} \in [-1,1]$.
If $k$, $l$ are even and $r$ is odd, then for any $\langle a,b \rangle, \langle c,d \rangle \in S^*$, we have $t(\langle a,b \rangle, \langle c,d \rangle)=-(\langle a,b \rangle \oplus \langle c,d \rangle) = \langle  \max\{-1, \min\{ 1, -a-c\}\}, 0 \rangle =-(\langle a,0 \rangle \oplus \langle c,0 \rangle)$, it turns out that $t(\langle a,b \rangle, \langle c,d \rangle)=t(\langle a,0 \rangle, \langle c,0 \rangle)= \langle z,0 \rangle $ where $z=\max\{-1, \min\{ 1, -a-c\}\} \in [-1,1]$.
 The rest can be proved similarly.

Suppose that $p$ is a variable $x$ with $k(0\leq k)$ occurrences of $-$ and $q$ is a constant with $l(0\leq l)$ occurrences of $-$. Then $t =(-)^{r}((-)^{k}x \oplus (-)^{l}0)$ or $t =(-)^{r}((-)^{k}x \oplus (-)^{l}1)$. If the former holds, without loss of generality, we can assume that $r$, $k$ and $l$ are even, then for any $\langle a,b \rangle \in S^*$, we have $t(\langle a,b \rangle, \langle 0,0 \rangle)= \langle a,b \rangle \oplus \langle 0,0 \rangle =\langle \max\{-1, \min\{ 1, a+0\}\}, 0 \rangle = \langle a,0 \rangle=\langle a,0 \rangle \oplus \langle 0,0 \rangle$, it means that $t(\langle a,b \rangle , \langle 0,0 \rangle)= t(\langle a,0 \rangle , \langle 0,0 \rangle) = \langle z, 0 \rangle$, where $z=a \in [-1,1]$. The rest can be proved similarly.
If the later holds, without loss of generality, we can assume that $r$, $k$ and $l$ are odd, then for any $\langle a,b \rangle \in S^*$, we have $t(\langle a,b \rangle, \langle 1,0 \rangle)= -(-\langle a,b \rangle \oplus (-\langle 1,0 \rangle)) =\langle \max\{-1, \min\{ 1, a+1\}\}, 0 \rangle = -(-\langle a,0 \rangle \oplus (-\langle 1,0 \rangle))$, it means that $t(\langle a,b \rangle, \langle 1,0 \rangle)=t(\langle a,0 \rangle, \langle 1,0 \rangle) = \langle z,0 \rangle$ where $z=\max\{-1, \min\{ 1, a+1\}\} \in [-1,1]$. The rest can be proved similarly.

Suppose that $p$ is a constant with $k (0\leq k)$ occurrences of $-$ and $q$ is a constant with $l (0\leq l)$ occurrences of $-$. Then $p = (-)^{k}0$ or $(-)^{k}1$, $q = (-)^{l}0$ or $(-)^{l}1$. We only prove when $p = (-)^{k}1$, $q = (-)^{l}1$ and $k$, $l$ are even. The rest can be proved similarly. For any $\langle a_1,b_1 \rangle, \cdots , \langle a_n,b_n \rangle \in S^*$, $t(\langle a_1,b_1 \rangle, \cdots , \langle a_n,b_n \rangle)= (-)^{r}(\langle 1,0 \rangle \oplus \langle1,0 \rangle) = t(\langle a_1, 0 \rangle, \cdots , \langle a_n, 0 \rangle)=\langle z,0 \rangle$, where $z = (-)^{r}1 \in [-1,1]$.

Now we assume that $t(\langle a_1,b_1 \rangle, \cdots, \langle a_n, b_n \rangle)=t(\langle a_1, 0 \rangle, \cdots, \langle a_n, 0 \rangle)=\langle z, 0 \rangle$ holds when $t$ contains $m (m\geq 2)$ occurrences of $\oplus$. Then, when $t$ contains $m+1$ occurrences of $\oplus$, we suppose that $t=(-)^{r} (p \oplus q)$ where $p$ and $q$ are terms containing the occurrences of $\oplus$ no more than $m$. If $r$ is even, then for any $\langle a_1,b_1 \rangle, \cdots , \langle a_n,b_n \rangle \in S^*$, we have
$$\begin{aligned}
t(\langle a_1,b_1 \rangle, \cdots, \langle a_n,b_n \rangle) &=p(\langle a_1,b_1 \rangle, \cdots, \langle a_n,b_n \rangle) \oplus q(\langle a_1,b_1 \rangle, \cdots , \langle a_n,b_n \rangle)\\
&=p(\langle a_1,0 \rangle, \cdots, \langle a_n,0 \rangle) \oplus q(\langle a_1,0 \rangle, \cdots, \langle a_n,0 \rangle)\\
&=t(\langle a_1, 0 \rangle, \cdots, \langle a_n, 0 \rangle),\\
\end{aligned}$$
and
$$\begin{aligned}
t(\langle a_1, 0 \rangle, \cdots , \langle a_n, 0 \rangle) &=p(\langle a_1,0 \rangle, \cdots , \langle a_n,0 \rangle) \oplus q(\langle a_1,0 \rangle, \cdots, \langle a_n,0 \rangle)\\
 &= \langle z_1,0 \rangle \oplus \langle z_2, 0 \rangle \\
 &= \langle \max\{ -1, \min\{ 1, z_1 + z_2 \} \} ,0 \rangle.\\
\end{aligned}$$
Put $z = \max\{ -1, \min\{ 1, z_1 + z_2 \} \}$, then we have $z\in [-1,1]$ and $t(\langle a_1,b_1 \rangle, \cdots, \langle a_n,b_n \rangle) = t(\langle a_1, 0 \rangle,$ $ \cdots, \langle a_n, 0 \rangle) = \langle z,0 \rangle$. If $r$ is odd, then for any $\langle a_1,b_1 \rangle, \cdots , \langle a_n,b_n \rangle \in S^{*}$, we can also show that the result is true.

Hence $t(\langle a_1,b_1 \rangle, \cdots, \langle a_n,b_n \rangle)=t(\langle a_1, 0 \rangle, \cdots, \langle a_n, 0 \rangle)= \langle z,0 \rangle$ for some $z \in [-1,1]$.
\end{proof}

\begin{prop}\label{06}
Let $t$ and $s$ be terms in the language of strong quasi-MV* algebra. Then we have $\textbf{S*} \models t \thickapprox s$ iff $\textbf{D*} \models t \thickapprox s$.
\end{prop}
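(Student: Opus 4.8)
The plan is to dispatch the forward implication instantly and concentrate all the work on the converse. Since $\textbf{D*}$ is a subalgebra of $\textbf{S*}$, every equation valid in $\textbf{S*}$ descends to $\textbf{D*}$, giving the ``only if'' direction. For the ``if'' direction I would argue by contraposition: assuming $\textbf{S*}\nvDash t\thickapprox s$, I would produce an assignment with all entries in $D^*$ that still separates $t$ and $s$. This suffices because the operations of $\textbf{D*}$ are the restrictions of those of $\textbf{S*}$, so $t^{\textbf{D*}}$ and $s^{\textbf{D*}}$ agree with $t^{\textbf{S*}}$ and $s^{\textbf{S*}}$ on tuples from $(D^*)^n$.

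The backbone is a structural dichotomy for terms, refining Lemma \ref{L1}. First I would record that the first coordinate of $t^{\textbf{S*}}(\langle a_1,b_1\rangle,\ldots,\langle a_n,b_n\rangle)$ depends only on $a_1,\ldots,a_n$, since every operation computes its first coordinate from the first coordinates of its arguments. Next, stripping leading $-$'s, I write $t=(-)^k w$ with $w$ not headed by $-$. If $w$ is $\oplus$-, $^+$- or $^-$-headed, or a constant, then $w$ always returns a point $\langle z,0\rangle$, so $t$ returns $\langle(-1)^k z,0\rangle$; such a \emph{stable} term, by the first-coordinate remark, depends only on $a_1,\ldots,a_n$ and takes values in the segment $\{\langle z,0\rangle:z\in[-1,1]\}\subseteq D^*$. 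The only non-stable terms are the possibly-negated variables $(-)^k x_i$.

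Then I case on the shapes of $t$ and $s$. If both are stable, I project the separating assignment onto the real axis, replacing each $\langle a_i,b_i\rangle$ by $\langle a_i,0\rangle\in D^*$; by stability neither value changes, so the projected tuple still separates $t$ and $s$ (this is the $\oplus$-mechanism of Lemma \ref{L1}, now extended to $^+$ and $^-$). If both are possibly-negated variables, they either involve distinct variables, or the same variable with opposite parity of $-$'s, and in each situation a single real-axis point such as $\langle 1,0\rangle$ separates them inside $D^*$; if they involve the same variable with equal parity, they denote the same term function, contradicting $\textbf{S*}\nvDash t\thickapprox s$.

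The one genuinely delicate case, and the expected main obstacle, is the \emph{mixed} one where $t=(-)^k x_i$ is a possibly-negated variable while $s$ is stable. Here projecting the witness to the real axis can collapse the discrepancy, because it may live entirely in the second coordinate. The remedy is to discard the given witness and instead feed the boundary point $\langle 0,1\rangle\in D^*$ (note $0^2+1^2\le 1$) into $x_i$ and $\langle 0,0\rangle$ elsewhere: then $t$ evaluates to $\langle 0,\pm 1\rangle$ with nonzero second coordinate, whereas the stable term $s$ always has second coordinate $0$, so they differ on a disk assignment. This is the step that genuinely uses the two-dimensional geometry of the disk rather than only its real diameter, exploiting that the imaginary direction survives in $\textbf{D*}$ while stable terms are blind to it.
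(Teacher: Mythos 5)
Your proof is correct, and its two engines are the same ones the paper runs on: the subalgebra observation for the easy direction, and for the converse a case split in which ``collapsing'' terms are shown to be insensitive to second coordinates (so a separating tuple can be projected onto the real axis, landing in $D^*$), while possibly-negated variables are separated from everything else by feeding in a disk point with nonzero second coordinate. The difference is in the decomposition. The paper cases on the number of occurrences of $\oplus$, invokes Lemma \ref{L1} for $\oplus$-containing terms, and then treats $\oplus$-free terms as negated variables or negated constants; read literally, that classification (and Lemma \ref{L1} itself) ignores terms built with $^+$ and $^-$, which is legitimate only because in any strong quasi-MV* algebra $x^+ = 1\oplus(-1\oplus x)$ and $x^- = -1\oplus(1\oplus x)$ (Remark \ref{remark1}), so both $\textbf{S*}$ and $\textbf{D*}$ satisfy $t\approx t'$ for a $^+$,$^-$-free rewriting $t'$ of $t$ --- a step the paper never states. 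Your stable/non-stable dichotomy, grounded in the observation that every operation of $\textbf{S*}$ computes its first coordinate from first coordinates alone and that $\oplus$-, $^+$-, $^-$- and constant-headed terms output second coordinate $0$, absorbs $^+$ and $^-$ natively and so closes that gap; it also compresses the paper's subcases (3.1)--(3.4) into the ``both non-stable'' case. What you give up is nothing essential: your first-coordinate remark plays exactly the role of the paper's inductive Lemma \ref{L1}, and your mixed-case witness $\langle 0,1\rangle$ is the paper's choice of $\langle c_i,d_i\rangle$ with $d_i\neq 0$ made concrete.
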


\begin{proof}
If $\textbf{S*} \models t \approx s$, then we have $\textbf{D*} \models t \approx s$.

If $\textbf{D*} \models t \approx s$ and assume that $\textbf{S*} \nvDash t \approx s$, then there exist $\langle a_1,b_1 \rangle, \cdots, \langle a_n,b_n \rangle \in S^*$ such that $t^{\scriptscriptstyle \textbf{S*}}(\langle a_1 , b_1 \rangle, \cdots , \langle a_n, b_n \rangle) \neq s^{\scriptscriptstyle \textbf{S*}}(\langle a_1 , b_1 \rangle, \cdots , \langle a_n, b_n \rangle)$. We distinguish several cases in order to discuss.

(1) Both $t$ and $s$ contain at least an occurrence of $\oplus$. Since $t^{\scriptscriptstyle \textbf{S*}}(\langle a_1 , b_1 \rangle, \cdots , \langle a_n, b_n \rangle)$ $ \neq s^{\scriptscriptstyle \textbf{S*}}(\langle a_1 , b_1 \rangle,$ $ \cdots , \langle a_n, b_n \rangle)$, we have that $t^{\scriptscriptstyle \textbf{S*}}(\langle a_1 , 0 \rangle, \cdots , \langle a_n, 0 \rangle)=t^{\scriptscriptstyle \textbf{S*}}(\langle a_1 , b_1 \rangle, \cdots ,$ $ \langle a_n, b_n \rangle) \neq s^{\scriptscriptstyle \textbf{S*}}(\langle a_1 , b_1 \rangle, \cdots , $ $\langle a_n, b_n \rangle) = s^{\scriptscriptstyle \textbf{S*}}(\langle a_1 , 0 \rangle, \cdots , \langle a_n, 0 \rangle)$ from Lemma \ref{L1}. Because $\langle a_1, 0 \rangle, \cdots , \langle a_n,$ $ 0 \rangle \in D^*$, we have $t^{\scriptscriptstyle \textbf{D*}}(\langle a_1 , 0 \rangle, \cdots , \langle a_n, 0 \rangle) \neq $ $ s^{\scriptscriptstyle \textbf{D*}}(\langle a_1 , 0 \rangle, \cdots , \langle a_n, 0 \rangle)$. This is a contradiction with $\textbf{D*} \models t \approx s$.

(2) Either $t$ or $s$ contains at least an occurrence of $\oplus$. Without loss of generality, we assume that $t$ contains at least an occurrence of $\oplus$ and $s$ does not contain an occurrence of $\oplus$.
Then $s$ is one of the following forms: $s$ is a variable $x_i$ and contains $k$ $(0 \leq k)$ occurrences of $-$, or $s$ is a constant contains $k$ $(0 \leq k)$ occurrences of $-$.
 If $s$ is the former, we can assign $x_i$ the value $\langle c_i,d_i \rangle \in D^*$ where $d_i \neq 0$ and the variables in $\{ x_1, \cdots, x_{i-1}, x_{i+1}, \cdots, x_n \}$ any values $\langle c_j,d_j \rangle(j=1,\cdots,i-1,i+1,\cdots,n) \in D^*$, then we have $t^{\scriptscriptstyle \textbf{D*}}(\langle c_1 , d_1 \rangle, \cdots , \langle c_n, d_n \rangle)$ $ = t^{\scriptscriptstyle \textbf{S*}}(\langle c_1 , d_1 \rangle, \cdots , \langle c_n, d_n \rangle) = t^{\scriptscriptstyle \textbf{S*}}(\langle c_1 , 0 \rangle, \cdots , \langle c_n, 0 \rangle) = \langle z,0 \rangle$ for some $z \in [-1,1]$ from Lemma \ref{L1}, while $s^{\scriptscriptstyle \textbf{D*}}(\langle c_1 , d_1 \rangle, \cdots , \langle c_n, d_n \rangle) = \langle c_i,d_i \rangle$ or $\langle -c_i, -d_i \rangle$. Note that $d_i \neq 0$, so $t^{\scriptscriptstyle \textbf{D*}}(\langle c_1 , d_1 \rangle,$ $ \cdots, \langle c_n, d_n \rangle) \neq s^{\scriptscriptstyle \textbf{D*}}(\langle c_1 , d_1 \rangle, \cdots,$ $ \langle c_n, d_n \rangle)$. This is a contradiction with $\textbf{D*} \models t \approx s$.
 If $s$ is the later, since all constants of $\textbf{S*}$ belong to $\textbf{D*}$, we have $s^{\scriptscriptstyle \textbf{D*}}=s^{\scriptscriptstyle \textbf{S*}}$. By the hypothesis and Lemma \ref{L1}, we have that $t^{\scriptscriptstyle \textbf{S*}}(\langle a_1 , 0 \rangle, \cdots , \langle a_n, 0 \rangle)=t^{\scriptscriptstyle \textbf{S*}}(\langle a_1 , b_1 \rangle, \cdots , \langle a_n, b_n \rangle) \neq s^{\scriptscriptstyle \textbf{S*}}(\langle a_1 , b_1 \rangle, \cdots , \langle a_n, b_n \rangle) = s^{\scriptscriptstyle \textbf{S*}}(\langle a_1 , 0 \rangle, \cdots , \langle a_n, 0 \rangle)$. Since $\langle a_1 , 0 \rangle, \cdots , \langle a_n, 0 \rangle \in D^*$, we have $t^{\scriptscriptstyle \textbf{D*}}(\langle a_1 , $ $0 \rangle, \cdots , \langle a_n, 0 \rangle) $ $\neq s^{\scriptscriptstyle \textbf{D*}}(\langle a_1 , 0 \rangle, \cdots , \langle a_n, 0 \rangle)$. This is a contradiction with $\textbf{D*} \models t \approx s$.

(3) Neither $t$ nor $s$ contains any occurrence of $\oplus$, then $t$ and $s$ contain at most one variable, it turns out that they have one of the following forms:
(3.1) $t$ is a constant and contains $k$ $(0\leq k)$ occurrences of $-$, $s$ is a constant and contains $l$ $(0\leq l)$ occurrences of $-$. Since all constants of $\textbf{S*}$ belong to $\textbf{D*}$, we have $t^{\scriptscriptstyle \textbf{D*}} = t^{\scriptscriptstyle \textbf{S*}}$ and $s^{\scriptscriptstyle \textbf{D*}}= s^{\scriptscriptstyle \textbf{S*}}$. So if $t^{\scriptscriptstyle \textbf{S*}} \neq s^{\scriptscriptstyle \textbf{S*}}$, we have $t^{\scriptscriptstyle \textbf{D*}} \neq s^{\scriptscriptstyle \textbf{D*}}$ which is a contradiction with $\textbf{D*} \models t \approx s$.
(3.2) $t$ is a variable $x$ and contains $k$ $(0\leq k)$ occurrences of $-$, $s$ is a constant and contains $l$ $(0\leq l)$ occurrences of $-$, then $s^{\scriptscriptstyle \textbf{D*}} = s^{\scriptscriptstyle \textbf{S*}} = (-)^{l}0$ or $(-)^{l}1$. We assign $x$ the value $\langle c,d \rangle \in D^*$ where $d \neq 0$, then we have $t^{\scriptscriptstyle \textbf{D*}}(\langle c,d \rangle) \neq s^{\scriptscriptstyle \textbf{D*}}(\langle c,d \rangle)$. This is a contradiction with $\textbf{D*} \models t \approx s$.
(3.3) $t$ is a variable $x$ and contains $k$ $(0\leq k)$ occurrences of $-$, $s$ is a variable $y$ and contains $l$ $(0\leq l)$ occurrences of $-$. Then we can assign $x$ the value $\langle 0,0 \rangle$ and assign $y$ the value $\langle 0,0 \rangle \neq \langle c,d \rangle \in D^*$, so $t^{\scriptscriptstyle \textbf{D*}}(\langle 0,0 \rangle) \neq s^{\scriptscriptstyle \textbf{D*}}( \langle c,d \rangle)$. This is a contradiction with $\textbf{D*} \models t \approx s$.
(3.4) $t$ is a variable $x$ and contains $k$ $(0\leq k)$ occurrences of $-$, $s$ is a variable $x$ and contains $l$ $(0\leq l)$ occurrences of $-$. If $k$ and $l$ have same parity, then $t^{\scriptscriptstyle \textbf{S*}} = s^{\scriptscriptstyle \textbf{S*}}$, against the hypothesis. If $k$ and $l$ have different parity, we can assign $x$ the value $\langle 0,0 \rangle \neq \langle c,d \rangle \in D^*$, it turns out that $t^{\scriptscriptstyle \textbf{D*}}(\langle c,d \rangle) \neq s^{\scriptscriptstyle \textbf{D*}}(\langle c,d \rangle)$.  This is a contradiction with $\textbf{D*} \models t \approx s$.

Thus, if $\textbf{D*} \models t \approx s$, then we have $\textbf{S*} \models t \approx s$.
\end{proof}

\begin{coro}
Let $t$ and $s$ be terms in the language of strong quasi-MV* algebras. Then we have that $\mathbb{SQMV}^\ast \models t \thickapprox s$ iff $\textbf{D*} \models t \thickapprox s$.
\end{coro}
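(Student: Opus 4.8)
The plan is to obtain this corollary purely by composing the two biconditionals already established in this section, so essentially no new work is required. Specifically, Theorem~\ref{05} gives that $\mathbb{SQMV}^\ast \models t \thickapprox s$ if and only if $\textbf{S*} \models t \thickapprox s$, and Proposition~\ref{06} gives that $\textbf{S*} \models t \thickapprox s$ if and only if $\textbf{D*} \models t \thickapprox s$. Since ``if and only if'' is transitive, chaining these two equivalences immediately yields $\mathbb{SQMV}^\ast \models t \thickapprox s$ if and only if $\textbf{D*} \models t \thickapprox s$, which is exactly the assertion of the corollary.

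Concretely, I would write: assume $\mathbb{SQMV}^\ast \models t \thickapprox s$; by Theorem~\ref{05} this gives $\textbf{S*} \models t \thickapprox s$, and then by Proposition~\ref{06} we get $\textbf{D*} \models t \thickapprox s$. For the converse, assume $\textbf{D*} \models t \thickapprox s$; by Proposition~\ref{06} we obtain $\textbf{S*} \models t \thickapprox s$, and then by Theorem~\ref{05} we conclude $\mathbb{SQMV}^\ast \models t \thickapprox s$. No case analysis on the form of $t$ and $s$ is needed at this stage, because all of that combinatorial bookkeeping on occurrences of $\oplus$ and $-$ has already been carried out inside the proofs of Lemma~\ref{L1} and Proposition~\ref{06}.

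There is genuinely no main obstacle here: the mathematical content lives entirely in the preceding results (the embedding of Proposition~\ref{02}, the flat standard completeness of Proposition~\ref{04}, the MV* standard completeness of Lemma~\ref{03}, and the square-versus-disk comparison of Proposition~\ref{06}). The only thing to be careful about is citing the correct labels and making sure the direction of each implication is stated consistently, so that the reader sees the statement as a clean consequence of $\mathbb{SQMV}^\ast \models t \thickapprox s \Leftrightarrow \textbf{S*} \models t \thickapprox s \Leftrightarrow \textbf{D*} \models t \thickapprox s$. This also records the intended conclusion that the disk standard strong quasi-MV* algebra $\textbf{D*}$, which is the object directly relevant to complex fuzzy logic, generates the whole variety $\mathbb{SQMV}^\ast$ as an equational class.
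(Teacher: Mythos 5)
Your proof is correct and is exactly the paper's own argument: the paper proves this corollary by citing Theorem~\ref{05} and Proposition~\ref{06}, which is precisely the chaining of the two biconditionals $\mathbb{SQMV}^\ast \models t \thickapprox s \Leftrightarrow \textbf{S*} \models t \thickapprox s \Leftrightarrow \textbf{D*} \models t \thickapprox s$ that you spell out. Your version merely makes both directions explicit, which is fine.
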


\begin{proof}
Follows from Theorem \ref{05} and Proposition \ref{06}.
\end{proof}

In order to investigate the logic associated with strong quasi-MV* algebras, we introduce strong quasi-Wajsberg* algebras which are term equivalence to strong quasi-MV* algebras.

\begin{defn}
A quasi-Wajsberg* algebra $\textbf{S}=\langle S; \to, \neg, ^+, ^-, 1 \rangle$ is called a \emph{strong quasi-Wajsberg* algebra}, if it satisfies the equations $x^+ = (1\to 1)\to x^+ $ and $x^- = (1\to 1)\to x^- $ for any $x \in S$.
\end{defn}

In the following, we abbreviate a strong quasi-Wajsberg* algebra $\textbf{S}=\langle S; \to, \neg, ^+, ^-, 1 \rangle$ as $\textbf{S}$ and denote the variety of strong quasi-Wajsberg* algebras by $\mathbb{SQW^*}$.

\begin{rem}\label{remark2}
Let $\textbf{S}$ be a strong quasi-Wajsberg* algebra. Then for any $x \in S$, we can get that $x^+ = (x\to 1)\to 1$ and $x^- = (x\to \neg 1)\to \neg 1$ from (QW*5).
\end{rem}

\begin{prop}\label{p3.2}
Let $\mathbf{Q}$ be a strong quasi-MV* algebra. If for any $x,y\in Q$, we define $x\to y=-x\oplus y$ and $\neg x=-x$, then $f(\mathbf{Q})=\langle Q;\to,\neg,^{+}, ^{-}, 1 \rangle$ is a strong quasi-Wajsberg* algebra, where $x\vee y = ((x^{+}\to y^{+})^{+}\to(\neg x)^{-}) \to((y^{-}\to x^{-})^{-}\to x^{-})$.
\end{prop}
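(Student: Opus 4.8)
The plan is to leverage Proposition~\ref{corr1}, which already establishes that if $\mathbf{Q}$ is a quasi-MV* algebra then $f(\mathbf{Q})$ is a quasi-Wajsberg* algebra. Since a strong quasi-MV* algebra is in particular a quasi-MV* algebra, Proposition~\ref{corr1} immediately yields that $f(\mathbf{Q}) = \langle Q; \to, \neg, {}^{+}, {}^{-}, 1 \rangle$ satisfies all of (QW*1)--(QW*12), with the $\vee$ operation being exactly the one supplied there. Thus the only thing left to verify is that $f(\mathbf{Q})$ is \emph{strong}, i.e.\ that it satisfies the two additional equations $x^{+} = (1\to 1)\to x^{+}$ and $x^{-} = (1\to 1)\to x^{-}$ for all $x \in Q$.

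First I would compute the element $1\to 1$ in the translated language. By definition $1\to 1 = -1\oplus 1$, and by (QMV*1) together with (QMV*8) we obtain $-1\oplus 1 = 1\oplus(-1) = 0$, so $1\to 1 = 0$. Next I would unfold $(1\to 1)\to x^{+} = 0\to x^{+} = -0\oplus x^{+}$; using (QMV*7), namely $-0 = 0$, together with (QMV*1), this equals $0\oplus x^{+} = x^{+}\oplus 0$. At this point the hypothesis that $\mathbf{Q}$ is a strong quasi-MV* algebra enters: by Definition~\ref{sqmv} we have $x^{+}\oplus 0 = x^{+}$, whence $(1\to 1)\to x^{+} = x^{+}$, the first required equation. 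The argument for $x^{-}$ is identical, replacing $x^{+}$ by $x^{-}$ throughout and invoking the companion identity $x^{-}\oplus 0 = x^{-}$ from Definition~\ref{sqmv}.

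There is no genuine obstacle here, since the heavy structural work is carried out by Proposition~\ref{corr1}; the only point requiring care is the correct translation between the two signatures, namely establishing $1\to 1 = 0$ and $-0 = 0$ and then reducing the two strongness equations to the defining identities $x^{+} = x^{+}\oplus 0$ and $x^{-} = x^{-}\oplus 0$ of a strong quasi-MV* algebra. Once these elementary reductions are recorded, the proof is complete.
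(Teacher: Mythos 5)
Your proposal is correct and follows essentially the same route as the paper: invoke Proposition~\ref{corr1} to obtain the quasi-Wajsberg* structure, then verify the two strongness equations $x^{+}=(1\to 1)\to x^{+}$ and $x^{-}=(1\to 1)\to x^{-}$. The only (harmless) difference is in the verification step: you compute $1\to 1=0$ and $-0=0$ directly in the quasi-MV* signature and reduce to the defining identity $x^{+}\oplus 0=x^{+}$, whereas the paper starts from $x^{+}=1\oplus(-1\oplus x)$ (Remark~\ref{remark1}) and rewrites it through the quasi-Wajsberg* identities (QW*5), (QW*7) and Proposition~\ref{lox1}(1) to reach $(1\to 1)\to x^{+}$.
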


\begin{proof}
By Proposition \ref{corr1}, we have that $f(\mathbf{Q})=\langle Q;\to,\neg,^{+}, ^{-}, 1 \rangle$ is a quasi-Wajsberg* algebra. Since $\mathbf{Q}$ is a strong quasi-MV* algebra, we have for any $x\in Q$, $x^+ = 1 \oplus (-1 \oplus x) = \neg 1 \to (1\to x) = \neg(1\to x)\to 1=(x\to 1)\to 1 = (1\to 1)\to x^+$ and $x^- = -1\oplus (1\oplus x) = 1 \to (\neg 1 \to x) = (\neg \neg )1 \to (\neg 1 \to x)= \neg(\neg 1 \to x) \to (\neg 1) = (x\to \neg 1)\to (\neg 1) =(1\to 1)\to x^-$ by Remark \ref{remark1}, Proposition \ref{lox1}(1), (QW*7), (QW*8), and (QW*5). Hence $f(\mathbf{Q})=\langle Q;\to,\neg,^{+}, ^{-}, 1 \rangle$ is a strong quasi-Wajsberg* algebra.
\end{proof}

\begin{prop}\label{p3.3}
Let $\mathbf{S}$ be a strong quasi-Wajsberg* algebra. If for any $x,y\in S$, we define $0=x \to x$, $x\oplus y=\neg x\to y$ and $-x=\neg x$, then $g(\mathbf{S})=\langle S;\oplus,{-},^{+}, ^{-},0,1\rangle$ is a strong quasi-MV* algebra, where $x\vee y = (x^{+}\oplus(-x^{+} \oplus y^{+})^{+})\oplus(x^{-}\oplus (-x^{-}\oplus y^{-})^{+})$.
\end{prop}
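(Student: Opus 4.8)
The plan is to build on Proposition \ref{corr2}, which already guarantees that $g(\mathbf{S}) = \langle S; \oplus, {-}, ^+, ^-, 0, 1\rangle$ is a quasi-MV* algebra under the stated definitions $0 = x\to x$, $x\oplus y = \neg x \to y$ and $-x = \neg x$. Thus all of (QMV*1)--(QMV*14) hold automatically, and the only thing left to establish is that the two extra \emph{strong} equations $x^+ = x^+ \oplus 0$ and $x^- = x^- \oplus 0$ are satisfied for every $x\in S$. By Definition \ref{sqmv} this is exactly what it takes to upgrade $g(\mathbf{S})$ from a quasi-MV* algebra to a strong quasi-MV* algebra, and it is precisely the dual of Proposition \ref{p3.2}, where the strong quasi-MV* hypothesis was used to recover the strong quasi-Wajsberg* identities; here we run the translation in the opposite direction.

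To verify $x^+ = x^+ \oplus 0$, I would first unfold the definition of $\oplus$ to get $x^+ \oplus 0 = \neg(x^+) \to 0$, and then apply the identity $\neg a \to b = \neg b \to a$ from Proposition \ref{lox1}(1) to rewrite this as $\neg 0 \to x^+$. The next small step is to observe that $0 = 1\to 1$ by Proposition \ref{lox1}(2), so that (QW*7) gives $\neg 0 = \neg(1\to 1) = 1\to 1 = 0$. Hence $x^+ \oplus 0 = 0 \to x^+ = (1\to 1)\to x^+$, and the defining equation of a strong quasi-Wajsberg* algebra, $x^+ = (1\to 1)\to x^+$, yields $x^+ \oplus 0 = x^+$ as required.

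The verification of $x^- = x^- \oplus 0$ proceeds along exactly the same lines, replacing the strong identity for $^+$ by the one for $^-$, namely $x^- = (1\to 1)\to x^-$; no new ideas are involved. With both strong equations confirmed, $g(\mathbf{S})$ is a quasi-MV* algebra satisfying the two conditions of Definition \ref{sqmv}, hence a strong quasi-MV* algebra.

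I do not expect any serious obstacle in this argument: the content is entirely a bookkeeping translation between the two signatures. The only point that requires a moment's care is getting the direction of the identity in Proposition \ref{lox1}(1) right and recognizing that $\neg 0 = 0$, after which the strong quasi-Wajsberg* axiom does all the work. As an alternative one could instead appeal directly to Theorem \ref{T1} together with Proposition \ref{p3.2} to transfer the strong condition across the inverse correspondence $f$, $g$, but the direct computation above is shorter and self-contained.
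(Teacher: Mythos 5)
Your proof is correct, and it follows the paper's decomposition exactly: invoke Proposition \ref{corr2} to conclude that $g(\mathbf{S})$ is a quasi-MV* algebra, then verify the two strong equations $x^+ = x^+\oplus 0$ and $x^- = x^-\oplus 0$ required by Definition \ref{sqmv}. The only divergence is in how those equations are checked. The paper starts from $x^+$, rewrites it as $(x\to 1)\to 1$ via Remark \ref{remark2} (which packages the strong axiom through (QW*5)), translates into the $\oplus$ signature, and recognizes the result as $x^+\oplus 0$ using (QMV*9), (QMV*1) and (QMV*5). You instead start from $x^+\oplus 0$, stay entirely in the $\to,\neg$ signature, and reduce it to $(1\to 1)\to x^+$ via Proposition \ref{lox1}(1) and the observation $\neg 0 = \neg(1\to 1) = 1\to 1 = 0$ from (QW*7), after which the strong quasi-Wajsberg* axiom closes the argument. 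Your computation is slightly more self-contained, since it never needs (QMV*5) or Remark \ref{remark2}; the paper's version is the exact mirror of its proof of Proposition \ref{p3.2}, which keeps the two directions of the term equivalence visibly symmetric. One caution about your closing aside: appealing to Theorem \ref{T1} together with Proposition \ref{p3.2} would not immediately work, because Proposition \ref{p3.2} gives the implication ``$\mathbf{Q}$ strong $\Rightarrow f(\mathbf{Q})$ strong,'' whereas transferring strength from $\mathbf{S} = fg(\mathbf{S})$ down to $g(\mathbf{S})$ requires the converse reflection, which needs its own (short) computation; your direct argument is the right choice.
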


\begin{proof}
By Proposition \ref{corr2}, we have that $g(\mathbf{S})=\langle S;\oplus,{-},^{+}, ^{-},0,1\rangle$ is a quasi-MV* algebra. Since $\mathbf{S}$ is a strong quasi-Wajsberg* algebra, we have for any $x\in S$, $x^+ = (x\to 1)\to 1 = -(-x\oplus 1)\oplus 1 =1\oplus (-1\oplus x)= x^+ \oplus 0$ and $x^- = (x\to \neg 1)\to \neg 1 = -(-x\oplus (-1))\oplus (-1) = -1 \oplus (1\oplus x) = x^- \oplus 0$ by Remark \ref{remark2}, (QMV*1), (QMV*9), and (QMV*5). Hence $g(\mathbf{S})=\langle S;\oplus,{-},^{+}, ^{-},0,1\rangle$ is a strong quasi-MV* algebra.
\end{proof}

\begin{thm}\label{th2}
For any strong quasi-MV* algebra $\textbf{Q}$ and strong quasi-Wajsberg* algebra $\textbf{S}$, we have

\emph{(1)} $gf(\textbf{Q})=\textbf{Q}$,

\emph{(2)} $fg(\textbf{S})=\textbf{S}$.

So $f$ and $g$ are mutually inverse correspondence.
\end{thm}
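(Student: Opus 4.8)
The plan is to observe that the maps $f$ and $g$ of Propositions \ref{p3.2} and \ref{p3.3} are given by \emph{exactly the same term-defined formulas} as the maps of Propositions \ref{corr1} and \ref{corr2}; the only difference is that their domains have been restricted to the strong subvarieties, and Propositions \ref{p3.2} and \ref{p3.3} have already verified that the images again lie in these strong subvarieties. Consequently both composition identities will be inherited verbatim from Theorem \ref{T1}, and it suffices to record the short direct computations exhibiting the agreement of operations.

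For part (1), I would start with a strong quasi-MV* algebra $\textbf{Q}$, form $f(\textbf{Q})$, and then compute the operations of $gf(\textbf{Q})$ from the defining equations of $g$. The zero satisfies $x\to x = -x\oplus x = x\oplus(-x)=0$ by (QMV*1) and (QMV*8); the sum satisfies $\neg x\to y = -(-x)\oplus y = x\oplus y$ by (QMV*10); the negation satisfies $\neg x = -x$; and the operations $^+$, $^-$ together with the constant $1$ are left untouched by both $f$ and $g$. Hence every operation of $gf(\textbf{Q})$ coincides with the corresponding operation of $\textbf{Q}$, so $gf(\textbf{Q})=\textbf{Q}$. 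Equivalently, since $\textbf{Q}$ is in particular a quasi-MV* algebra and the defining formulas coincide, this is just Theorem \ref{T1}(1).

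Part (2) is the symmetric computation. Beginning with a strong quasi-Wajsberg* algebra $\textbf{S}$, I would form $g(\textbf{S})$ and then read off the operations of $fg(\textbf{S})$. The implication satisfies $-x\oplus y = \neg(-x)\to y = \neg\neg x\to y = x\to y$ by (QW*8); the negation satisfies $-x=\neg x$; and again $^+$, $^-$, $1$ are unchanged. Thus $fg(\textbf{S})=\textbf{S}$, which is likewise the content of Theorem \ref{T1}(2) read inside the strong subvariety.

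There is no genuine obstacle here: the entire statement is the strong-subvariety shadow of Theorem \ref{T1}. The only points requiring care are the two already supplied, namely that the defining formulas of $f$ and $g$ in the strong setting are literally those of the general setting, and that Propositions \ref{p3.2} and \ref{p3.3} guarantee the images remain within $\mathbb{SQW^\ast}$ and $\mathbb{SQMV^\ast}$ respectively, so that the compositions are well-typed as maps of strong algebras. Granting these, the composition identities follow immediately.
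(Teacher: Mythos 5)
Your proposal is correct and matches the paper's (implicit) argument: the paper states Theorem \ref{th2} without proof precisely because, as you observe, $f$ and $g$ are the same term-defined maps as in Theorem \ref{T1}, and Propositions \ref{p3.2} and \ref{p3.3} guarantee they restrict to the strong subvarieties, so the composition identities are inherited. Your explicit verifications (e.g.\ $x\to x = x\oplus(-x)=0$ and $\neg x\to y = -(-x)\oplus y = x\oplus y$) are accurate and simply fill in details the paper leaves to the reader.
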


\begin{coro}
Let \textbf{S} be a strong quasi-Wajsberg* algebra. Then there exist a Wajsberg* algebra \textbf{M} and a flat strong quasi-Wajsbeg* algebra \textbf{FW*} such that \textbf{S} can be embedded into the direct product $\textbf{M} \times \textbf{FW*}$.
\end{coro}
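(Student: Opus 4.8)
The plan is to transport the decomposition of strong quasi-MV* algebras given in Proposition \ref{02} across the term equivalence of Theorem \ref{th2}. The guiding observation is that the correspondences $f$ and $g$ leave the carrier set untouched and merely reinterpret each family of operations as terms built from the other; consequently every structural notion on the strong quasi-MV* side (subalgebra, homomorphism, embedding, direct product) has an exact counterpart on the strong quasi-Wajsberg* side, and a strong quasi-MV* embedding is literally the same map as the corresponding strong quasi-Wajsberg* embedding.

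First I would pass to the strong quasi-MV* world. Given a strong quasi-Wajsberg* algebra $\textbf{S}$, Proposition \ref{p3.3} yields that $g(\textbf{S})$ is a strong quasi-MV* algebra. Applying Proposition \ref{02} to $g(\textbf{S})$, I obtain an MV*-algebra $\textbf{B} = g(\textbf{S})/\mu$ and a flat strong quasi-MV* algebra $\textbf{F} = g(\textbf{S})/\tau$, together with an embedding $h\colon g(\textbf{S}) \to \textbf{B}\times\textbf{F}$ of strong quasi-MV* algebras.

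Next I would pull everything back through $f$. Set $\textbf{M} = f(\textbf{B})$ and $\textbf{FW*} = f(\textbf{F})$. Since $\textbf{B}$ is an MV*-algebra, and MV*-algebras are term equivalent to Wajsberg* algebras under the very equations $x\to y = -x\oplus y$ and $\neg x = -x$ recalled in Section 2, $\textbf{M}$ is a Wajsberg* algebra. Since $\textbf{F}$ is a strong quasi-MV* algebra, Proposition \ref{p3.2} makes $\textbf{FW*}$ a strong quasi-Wajsberg* algebra; moreover $f$ fixes the carrier and the constant $1$ and carries the strong quasi-MV* zero to the strong quasi-Wajsberg* zero $\textbf{0} = 1\to 1$, so the single flatness equation $\textbf{0} = 1$ transfers, and $\textbf{F}$ flat forces $\textbf{FW*}$ flat. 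Finally, because $f$ is defined by terms applied coordinate-wise, it commutes with direct products, whence $f(\textbf{B}\times\textbf{F}) = \textbf{M}\times\textbf{FW*}$.

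It then remains to see that $h$, read as a map between the Wajsberg*-type reducts, is still an embedding of $\textbf{S} = fg(\textbf{S})$ into $\textbf{M}\times\textbf{FW*}$, using $fg(\textbf{S}) = \textbf{S}$ from Theorem \ref{th2}(2). Here I would invoke that $\to,\neg$ are terms in $\oplus,-$ and conversely; since $h$ preserves all term operations and is injective on the common carrier, preservation of $\oplus$ and $-$ is equivalent to preservation of $\to$ and $\neg$, so $h$ is a strong quasi-Wajsberg* embedding. This coincidence of the two homomorphism notions across the signatures is the only genuine obstacle, and it rests entirely on $f$ and $g$ being mutually inverse interdefinable correspondences; everything else is a routine bookkeeping translation of Proposition \ref{02}.
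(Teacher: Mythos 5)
Your proposal is correct and follows exactly the route the paper intends: the paper's proof is the one-line statement ``Follows from Theorem \ref{th2} and Proposition \ref{02},'' i.e., transport the embedding $\textbf{Q} \hookrightarrow \textbf{Q}/\mu \times \textbf{Q}/\tau$ of Proposition \ref{02} across the term equivalence $f$, $g$ of Propositions \ref{p3.2}--\ref{p3.3} and Theorem \ref{th2}. Your write-up simply makes explicit the bookkeeping (flatness transfers, $f$ commutes with products, term operations are preserved by the same underlying map) that the paper leaves to the reader.
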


\begin{proof} Follows from Theorem \ref{th2} and Proposition \ref{02}.
\end{proof}

\begin{exam}\label{exampleSW}
The algebra $\textbf{SW*} = \langle S^{*}, \to, \neg, ^+, ^-, \textbf{1} \rangle$ is a strong quasi-Wajsberg* algebra, where for any $\langle a,b \rangle, \langle c,d \rangle \in S^{*}$,

$\langle a,b \rangle \to \langle c,d \rangle = \langle \max\{-1, \min\{ 1, c-a\}\}, 0 \rangle$,

$\neg \langle a,b \rangle = \langle -a, -b \rangle$,

$\langle a,b \rangle ^+ = \langle \max\{ 0,a \}, 0 \rangle$,

$\langle a,b \rangle ^- = \langle \min\{ 0,a \}, 0 \rangle$,

$\textbf{1}= \langle 1,0 \rangle$.

We call $\textbf{SW*}$ the \emph{square standard strong quasi-Wajsberg* algebra}.
Especially,  $\textbf{DW*}=\langle D^*, \to_{\scriptscriptstyle DW^*}, \neg_{\scriptscriptstyle DW^*}, ^{+_{DW^*}}, ^{-_{DW^*}},$ $ \textbf{1} \rangle$ is the subalgebra of \textbf{SW*}, where the operations $\to_{\scriptscriptstyle DW^*}$, $\neg_{\scriptscriptstyle DW^*}$, $^{+_{DW^*}}$ and $^{-_{DW^*}}$ are those of $\textbf{SW*}$ restricted to $D^*$. We call $\textbf{DW*}$ the \emph{disk standard strong quasi-Wajsberg* algebra}.
\end{exam}

Based on the completeness of strong quasi-MV* algebras, we have that $\mathbb{SQW^*}\models t\approx s$ iff $\textbf{SW*}\models t\approx s$.

In the end of this section, we focus on the relationship between strong quasi-Wajsberg* algebras and Wajsberg* algebras.

\begin{defn}
Let $t$ be a term in the language of strong quasi-Wajsberg* algebra. Then $t$ is called \emph{regular}, if it contains $\to$ or $1$.
\end{defn}

We notice that non-regular terms are exactly the ones belonging to the set $\{ \neg^{n} p: p$ is a variable and $0 \leq n \}$.

\begin{prop}\label{logic4}
Let $t$ be a regular term and $r$ be an arbitrary term in the language of strong quasi-Wajsberg* algebra. Then the equation $(r\to r)\to t \approx t$ is valid in all strong quasi-Wajsberg* algebras.
\end{prop}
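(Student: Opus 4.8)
The plan is to recast the desired identity as a statement about the set $R(W) = \{x \in W \mid \mathbf{0}\to x = x\}$, where $\mathbf{0} = 1\to 1$, and then to prove it by induction on the structure of the regular term $t$. First I would note that by Proposition \ref{lox1}(2) every instance $r\to r$ equals $1\to 1 = \mathbf{0}$ under any assignment, so the claimed equation $(r\to r)\to t \approx t$ is literally $\mathbf{0}\to t \approx t$. Hence it suffices to show that, under any assignment of the variables, the value of a regular term lies in $R(W)$.

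Second, I would record the closure properties of $R(W)$ that follow at once from the axioms. We have $1\in R(W)$, since $\mathbf{0}\to 1 = (1\to 1)\to 1 = 1$ by (QW*3); every value $x\to y$ lies in $R(W)$ by (QW*4); and every $x^+$ and $x^-$ lies in $R(W)$ because the defining equations of a strong quasi-Wajsberg* algebra say precisely that $x^+ = (1\to 1)\to x^+ = \mathbf{0}\to x^+$ and $x^- = \mathbf{0}\to x^-$. The only step needing a short computation is that $R(W)$ is closed under $\neg$. Here I would first observe $\neg\mathbf{0} = \neg(1\to 1) = 1\to 1 = \mathbf{0}$ by (QW*7), and then derive the identity
\[
\mathbf{0}\to \neg x \;=\; \neg\neg x \to \neg\mathbf{0} \;=\; x\to\mathbf{0} \;=\; \neg(\mathbf{0}\to x),
\]
using (QW*1) in the first equality, (QW*8) together with $\neg\mathbf{0}=\mathbf{0}$ in the second, and (QW*7) in the third. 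Consequently, if $\mathbf{0}\to x = x$ then $\mathbf{0}\to\neg x = \neg(\mathbf{0}\to x) = \neg x$, so $\neg x\in R(W)$.

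Finally, I would finish by structural induction on the regular term $t$. The outermost symbol of $t$ is either the constant $1$, an implication $\to$, one of the unary operations $^+$, $^-$, or a negation $\neg$; in the first four cases the value of $t$ belongs to $R(W)$ by the facts collected above, with no appeal to the induction hypothesis. In the remaining case $t = \neg s$, regularity of $t$ forces regularity of $s$ (a leading $\neg$ neither creates nor destroys an occurrence of $\to$ or $1$), so the induction hypothesis gives that the value of $s$ lies in $R(W)$, and closure under $\neg$ then places the value of $t$ in $R(W)$ as well. The non-regular terms, which are exactly the $\neg^{n}p$, never occur under the hypothesis that $t$ is regular, so the induction goes through. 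I expect the closure of $R(W)$ under $\neg$—that is, the identity $\mathbf{0}\to\neg x = \neg(\mathbf{0}\to x)$ resting on $\neg\mathbf{0}=\mathbf{0}$—to be the only non-immediate point; everything else is bookkeeping over the term structure.
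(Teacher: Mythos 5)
Your proof is correct, and it takes a genuinely different route from the paper's. The paper argues by a syntactic normal form: since non-regular terms are exactly the $\neg^n p$, a regular term is either $\neg^n 1$ or $\neg^n(t_1\to t_2)$ (with $^+$, $^-$ tacitly rewritten via $x^+ = (x\to 1)\to 1$, $x^- = (x\to\neg 1)\to\neg 1$), and it then disposes of the first case by citing Proposition 3.1 of the earlier quasi-Wajsberg* paper and of the second by (QW*4) together with Proposition \ref{lox1}(3), using $\neg\neg x = x$ to reduce the stack of negations to at most one. Your argument instead reformulates the claim as ``every regular term takes values in $R(W)$'' and runs a structural induction on $t$, resting on closure of $R(W)$ under $1$, $\to$, $^+$, $^-$, and $\neg$; your key lemma is the commutation $\mathbf{0}\to\neg x = \neg(\mathbf{0}\to x)$ (from (QW*1), (QW*7), (QW*8) and $\neg\mathbf{0}=\mathbf{0}$), where the paper's Proposition \ref{lox1}(3) instead exploits the special shape of its argument, turning $\neg(x\to y)$ into the implication $y\to x$ so that (QW*4) applies directly. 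What your route buys: it is self-contained (no appeal to the external reference for the $\neg^n 1$ case), it treats $^+$ and $^-$ as primitive operations rather than presupposing their elimination, and closure of $R(W)$ under $\neg$ is a reusable fact of independent interest. What the paper's route buys: given the lemmas already on hand and the explicit description of non-regular terms, it is shorter, replacing the induction by a two-case inspection of the normal form.
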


\begin{proof}
Let $t$ be a regular term.  If $t= \neg^n 1$ for some $0\leq n$, then we have $(r\to r)\to t \approx t$ from  Proposition 3.1 in \cite{8}.
If $t = \neg^n (t_1 \to t_2)$ where $t_1, t_2$ are terms and $0\leq n$, then we have $(r\to r)\to t \approx t$ by (QW*4) and Proposition \ref{lox1}(3).
Thus, if $t$ is regular, then $(r\to r)\to t \approx t$ is valid in all strong quasi-Wajsberg* algebras.
\end{proof}

\begin{prop}\label{logic2}
Let $t_1$ and $t_2$ be regular terms in the language of strong quasi-Wajsberg* algebra. Then $t_1\approx t_2$ holds in all strong quasi-Wajsberg* algebras iff it holds in all Wajsberg* algebras.
\end{prop}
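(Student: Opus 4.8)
The plan is to prove the two implications separately. The forward implication $\mathbb{SQW^*}\models t_1\approx t_2\Rightarrow\mathbb{W^*}\models t_1\approx t_2$ is the routine one: I would observe that every Wajsberg* algebra $\textbf{M}=\langle M;\to,\neg,1\rangle$, once equipped with the derived operations $x^+=(x\to 1)\to 1$ and $x^-=(x\to\neg 1)\to\neg 1$ of Remark~\ref{remark2}, is itself a strong quasi-Wajsberg* algebra. Indeed the quasi-Wajsberg* axioms reduce to the Wajsberg* ones once $(y\to y)\to x=x$ (that is, \textnormal{(W*4)}) is available, and the same axiom forces $(1\to 1)\to x^+=x^+$ and $(1\to 1)\to x^-=x^-$, which is exactly the strong condition. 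Hence any equation holding in all strong quasi-Wajsberg* algebras, with $^+,^-$ read via their definitions, holds in $\textbf{M}$. I note that this implication requires no regularity hypothesis.

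For the converse, let $\textbf{S}$ be an arbitrary strong quasi-Wajsberg* algebra and assume $\mathbb{W^*}\models t_1\approx t_2$. By the Corollary following Theorem~\ref{th2} (obtained by transporting Proposition~\ref{02} along the mutually inverse $f,g$), $\textbf{S}$ embeds into a direct product $\textbf{M}\times\textbf{FW*}$ with $\textbf{M}$ a Wajsberg* algebra and $\textbf{FW*}$ a flat strong quasi-Wajsberg* algebra. Since identities are preserved by subalgebras and direct products, it suffices to verify $t_1\approx t_2$ separately in the two factors. In $\textbf{M}$ it holds by assumption, so the whole weight of the argument falls on the flat factor $\textbf{FW*}$, and this is exactly where regularity is used.

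In $\textbf{FW*}$ I would first note that its associated strong quasi-MV* algebra $g(\textbf{FW*})$ is flat, so by Lemma~\ref{f0} every sum $x\oplus y$ equals the zero $\mathbf{0}=x\to x$; translating through $x\to y=\neg x\oplus y$ and using flatness $\mathbf{0}=1$ yields $x\to y=\mathbf{0}=1$ for all $x,y$. Consequently $R(\textbf{FW*})=\{x\mid \mathbf{0}\to x=x\}=\{x\mid x=\mathbf{0}\}=\{1\}$. Now Proposition~\ref{logic4} guarantees that each regular term $t$ satisfies $(r\to r)\to t\approx t$, i.e.\ its value always lies in $R(\textbf{FW*})$; therefore, for every assignment, both $t_1$ and $t_2$ evaluate to $1$ in $\textbf{FW*}$, and $t_1\approx t_2$ holds there. (Alternatively one can induct on term structure, using $x^+=x^-=1$ from Remark~\ref{remark2} and $\neg 1=1$ from \textnormal{(QW*7)}, to show that every term other than $\neg^n p$ collapses to $1$.) Combining the two factors gives $\textbf{M}\times\textbf{FW*}\models t_1\approx t_2$, hence $\textbf{S}\models t_1\approx t_2$; as $\textbf{S}$ was arbitrary, $\mathbb{SQW^*}\models t_1\approx t_2$.

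The main obstacle is precisely this flat factor: one must recognize that regularity is exactly the syntactic guarantee that a term avoids the shape $\neg^n p$, the only shape on which the (possibly nontrivial) involution $\neg$ of a flat algebra can act freely, whereas any occurrence of $\to$, $1$, $^+$ or $^-$ drives the value into the one-element set $R(\textbf{FW*})=\{1\}$. Everything else — the embedding, and the preservation of identities under subalgebras and products — is standard bookkeeping once Propositions~\ref{02} and~\ref{logic4} and Lemma~\ref{f0} are in hand.
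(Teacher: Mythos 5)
Your proof is correct, and in the converse direction it takes a genuinely different route from the paper. Both arguments pivot on the same key fact — regular terms trivialize (evaluate to $1$) in every flat strong quasi-Wajsberg* algebra — but they exploit it differently. The paper's proof stays anchored to the standard models: it notes that $t_1\approx t_2\approx 1$ holds in all flat algebras, concludes that $t_1\approx t_2$ holds in the square standard algebra $\textbf{SW*}$ (being the product of the standard Wajsberg* algebra and the standard flat algebra), and then invokes the standard completeness theorem ($\mathbb{SQW^*}\models t\approx s$ iff $\textbf{SW*}\models t\approx s$, i.e.\ the translation of Theorem~\ref{05}) to pass to the whole variety. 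You instead take an arbitrary $\textbf{S}\in\mathbb{SQW^*}$, embed it into $\textbf{M}\times\textbf{FW*}$ via the Corollary to Theorem~\ref{th2} (the transported Proposition~\ref{02}), and check the identity factorwise, using preservation of identities under products and subalgebras. Your route is more self-contained: it bypasses the standard completeness apparatus (Proposition~\ref{04}, Lemma~\ref{03}, Theorem~\ref{05}) entirely and needs only the embedding theorem plus the flat-algebra computation, which you also spell out in more detail than the paper does (via Lemma~\ref{f0}, flatness $\mathbf{0}=1$, $R(\textbf{FW*})=\{1\}$, and Proposition~\ref{logic4}); the paper asserts the flat fact in a single line. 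What the paper's route buys in exchange is brevity given machinery already on hand, and the aesthetic point that everything is certified by the two concrete standard algebras; what yours buys is independence from the real-interval models and an argument that makes visible exactly where regularity enters, namely in forcing the value of a term into $R(\textbf{FW*})=\{1\}$ on the flat factor.
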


\begin{proof}
Suppose that $t_1\approx t_2$ holds in all strong quasi-Wajsberg* algebras. Since any Wajsberg* algebra is a strong quasi-Wajsberg* algebra, we have that $t_1\approx t_2$ holds in all Wajsberg* algebras.
Conversely, suppose that $t_1\approx t_2$ holds in all Wajsberg* algebras. Since $t_1\approx t_2 \approx 1$ is valid in all flat strong quasi-Wajsberg* algebras, we have that $t_1\approx t_2$ holds in the square standard strong quasi-Wajsberg* algebra $\textbf{SW*}$, so $t_1\approx t_2$ holds in all strong quasi-Wajsberg* algebras.
\end{proof}

\begin{coro}\label{logic3}
An equation $t_1 \approx (t_2 \to 1)\to 1$ holds in all strong quasi-Wajsberg* algebras iff it holds in all Wajsberg* algebras.
\end{coro}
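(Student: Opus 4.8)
The plan is to derive the corollary from Proposition \ref{logic2}, using the fact that the right-hand side $(t_2\to 1)\to 1$ is automatically a regular term, since it contains both $\to$ and $1$. The only obstruction to applying Proposition \ref{logic2} directly is that the left-hand side $t_1$ may fail to be regular; recall that, as noted after the definition of regularity, the non-regular terms are precisely those of the form $\neg^n p$ with $p$ a variable and $0\leq n$. Accordingly I would split the argument into two cases depending on whether $t_1$ is regular.

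If $t_1$ is regular, then $t_1$ and $(t_2\to 1)\to 1$ are both regular terms, and Proposition \ref{logic2} applied to this pair gives at once that $t_1 \approx (t_2\to 1)\to 1$ holds in all strong quasi-Wajsberg* algebras iff it holds in all Wajsberg* algebras. If $t_1$ is not regular, say $t_1 = \neg^n p$, then I would instead show that \emph{both} sides of the biconditional are false, so that it holds vacuously. To this end, consider the standard Wajsberg* algebra, i.e. the term-equivalent of $\textbf{MV*}_{[-1,1]}$. There $(t_2\to 1)\to 1$ equals the positive-part term $t_2^+$ (since $x^+$ is defined as $(x\to 1)\to 1$, cf. Remark \ref{remark2}) and hence evaluates to $\max\{0,\cdot\}\geq 0$ for every assignment, whereas $\neg^n p$ collapses to $p$ or $\neg p$ via $\neg\neg x = x$ and can be forced to the value $-1$ by setting $p=-1$ (if $n$ is even) or $p=1$ (if $n$ is odd). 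Thus the equation fails in this algebra. Since every Wajsberg* algebra is in particular a strong quasi-Wajsberg* algebra, this one counterexample simultaneously witnesses that $t_1\approx(t_2\to 1)\to 1$ holds neither in all Wajsberg* algebras nor in all strong quasi-Wajsberg* algebras, and the biconditional follows.

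The only genuinely delicate point is the non-regular case, and there the key observation to verify is that the lower bound $(t_2\to 1)\to 1 \geq 0$ in the standard algebra is completely insensitive to the value assigned to $p$, even when $p$ occurs inside $t_2$; this is what lets the negative value of $\neg^n p$ produce a contradiction no matter how $t_2$ is built. Everything else reduces to the already-established Proposition \ref{logic2} together with the elementary identity $\neg\neg x = x$ and the computation of $x^+=\max\{0,x\}$ in the standard algebra.
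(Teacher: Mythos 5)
Your proposal is correct and follows essentially the same route as the paper: split on whether $t_1$ is regular, apply Proposition \ref{logic2} in the regular case, and in the non-regular case $t_1=\neg^n p$ observe that both sides of the biconditional are false, so it holds vacuously. The only difference is that the paper merely asserts that $\neg^n p \approx (t_2\to 1)\to 1$ fails in some Wajsberg* algebra and some strong quasi-Wajsberg* algebra, whereas you exhibit the witness explicitly (the standard algebra on $[-1,1]$, where $(t_2\to 1)\to 1$ evaluates to $\max\{0,\cdot\}\geq 0$ while $\neg^n p$ can be forced to $-1$, and which serves as a counterexample in both varieties since every Wajsberg* algebra is a strong quasi-Wajsberg* algebra) --- a useful filling-in of a detail the paper leaves implicit.
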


\begin{proof}
If $t_1$ is regular, since $(t_2 \to 1)\to 1$ is regular, the conclusion is immediate to get from Proposition \ref{logic2}.
If $t_1$ is non-regular, then $t_1=\neg^n p$ where $p$ is a variable and $0\leq n$. Since $\neg^n p \approx (t_2 \to 1)\to 1$ fails in some Wajsberg* algebras and also fails in some strong quasi-Wajsberg* algebras, we have that $\neg^n p \approx (t_2 \to 1)\to 1$ holds in all strong quasi-Wajsberg* algebras iff it holds in all Wajsberg* algebras.
Hence we have that $t_1 \approx (t_2 \to 1)\to 1$ holds in all strong quasi-Wajsberg* algebras iff it holds in all Wajsberg* algebras.
\end{proof}

\begin{defn}\label{de}
Let $t_1, \cdots, t_n,t$ be terms in the language of strong quasi-Wajsberg* algebra, and let $c_1, \cdots, c_n,c$ be constants (which are added if it is not already present) denoting the designated elements in any algebra $\textbf{A}\in \mathbb{SQW^*}$. Then we introduce the \emph{logic} $\vDash _{\scriptscriptstyle \mathbb{SQW^*}}$:

\begin{center}
$t_1, \cdots, t_n \vDash _{\scriptscriptstyle \mathbb{SQW^*}} t$ iff $\mathbb{SQW^*} \models t_1 \approx (c_1 \to 1)\to 1 \& \cdots \&t_n \approx (c_n \to 1)\to 1 $ $\Rightarrow t \approx (c \to 1)\to 1$.
\end{center}
\end{defn}

Following from Definition \ref{de}, we have that $q_1, \cdots, q_n \vDash_{\scriptscriptstyle \mathbb{W^*}} q$ iff $\mathbb{W^*}\models q_1 \approx (c_1 \to 1)\to 1 \&\cdots \& q_n \approx (c_n \to 1)\to 1 \Rightarrow q \approx (c\to 1)\to 1$.
So, if $q_1, \cdots, q_n \vdash_{\scriptscriptstyle L^*} q$, then $q_1, \cdots, q_n \vDash_{\scriptscriptstyle \mathbb{W^*}} q$. Moreover, according to the completeness and soundness of $\L^*$ in \cite{2} and \cite{13}, the converse is also true, i.e., if $q_1, \cdots, q_n\vDash_{\scriptscriptstyle \mathbb{W^*}} q$, then $q_1, \cdots, q_n\vdash_{\scriptscriptstyle L^*} q$.

\begin{lem}\label{logic5}
Let $q_1,\cdots, q_n, q, p$ be terms in the language of strong quasi-Wajsberg* algebra. Then we have

 \emph{(1)} if $q_1,\cdots, q_n \vDash_{\scriptscriptstyle \mathbb{SQW}^{*}} q$, then $q_1,\cdots, q_n \vDash_{\scriptscriptstyle \mathbb{W}^{*}} q$,

 \emph{(2)} $q_1,\cdots, q_n \vDash_{\scriptscriptstyle \mathbb{W}^{*}} q$ iff $q_1,\cdots, q_n \vDash_{\scriptscriptstyle \mathbb{SQW}^{*}} (p \to p)\to q$,

 \emph{(3)} if $q$ is regular, then $q_1,\cdots, q_n \vDash_{\scriptscriptstyle \mathbb{W}^{*}} q$ iff $q_1,\cdots, q_n \vDash_{\scriptscriptstyle \mathbb{SQW}^{*}} q$.

\end{lem}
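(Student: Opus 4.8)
The plan is to read $t_1,\dots,t_n \vDash_{\scriptscriptstyle\mathbb K} t$ (for $\mathbb K\in\{\mathbb{W^*},\mathbb{SQW^*}\}$) in the way dictated by Definition~\ref{de}: for every $\textbf{A}\in\mathbb K$ and every assignment $e$ of the variables into $A$, if $e(t_i)\in P(A)$ for all $i$ then $e(t)\in P(A)$, where $P(A)=\{(c\to 1)\to 1\mid c\in A\}$. By Remark~\ref{remark2} we have $(c\to 1)\to 1=c^{+}$ in every strong quasi-Wajsberg* algebra, and the defining equation $x^{+}=(1\to 1)\to x^{+}$ shows $P(A)\subseteq R(A)$; this containment is the feature I will exploit. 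With this reading, part~(1) is immediate: every Wajsberg* algebra, with $^{+},^{-}$ read off as the terms $(x\to 1)\to 1$ and $(x\to\neg 1)\to\neg 1$, is a (regular) strong quasi-Wajsberg* algebra, so $\mathbb{W^*}\subseteq\mathbb{SQW^*}$ and $P$ is computed identically on these algebras; hence an implication valid throughout $\mathbb{SQW^*}$ restricts to one valid throughout $\mathbb{W^*}$. Part~(3) I will deduce from part~(2): when $q$ is regular, Proposition~\ref{logic4} gives $(p\to p)\to q\approx q$ in all of $\mathbb{SQW^*}$, so $e((p\to p)\to q)=e(q)$ always and the statements $q_1,\dots,q_n\vDash_{\scriptscriptstyle\mathbb{SQW^*}}(p\to p)\to q$ and $q_1,\dots,q_n\vDash_{\scriptscriptstyle\mathbb{SQW^*}}q$ coincide; together with (2) this is exactly (3).

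So the substance is part~(2). For the easy half, $q_1,\dots,q_n\vDash_{\scriptscriptstyle\mathbb{SQW^*}}(p\to p)\to q\Rightarrow q_1,\dots,q_n\vDash_{\scriptscriptstyle\mathbb{W^*}}q$, I would take $\textbf{M}\in\mathbb{W^*}$ and an assignment $e$ with $e(q_i)\in P(M)$ for each $i$. Viewing $\textbf{M}$ inside $\mathbb{SQW^*}$, the hypothesis yields $e\big((p\to p)\to q\big)\in P(M)$; but in a Wajsberg* algebra $(y\to y)\to x=x$ by (W*4), so $e((p\to p)\to q)=e(q)$ and therefore $e(q)\in P(M)$.

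The hard half is $q_1,\dots,q_n\vDash_{\scriptscriptstyle\mathbb{W^*}}q\Rightarrow q_1,\dots,q_n\vDash_{\scriptscriptstyle\mathbb{SQW^*}}(p\to p)\to q$, and the main obstacle is to pass from information modulo $\mu$ back to an exact identity in $\textbf{A}$. I would fix $\textbf{A}\in\mathbb{SQW^*}$ and an assignment $e$ with $e(q_i)\in P(A)$, write $a=e(q)$, and consider the canonical surjection $\pi\colon\textbf{A}\to\textbf{A}/\mu$ onto the MV*-algebra $\textbf{A}/\mu$ of Proposition~\ref{01}. Since $\pi$ is a homomorphism and $e(q_i)=c_i^{+}$ for suitable $c_i$, the induced assignment $\bar e=\pi\circ e$ satisfies $\bar e(q_i)=(\pi c_i)^{+}\in P(\textbf{A}/\mu)$; applying the hypothesis to the Wajsberg* algebra $\textbf{A}/\mu$ gives $\bar e(q)=\pi(a)\in P(\textbf{A}/\mu)$, i.e.\ $\langle a,c^{+}\rangle\in\mu$ for some $c\in A$. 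Translating through the term equivalence (Theorem~\ref{th2}, where $x\to y=-x\oplus y$ and $\mathbf 0=0$ with $-0=0$ by (QMV*7)), I have $e((p\to p)\to q)=\mathbf 0\to a=0\oplus a$, which lies in $R(A)$ by (QMV*4), while $c^{+}\in R(A)$ by the strong condition. The decisive calculation is $0\oplus a=c^{+}$: both are regular; $\langle 0\oplus a,a\rangle\in\mu$ and $\langle a,c^{+}\rangle\in\mu$ because $(x\oplus 0)/\mu=x/\mu$ follows from (MV*5) holding in $\textbf{A}/\mu$ (Proposition~\ref{01}(3)); and $\mu$ restricts to the identity on $R(A)$, since $R(A)$ is an MV*-algebra whose order $x\le y\iff x\vee y=y$ is antisymmetric and agrees with the order of $\textbf{A}$ on regular elements. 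Hence $0\oplus a=c^{+}\in P(A)$, i.e.\ $e((p\to p)\to q)\in P(A)$, completing the forward direction. I expect the transversality step—establishing $\mu\cap(R(A)\times R(A))=\Delta$ so as to upgrade a $\mu$-congruence into an equality in $R(A)$—to be the delicate point, the rest being bookkeeping with the two quotients and the term equivalence.
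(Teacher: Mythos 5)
Your proof is correct, and on parts (1), (3) and the ``$\vDash_{\mathbb{SQW^*}}(p\to p)\to q \Rightarrow \vDash_{\mathbb{W^*}}q$'' half of (2) it coincides with the paper's argument (restriction along $\mathbb{W^*}\subseteq\mathbb{SQW^*}$, the identity $(p\to p)\to q\approx q$ in Wajsberg* algebras, and Proposition~\ref{logic4}). Where you genuinely diverge is the hard direction of (2). The paper dispatches it in one sentence by invoking the subalgebra $R(A)$ of regular elements, which is a Wajsberg* algebra; implicitly this rests on the fact that in the \emph{strong} case the regularization map $x\mapsto (p\to p)\to x$ (equivalently $x\mapsto x\oplus 0$) is a homomorphic retraction of $\textbf{A}$ onto $R(\textbf{A})$ --- exactly where the equations $x^+=x^+\oplus 0$, $x^-=x^-\oplus 0$ enter --- and the paper leaves that verification to the reader. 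You instead pass to the quotient $\textbf{A}/\mu$ of Proposition~\ref{01}, apply the $\mathbb{W^*}$-hypothesis there, and pull the conclusion back along the projection using that $\mu$ separates regular elements. Both devices work (indeed $\textbf{A}/\mu\cong R(\textbf{A})$ for strong algebras); yours has the advantage of using only facts already recorded in the paper and of making the role of the strong condition explicit (regularity of $c^+$ and of $0\oplus a$), at the cost of some congruence bookkeeping. One simplification: the ``transversality'' step you flag as the delicate point is actually immediate. By Proposition~\ref{01}(2) we have $\mu\cap\tau=\Delta$, and any two elements of $R(A)$ are $\tau$-related, so $\mu$ restricted to $R(A)\times R(A)$ is the diagonal; alternatively, $\langle x,y\rangle\in\mu$ gives $x\oplus 0=y\vee x=x\vee y=y\oplus 0$ by (QMV*12), which for regular $x,y$ reads $x=y$. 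So no appeal to antisymmetry of the MV*-order, nor to the agreement of that order with the quasi-order of $\textbf{A}$ on regular elements, is needed.
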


\begin{proof}
(1) Suppose that $q_1,\cdots, q_n \vDash_{\scriptscriptstyle \mathbb{SQW}^{*}} q$. Since every Wajsberg* algebra is a strong quasi-Wajsberg* algebra, we have $q_1,\cdots, q_n \vDash_{\scriptscriptstyle \mathbb{W}^{*}} q$.

(2) Suppose that $q_1,\cdots, q_n \vDash_{\scriptscriptstyle \mathbb{SQW}^{*}} (p \to p)\to q$. Then we have $q_1,\cdots, q_n \vDash_{\scriptscriptstyle \mathbb{W}^{*}} (p \to p)\to q$ by (1). Since $(p \to p)\to q \approx q$ holds in all Wajsberg* algebras, we get that $q_1,\cdots, q_n \vDash_{\scriptscriptstyle \mathbb{W}^{*}} q$. Conversely, suppose that $q_1,\cdots, q_n \vDash_{\scriptscriptstyle \mathbb{W}^{*}} q$.  Since the subalgebra of regular elements of any strong quasi-Wajsberg* algebra is a Wajsberg* algebra, we have that $(p \to p)\to q \approx (c \to 1)\to 1$ is valid in all strong quasi-Wajsberg* algebras. So $ \mathbb{SQW}^{*}\models q_1 \approx (c_1 \to 1)\to 1 \& \cdots \&q_n \approx (c_n \to 1)\to 1 \Rightarrow (p \to p)\to q \approx (c \to 1)\to 1$, it turns out that $q_1,\cdots, q_n \vDash_{\scriptscriptstyle \mathbb{SQW}^{*}} (p \to p)\to q$.

(3) Follows from (2) and Proposition \ref{logic4}.
\end{proof}

\section{The logic associated with strong quasi-Wajsberg* algebras}

In this section, we study the logical system associated with strong quasi-Wajsberg* algebras and denote it by $\text{sq}\L^{*}$.

Let $V$ be a set of all propositional variables and $F(V)$ the formulas set generated by $V$ with logical connectives $\to$, $\neg$ and a constant $1$. Then $\langle F(V); \to, \neg,  1 \rangle$ is a free algebra.
For any $p, q\in F(V)$, the notation $p\leftrightarrow q$ stands for $p\to q$ and $q\to p$.

The axioms of $\text{sq}\L^{*}$ are defined as follows.

\noindent\textbf{Axioms schemas}

(Q1) $(p\to q)\leftrightarrow (\neg q\to \neg p)$,

(Q2) $1\leftrightarrow ((1\to p)\to 1)$,

(Q3) $p \leftrightarrow ((q\to q)\to p)$,

(Q4) $(p\to q)\leftrightarrow((q^{+}\to p^{-})\to (p^{+}\to q^{-}))$ where $p^+ = (p \to 1)\to 1 $ and $p^- = (p \to \neg 1)\to \neg 1 $,

(Q5) $\neg(p\to q)\leftrightarrow (q\to p)$,

(Q6) $(p\to (\neg p\to q))^+\leftrightarrow (p^+\to (\neg p^+\to q^+))$,

(Q7) $(p\to (q\vee r))\leftrightarrow((p\to r)\vee (p\to q))$ where $p\vee q = (((p^{+}\to q^{+})^{+})\to(\neg p)^{-} )\to((q^{-}\to p^{-})^{-}\to p^{-})$,

(Q8) $(p\vee (q\vee r))\leftrightarrow ((p\vee q)\vee r)$,

(Q9) $((p\to 1)\to((q\to 1)\to r))\to ((q\to 1)\to((p\to 1)\to r))$,

(Q10) $p\to 1$.

The deduction rules of $\text{sq}\L^{*}$ are as follows.

\noindent\textbf{Rules of deduction}

(qMP) $(r\to r) \to p, (r\to r)\to (p\to q)\vdash (r\to r)\to q$,

(Reg) $p\vdash (r\to r)\to p$,

(AReg1) $(r\to r)\to (p\to q) \vdash p\to q$,

(AReg2) $(r\to r)\to \neg(p\to q) \vdash \neg(p\to q)$,

(AReg3) $(r\to r)\to \neg 1 \vdash \neg 1$,

(AReg4) $(r\to r)\to 1 \vdash 1$,

(Inv1) $p \vdash \neg \neg p$,

(Inv2) $\neg \neg p \vdash p$,

(Flat) $p, \neg 1 \vdash \neg p$,

(R2$^{\prime}$) $p\to q, r\to t\vdash (q\to r)\to (p\to t)$,

(R3$^{\prime}$) $(r\to r) \to p\vdash p^-$.

\begin{defn}
Let $F(V)$ be the formulas set of $\text{sq}\L^{*}$. If $q_1,q_2,\cdots,q_n$ ($1\leq n$) is a sequence of formulas such that one of the following cases holds:

(1) $q_i(i\le n)$ is an axiom,

(2) $q_i(i\le n)$ is obtained by $q_j$ and $q_k$ for some $j,k<i$ with the Rules of deduction,

\noindent then the sequence $q_1,q_2,\cdots,q_n$ is called a \emph{proof} of $q_n$ and denoted by $\vdash_{\scriptscriptstyle sqL^*} q_n$, $q_n$ is called a \emph{theorem}. The set of theorems in \text{sq}$\L^{*}$ is denoted by \emph{Thm}$_{\scriptscriptstyle sqL^*}$.
\end{defn}

\begin{defn}
Let $\Gamma\subseteq F(V)$ be a subset of formulas of $\text{sq}\L^{*}$. If $q_1,q_2,\cdots,q_n$ ($1\leq n$) is a sequence of formulas such that one of the following cases holds:

(1) $q_i(i\le n)$ is an axiom,

(2) $q_i\in \Gamma(i\le n)$,

(3) $q_i(i\le n)$ is obtained by $q_j$ and $q_k$ for some $j,k<i$ with the Rules of deduction,

\noindent then the sequence $q_1,q_2,\cdots,q_n$ is called a \emph{proof from $\Gamma$ to $q_n$} and denoted by $\Gamma\vdash_{\scriptscriptstyle sqL^*} q_n$, $q_n$ is called the \emph{provable formula} from $\Gamma$. The set of all provable formulas from $\Gamma$ is denoted by ${\Gamma}^{\vdash}$.
\end{defn}

\begin{prop}\label{QL1}
Let $F(V)$ be the formulas set of \emph{sq$\L^{*}$}. Then the following hold for any $p, q, r, t\in F(V)$,

\emph{(1)} If $\vdash p\leftrightarrow q$, then $\vdash \neg p\leftrightarrow \neg q$,

\emph{(2)} If $\vdash p\leftrightarrow q$ and $\vdash r\leftrightarrow t$, then $\vdash (p\to r)\leftrightarrow(q\to t)$,

\emph{(3)} If $\vdash p\leftrightarrow q$ and $\vdash q\leftrightarrow r$, then $\vdash p\leftrightarrow r$,

\emph{(4)} $\vdash \neg (p\to q)\leftrightarrow (\neg p\to \neg q)$,

\emph{(5)} $\vdash p \rightarrow p$,

\emph{(6)} If $\vdash p_1\leftrightarrow r_1$, then $\vdash p\leftrightarrow r$, where $p_1$ is a subformula of $p$ and $r$ is obtained by replacing $p_1$ in $p$ with $r_1$,

\emph{(7)} $\vdash (p\to p)\leftrightarrow (q\to q)$,

\emph{(8)} $\vdash p \leftrightarrow \neg \neg p$,

\emph{(9)} $\vdash (\neg p\to q)\leftrightarrow (\neg q\to p)$,

\emph{(10)} $\vdash (\neg p)^{+}\leftrightarrow \neg p^{-}$ and $\vdash (\neg p)^{-}\leftrightarrow \neg p^{+}$,

\emph{(11)} $\vdash (p\vee q) \leftrightarrow (q \vee p)$.

\end{prop}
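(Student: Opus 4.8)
The plan is to derive all eleven items as Hilbert-style theorems of $\text{sq}\L^{*}$, working around the fact that ordinary detachment is unavailable. The first thing I would isolate is a \emph{detachment-under-modality} step and reuse it everywhere: given theorems $\vdash A$ and $\vdash A\to B$, lift both by (Reg) to $(r\to r)\to A$ and $(r\to r)\to(A\to B)$, combine them by (qMP) to obtain $(r\to r)\to B$, and then remove the modality by whichever of (AReg1)--(AReg4) matches the syntactic shape of $B$ (an implication, a negated implication, $\neg 1$, or $1$). Every informal ``apply modus ponens to an axiom'' is really this three-line detour, so I would state it once. With it in hand, (2) and (3) come almost directly from (R2$^{\prime}$): feeding it the halves of the hypotheses gives $(p\to r)\to(q\to t)$ for (2), while feeding it the chain $p\to q,\ q\to r$ yields $(q\to q)\to(p\to r)$, from which (AReg1) strips the prefix to give transitivity. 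Item (1) is detachment through the contraposition axiom (Q1), and (4) chains (Q5) with the appropriate instance of (Q1) using the transitivity just proved.

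Item (5), $\vdash p\to p$, is the first genuinely delicate point, since there is no primitive reflexivity and no plain modus ponens. I would bootstrap it from (Q3): the instance $\vdash p\leftrightarrow((p\to p)\to p)$ supplies $p\to((p\to p)\to p)$ and $((p\to p)\to p)\to p$; feeding these to (R2$^{\prime}$) produces $(X\to X)\to(p\to p)$ with $X=(p\to p)\to p$, and a second instance of (Q3) together with the detachment step collapses this to $p\to p$. Reflexivity then feeds the replacement lemma (6), which I would prove by induction on how the formula is built around the displayed subformula, using (1) for the $\neg$-step, (2) for the $\to$-step, and reflexivity for the untouched side; since $^{+}$, $^{-}$ and $\vee$ are abbreviations in $\to,\neg$, no extra cases arise. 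Item (7) is immediate from (5) and (Reg), choosing the auxiliary formula in (Reg) to be the desired one.

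The remaining items are where the involution rules and the defined connectives must be handled carefully, and I expect (8), (10) and especially (11) to be the main obstacles. For (8) I would first establish the implication-shaped case $\vdash\neg\neg(a\to b)\leftrightarrow(a\to b)$ (apply (1) to (Q5), then chain with (Q5) again via (3)) and the constant case $\vdash\neg\neg 1\leftrightarrow 1$ (rewrite $1$ as the implication $(1\to 1)\to 1$ using (Q2)), and reduce the two halves of $p\leftrightarrow\neg\neg p$ to one another by contraposition (Q1). The subtle point is the bare-variable case, which is \emph{not} reachable from the axioms by contraposition alone and must lean on (Inv1)/(Inv2); I flag this as the crux of (8). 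Item (9) then follows by contraposing with (Q1) and rewriting the doubled negation through (8) under the replacement lemma (6), and (10) follows by unfolding $p^{+}=(p\to 1)\to 1$ and $p^{-}=(p\to\neg 1)\to\neg 1$ and pushing the outer $\neg$ inward with (Q1), (Q5), (8) and (9). The hardest step is (11): commutativity of the \emph{defined} join is not an axiom, so I would either grind the two unfoldings of $\vee$ into each other, using the accumulated equivalences (1)--(10) as rewriting rules under (6), or, if the bridge from provable equivalence to algebraic validity of regular terms is available, invoke that $p\vee q$ and $q\vee p$ are regular terms equal in every Wajsberg* algebra and transport the equality through Proposition \ref{logic2}. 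I expect the bookkeeping of (11), rather than any single conceptual gap, to be the real difficulty, with the absence of ordinary modus ponens making every rewriting step heavier than in a standard implicative calculus.
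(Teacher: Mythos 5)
Your items (1)--(7), (9) and (10) essentially reproduce the paper's derivations, and your shortcuts are legitimate: $(q\to q)\to(p\to r)$ can indeed be stripped directly by (AReg1) in (3) and (5), and (7) follows from (5) by a single application of (Reg) with auxiliary formula $q$. The first genuine gap is in (8). You rightly isolate the bare-variable case, but your proposed fix --- ``lean on (Inv1)/(Inv2)'' --- cannot work: these are inference rules ($p\vdash\neg\neg p$, $\neg\neg p\vdash p$), so they only convert theorems into theorems; a bare variable is not a theorem, and the calculus has no deduction theorem (it lacks even plain modus ponens), so no use of these rules can ever produce the object-level implications $p\to\neg\neg p$ and $\neg\neg p\to p$. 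The paper avoids any case split: by (Q3), $\vdash p\leftrightarrow((q\to q)\to p)$, which \emph{is} implication-shaped; contraposing twice with (Q1) yields $\vdash p\leftrightarrow(\neg\neg(q\to q)\to\neg\neg p)$; the subformula $\neg\neg(q\to q)$ is rewritten to $q\to q$ (your implication-shaped case, obtained from (Q5), (1) and (3)) via replacement (6); and a last instance of (Q3), $\vdash((q\to q)\to\neg\neg p)\leftrightarrow\neg\neg p$, strips the prefix. The idea you are missing is that (Q3) embeds an arbitrary formula into an implication context and pulls it back out again.

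The second gap is (11). Your first option --- grinding the two unfoldings of $\vee$ into each other using (1)--(10) --- has no route to success: none of (1)--(10) involves $\vee$, and the equivalence of the two unfolded terms is precisely the content of the algebraic commutativity axiom (QW*10); it is not a by-product of $\neg$/$\to$ bookkeeping. In the calculus it is encoded in axiom (Q7), whose right-hand side deliberately swaps the disjuncts: $(p\to(q\vee r))\leftrightarrow((p\to r)\vee(p\to q))$. Your fallback --- transporting $x\vee y\approx y\vee x$ from the algebras through Proposition \ref{logic2} --- begs the question: that proposition compares validity in $\mathbb{SQW^*}$ with validity in $\mathbb{W^*}$ and says nothing about $\vdash_{\scriptscriptstyle sqL^*}$; the bridge from validity back to derivability is the completeness half of Theorem \ref{4.1}, which is proved only later. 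The paper's actual proof is short: instantiate (Q7) with $p:=(r\to r)$ to get $\vdash((r\to r)\to(p\vee q))\leftrightarrow(((r\to r)\to q)\vee((r\to r)\to p))$, then use (Q3) together with replacement (6) to delete the three prefixes $(r\to r)\to{}$, leaving exactly $\vdash(p\vee q)\leftrightarrow(q\vee p)$.
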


\begin{proof}
\begin{enumerate}
\renewcommand{\labelenumi}{(\theenumi)}
\item If $\vdash p\leftrightarrow q$, then
\begin{flalign*}
  1^{\circ} &\ p \to q &(\text{Hypothesis})\\
  2^{\circ} &\ (r\to r)\to (p\to q) & 1^{\circ}, (\text{Reg})\\
  3^{\circ} &\ (p\to q)\to (\neg q\to \neg p)& (\text{Q}1)\\
  4^{\circ} &\ (r\to r)\to((p\to q)\to (\neg q\to \neg p))& 1^{\circ}, (\text{Reg})\\
  5^{\circ} &\ (r\to r)\to(\neg q\to \neg p) & 2^{\circ}, 4^{\circ}, (\text{qMP})\\
  6^{\circ} &\ \neg q\to \neg p & 5^{\circ}, (\text{AReg1})
\end{flalign*}
By the similar way, we can get that $\vdash \neg p\to \neg q$. So $\vdash \neg p\leftrightarrow \neg q$.

\item If $\vdash p\leftrightarrow q$ and $\vdash r\leftrightarrow t$, then
\begin{flalign*}
  1^{\circ} & \ p\to q &(\text{Hypothesis})\\
  2^{\circ} & \ t\to r &(\text{Hypothesis})\\
  3^{\circ} & \ (q\to t)\to (p\to r)& 1^{\circ}, 2^{\circ}, (\text{R}2^{\prime})
\end{flalign*}
By the similar way, we can get that $\vdash (p\to r)\to(q\to t)$. So $\vdash (p\to r)\leftrightarrow(q\to t)$.

\item If $\vdash p\leftrightarrow q$ and $\vdash q\leftrightarrow r$, then
\begin{flalign*}
  1^{\circ} & \ p\to q &(\text{Hypothesis})\\
  2^{\circ} & \ q\to r &(\text{Hypothesis})\\
  3^{\circ} & \ (q\to q)\to (p\to r)& 1^{\circ}, 2^{\circ}, (\text{R}2^{\prime})\\
  4^{\circ} & \ (r\to r)\to ((q\to q)\to (p\to r))& 3^{\circ}, (\text{Reg})\\
  5^{\circ} & \ ((q\to q)\to (p\to r))\to (p\to r) & (\text{Q}3)\\
  6^{\circ} & \ (r\to r)\to (((q\to q)\to (p\to r))\to (p\to r)) & 5^{\circ}, (\text{Reg})\\
  7^{\circ} & \ (r\to r)\to (p\to r) & 4^{\circ}, 6^{\circ}, (\text{qMP})\\
  8^{\circ} & \ p\to r & 7^{\circ}, (\text{AReg1})
\end{flalign*}
Similarly, we can get that $\vdash r\to p$. So $\vdash p\leftrightarrow r$.

\item We have
\begin{flalign*}
  1^{\circ} & \ \neg (p\to q)\to ( q\to p) & (\text{Q}5)\\
  2^{\circ} & \ (q\to p)\to (\neg p\to \neg q) & (\text{Q}1)\\
  3^{\circ} & \ \neg (p\to q)\to (\neg p\to \neg q) & 1^{\circ}, 2^{\circ}, (3)
\end{flalign*}
Similarly, we can get that $\vdash (\neg p\to \neg q)\to \neg (p\to q)$. So $\vdash \neg (p\to q)\leftrightarrow (\neg p\to \neg q)$.

\item We have
\begin{flalign*}
  1^{\circ} & \ (p\to p) \to ((q\to q)\to (p\to p)) &(\text{Q}3)\\
  2^{\circ} & \ ((q\to q)\to (p\to p))\to (p\to p) & (\text{Q}3)\\
  3^{\circ} & \ (p\to p)\to (p\to p) & 1^{\circ}, 2^{\circ}, (3)\\
  4^{\circ} & \ p \to p & 3^{\circ}, (\text{AReg1})
\end{flalign*}
So $\vdash p\rightarrow p$.

\item Suppose that $\vdash p_1 \leftrightarrow r_1$. We use the induction for the number $n$ of connectives in $p$. If $n=0$, then $p=p_1$ and $r=r_1$, so we have $\vdash p \leftrightarrow r$. If $n=1$, then $p_1$ is a propositional variable, we have

(a) If $p=\neg p_1$, then $r=\neg r_1$.
\begin{flalign*}
  1^{\circ} & \ p_1\leftrightarrow r_1 &(\text{Hypothesis})\\
  2^{\circ} & \ \neg p_1\leftrightarrow \neg r_1 & 1^{\circ}, (1)
\end{flalign*}
So $\vdash p \leftrightarrow r$.

(b) If $p=p_1\to t_1$, then $r=r_1\to t_1$, where $t_1$ is a propositional variable.
\begin{flalign*}
  1^{\circ} & \ p_1\leftrightarrow r_1 &(\text{Hypothesis})\\
  2^{\circ} & \ t_1\leftrightarrow t_1 & (5)\\
  3^{\circ} & \ (p_1\to t_1)\leftrightarrow (r_1\to t_1) & 1^{\circ}, 2^{\circ}, (2)
\end{flalign*}

If $p= t_1 \to p_1$, then $r= t_1 \to r_1$, where $t_1$ is a propositional variable.
\begin{flalign*}
  1^{\circ} & \ t_1\leftrightarrow t_1 & (5)\\
  2^{\circ} & \ p_1\leftrightarrow r_1 & (\text{Hypothesis})\\
  3^{\circ} & \ (t_1\to p_1)\leftrightarrow (t_1\to r_1) & 1^{\circ}, 2^{\circ}, (2)
\end{flalign*}
So we have $\vdash p\leftrightarrow r$.

Assume that the result holds when $n\le k$. Now we verify that the following cases hold when $n=k+1$.

(c) If $p=\neg u$, and $p_1$ is a subformula of $u$, then we denote that $v$ is obtained by replacing $p_1$ with $r_1$ in $u$, so $r=\neg v$. Since the number of connectives in $u$ is not more than $k$ and $\vdash p_1\leftrightarrow r_1$, we have $\vdash u\leftrightarrow v$ by the inductive assumption. So we have $\vdash p\leftrightarrow r$.

(d) If $p=u\to q$ and $p_1$ is a subformula of $u$ or $q$, then we denote that $v$ or $t$ is obtained by replacing $p_1$ with $r_1$ in $u$ or $q$, so $r=v\to q$ or $r=u\to t$. Since the number of connectives in $u$ or $q$ is not more than $k$ and $\vdash p_1\leftrightarrow r_1$, we have $\vdash u\leftrightarrow v$ or $\vdash q\leftrightarrow t$ by the inductive assumption. So
\begin{flalign*}
  1^{\circ} & \ u\leftrightarrow v &(\text{Hypothesis})\\
  2^{\circ} & \ q\leftrightarrow q & (5)\\
  3^{\circ} & \ (u\to q)\leftrightarrow (v\to q) & 1^{\circ}, 2^{\circ}, (2)
\end{flalign*}
or
\begin{flalign*}
  1^{\circ} & \ u\leftrightarrow u & (5)\\
  2^{\circ} & \ q\leftrightarrow t&(\text{Hypothesis})\\
  3^{\circ} & \ (u\to q)\leftrightarrow (u\to t) & 1^{\circ}, 2^{\circ}, (2)
\end{flalign*}
Hence $\vdash p\leftrightarrow r$.

\item We have
\begin{flalign*}
  1^{\circ} & \ p\to p &(5)\\
  2^{\circ} & \ (r\to r)\to (p\to p) & 1^{\circ}, (\text{Reg})\\
  3^{\circ} & \ (p\to p)\to ((q\to q)\to (p\to p)) & (\text{Q}3)\\
  4^{\circ} & \ (r\to r)\to ((p\to p)\to ((q\to q)\to (p\to p)))& 3^{\circ}, (\text{Reg})\\
  5^{\circ} & \ (r\to r)\to ((q\to q)\to (p\to p)) & 2^{\circ}, 4^{\circ}, (\text{qMP})\\
  6^{\circ} & \ (q\to q)\to (p\to p) & 5^{\circ}, (\text{AReg}1)
\end{flalign*}
Swapping $p$ and $q$, we can get that $\vdash (p\to p)\leftrightarrow (q\to q)$.

\item We have
\begin{flalign*}
1^{\circ} & \ \neg(q\to q)\leftrightarrow (q\to q)& (\text{Q}5)\\
2^{\circ} & \ \neg\neg(q\to q)\leftrightarrow \neg(q\to q) & 1^{\circ}, (1)\\
  3^{\circ} & \ \neg\neg(q\to q)\leftrightarrow (q\to q) & 2^{\circ}, 1^{\circ}, (3)\\
  4^{\circ} & \ p\leftrightarrow ((q\to q)\to p)& (\text{Q}3)\\
  5^{\circ} & \ ((q\to q)\to p)\leftrightarrow (\neg p\to \neg(q\to q)) & (\text{Q}1)\\
  6^{\circ} & \ p\leftrightarrow (\neg p\to \neg(q\to q)) & 4^{\circ}, 5^{\circ}, (3)\\
  7^{\circ} & \ (\neg p\to \neg(q\to q))\leftrightarrow (\neg\neg(q\to q)\to \neg\neg p) & (\text{Q}1)\\
  8^{\circ} & \ p\leftrightarrow (\neg\neg(q\to q)\to \neg\neg p) & 6^{\circ}, 7^{\circ}, (3)\\
  9^{\circ} & \ p\leftrightarrow ((q\to q)\to \neg\neg p) & 8^{\circ}, 3^{\circ}, (6)\\
  10^{\circ} & \ ((q\to q)\to \neg\neg p)\leftrightarrow \neg\neg p & (\text{Q}3)\\
  11^{\circ} & \ p\leftrightarrow \neg\neg p & 9^{\circ}, 10^{\circ}, (3)
\end{flalign*}
So we have $\vdash p\leftrightarrow \neg\neg p$.

\item We have
\begin{flalign*}
  1^{\circ} & \ (\neg p\to q)\leftrightarrow (\neg q\to \neg\neg p) & (\text{Q}1)\\
  2^{\circ} & \ \neg\neg p\leftrightarrow p & (8)\\
  3^{\circ} & \ (\neg p\to q)\leftrightarrow (\neg q\to p) & 1^{\circ}, 2^{\circ}, (6)
\end{flalign*}
So $\vdash (\neg p\to q)\leftrightarrow (\neg q\to p)$.

\item Since $(\neg p)^+ = (\neg p\to 1)\to 1$ and $\neg p^- = \neg((p\to \neg 1)\to \neg 1)$, we have
\begin{flalign*}
  1^{\circ} & \ ((\neg p\to 1)\to 1)\leftrightarrow \neg(1\to (\neg p\to 1)) & (\text{Q}5)\\
  2^{\circ} & \ 1\leftrightarrow \neg\neg 1 & (8)\\
  3^{\circ} & \ ((\neg p\to 1)\to 1)\leftrightarrow \neg(1\to (\neg p\to \neg\neg 1)) & 1^{\circ}, 2^{\circ}, (6)\\
  4^{\circ} & \ (\neg p\to \neg \neg 1)\leftrightarrow \neg(p\to \neg 1) & (4)\\
  5^{\circ} & \ ((\neg p\to 1)\to 1) \leftrightarrow \neg(1\to \neg(p\to \neg 1)) & 3^{\circ}, 4^{\circ}, (6)\\
  6^{\circ} & \ ((\neg p\to 1)\to 1) \leftrightarrow \neg(\neg \neg 1\to \neg(p\to \neg 1)) & 2^{\circ}, 5^{\circ}, (6)\\
  7^{\circ} & \ (\neg \neg 1\to \neg(p\to \neg 1))\leftrightarrow ((p\to \neg 1)\to \neg 1) & (\text{Q}1)\\
  8^{\circ} & \ \neg(\neg \neg 1\to \neg(p\to \neg 1))\leftrightarrow \neg((p\to \neg 1)\to \neg 1) & 7^{\circ}, (1)\\
  9^{\circ} & \ ((\neg p\to 1)\to 1)\leftrightarrow \neg((p\to \neg 1)\to \neg 1) & 6^{\circ}, 8^{\circ}, (3)
\end{flalign*}
So $\vdash (\neg p)^{+}\leftrightarrow \neg p^-$. From the former result, we have $\vdash (\neg \neg p)^{+}\leftrightarrow \neg (\neg p)^-$, then
\begin{flalign*}
  1^{\circ} & \ (\neg \neg p)^{+}\leftrightarrow \neg(\neg p)^- & (\text{Hypothesis})\\
  2^{\circ} & \ \neg \neg p \leftrightarrow p& (8)\\
  3^{\circ} & \ p^{+}\leftrightarrow \neg(\neg p)^- & 1^{\circ}, 2^{\circ}, (6)\\
  4^{\circ} & \ \neg p^{+}\leftrightarrow \neg \neg(\neg p)^- & 3^{\circ}, (1)\\
  5^{\circ} & \ \neg \neg(\neg p)^-\leftrightarrow (\neg p)^- & (8)\\
  6^{\circ} & \ \neg p^{+}\leftrightarrow (\neg p)^- & 4^{\circ}, 5^{\circ}, (3)
\end{flalign*}
So we have $\vdash \neg p^{+}\leftrightarrow (\neg p)^-$.

\item We have
\begin{flalign*}
  1^{\circ} & \ ((r\to r)\to (p\vee q)) \leftrightarrow (((r\to r)\to q)\vee ((r\to r)\to p)) & (\text{Q}7)\\
  2^{\circ} & \ (p\vee q)\leftrightarrow ((r\to r)\to (p\vee q)) & (\text{Q}3)\\
  3^{\circ} & \ (p\vee q) \leftrightarrow (((r\to r)\to q)\vee ((r\to r)\to p)) & 2^{\circ}, 1^{\circ}, (3)\\
  4^{\circ} & \ ((r\to r)\to q)\leftrightarrow q & (\text{Q}3)\\
  5^{\circ} & \ (p\vee q) \leftrightarrow (q\vee ((r\to r)\to p))& 3^{\circ}, 4^{\circ}, (6)\\
  6^{\circ} & \ ((r\to r)\to p)\leftrightarrow p & (\text{Q}3)\\
  7^{\circ} & \ (p\vee q) \leftrightarrow (q\vee p)& 5^{\circ}, 6^{\circ}, (6)
\end{flalign*}

So $\vdash (p\vee q) \leftrightarrow (q \vee p)$.
\end{enumerate}
\end{proof}

In order to discuss the completeness and soundness of sq$\L^*$, we need some conclusions here.

\begin{lem}\label{lemma2}
Let $F(V)$ be the formulas set of \emph{sq$\L^{*}$} and $q_1, \cdots, q_n, q, p \in F(V)$. Then we have

\emph{(1)} if $q_1, \cdots, q_n \vdash_{\scriptscriptstyle L^{*}} q$, then $q_1, \cdots, q_n \vdash_{\scriptscriptstyle sqL^{*}} (p \to p)\to q$,

\emph{(2)} if $q$ is regular, then $(p \to p)\to q \vdash_{\scriptscriptstyle sqL^{*}} q$.
\end{lem}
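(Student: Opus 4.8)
The plan is to handle both parts by induction, lifting $\L^*$-derivations into $\text{sq}\L^*$ in part (1) and stripping the prefix $(p\to p)\to$ in part (2). For (1), I would fix an $\L^*$-derivation $r_1,\dots,r_m=q$ of $q$ from $q_1,\dots,q_n$ and prove, by induction on $m$, the stronger statement that $q_1,\dots,q_n\vdash_{\scriptscriptstyle sqL^{*}}(p\to p)\to r_i$ for every $i\le m$; the case $i=m$ is then the desired conclusion. The starting observation is that the axiom schemas (P1)--(P10) of $\L^*$ are literally the axiom schemas (Q1)--(Q10) of $\text{sq}\L^*$ (after reordering), so any step $r_i$ that is an axiom of $\L^*$ is already a theorem of $\text{sq}\L^*$, and one application of (Reg) yields $(p\to p)\to r_i$. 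The same works when $r_i$ is a hypothesis $q_j$, since $q_1,\dots,q_n\vdash_{\scriptscriptstyle sqL^{*}}q_j$ trivially and (Reg) supplies the prefix.

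The rule cases are the heart of part (1). If $r_i$ comes from $r_j$ and $r_k=r_j\to r_i$ by (R1), then the induction hypothesis gives $(p\to p)\to r_j$ and $(p\to p)\to(r_j\to r_i)$, and a single application of (qMP), instantiating its schematic $r$ as the formula $p$, delivers $(p\to p)\to r_i$; thus (qMP) is exactly the prefixed form of modus ponens. If $r_i=(b\to c)\to(a\to d)$ comes from $a\to b$ and $c\to d$ by (R2), I would strip the prefixes from $(p\to p)\to(a\to b)$ and $(p\to p)\to(c\to d)$ using (AReg1) (legitimate since implications are regular), apply (R2$^{\prime}$) to obtain $r_i$, and reattach the prefix by (Reg). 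If $r_i=r_j^-$ comes from $r_j$ by (R3), I would apply (R3$^{\prime}$) directly to $(p\to p)\to r_j$ to get $r_j^-=r_i$, then (Reg). This exhausts the three rules of $\L^*$ and completes the induction.

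For (2), write the regular term in the normal form $q=\neg^{n}q^{*}$, where after removing all leading negations $q^{*}$ is either $1$ or an implication; these are exactly the regular terms, since the non-regular ones are the $\neg^{n}x$ with $x$ a variable. I would induct on $n$. The base cases are immediate: $(p\to p)\to 1\vdash_{\scriptscriptstyle sqL^{*}}1$ by (AReg4) and $(p\to p)\to(q_1\to q_2)\vdash_{\scriptscriptstyle sqL^{*}}q_1\to q_2$ by (AReg1) for $n=0$, and the one-negation versions $\neg 1$ and $\neg(q_1\to q_2)$ by (AReg3) and (AReg2) for $n=1$. For $n\ge 2$, write $q=\neg\neg q''$ with $q''=\neg^{n-2}q^{*}$ still regular; from the hypothesis $(p\to p)\to\neg\neg q''$ I would combine $\vdash\neg\neg q''\to q''$ (Proposition \ref{QL1}(8)) with (Reg) to form $(p\to p)\to(\neg\neg q''\to q'')$, then apply (qMP) to obtain $(p\to p)\to q''$; the induction hypothesis yields $q''$, and (Inv1) returns $\neg\neg q''=q$.

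The main obstacle is the rule-lifting in part (1): because $\text{sq}\L^*$ deliberately weakens modus ponens, one must verify that each $\L^*$ rule survives under the uniform prefix $(p\to p)\to$, and the delicate case is (R2), where the prefix must be removed temporarily to apply (R2$^{\prime}$)---this is licensed by (AReg1) precisely because the premises of (R2) are implications and hence regular. In part (2) the analogous subtlety is that the anti-regularization rules (AReg1)--(AReg4) only cover zero or one leading negation, so the double-negation reduction via (Inv1) and (qMP) is exactly what is needed to reach an arbitrary number of negations.
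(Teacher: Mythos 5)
Your proposal is correct, and it splits into two halves that compare differently with the paper's proof. For part (1) you follow the same strategy as the paper---induction on the length of the $\L^{*}$-derivation, lifting each rule under the prefix $(p\to p)\to$---but you actually carry the induction out more completely: the paper's write-up only treats the case where the premises of (R1), (R2), (R3) are themselves axioms or hypotheses (so it can apply (Reg), (R2$^{\prime}$), (R3$^{\prime}$) to them directly), whereas your version uses the genuine induction hypothesis, which forces the extra step you correctly identify in the (R2) case: stripping the prefixes via (AReg1), legitimate because the premises of (R2) are implications and hence regular, before applying (R2$^{\prime}$) and re-prefixing with (Reg). This makes your argument the more rigorous rendering of the same idea. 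For part (2) you take a genuinely different route: the paper disposes of the forms $\neg^{k}1$ and $\neg^{k}(q_1\to q_2)$ by appealing to Chang's $\L^{*}$-derivability $(p\to p)\to\neg\neg r\vdash_{\scriptscriptstyle L^{*}} r$ from \cite{2}, lifting it into sq$\L^{*}$ via part (1), applying (AReg2)/(AReg3), and rebuilding the negations with (Inv1); you instead run a self-contained induction on the number of leading negations inside sq$\L^{*}$ itself, using $\vdash\neg\neg q''\to q''$ from Proposition \ref{QL1}(8) together with (Reg) and (qMP) to peel off two negations at a time, then (Inv1) to restore them. Your version buys independence from part (1) and from the external $\L^{*}$ fact, at the cost of invoking Proposition \ref{QL1}(8) (which is available, since it precedes the lemma); the paper's version reuses machinery it has already set up but leaves part (2) dependent on part (1) and on a cited result.
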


\begin{proof}
(1) The proof is a simple induction on the length of the derivation of $q$ from $q_1, \cdots, q_n$ in $\L^{*}$.

If $q$ is the axiom of $\L^{*}$ or  $q \in \{ q_1, \cdots, q_n \}$, then by the rule (Reg), we have $q \vdash_{\scriptscriptstyle sqL^{*}} (p \to p)\to q$, so $q_1, \cdots, q_n \vdash_{\scriptscriptstyle sqL^{*}} (p \to p)\to q$.

If $q$ is derived from axioms and $\{ q_1, \cdots, q_n \}$ applying the Rules of deduction in $\L^*$, then we consider three subcases:

Suppose that $q$ is derived from the rule (R1). Then we can assume that $r, r \to q \vdash_{\scriptscriptstyle L^{*}} q$, where $r, r \to q \in \{$axioms$\} \cup \{ q_1, \cdots, q_n \}$. By the rule (Reg), we have $r \vdash_{\scriptscriptstyle sqL^{*}} (p \to p)\to r$ and $r \to q \vdash_{\scriptscriptstyle sqL^{*}} (p \to p)\to (r \to q)$, it follows that $(p \to p)\to r, (p \to p)\to (r \to q) \vdash_{\scriptscriptstyle sqL^{*}} (p \to p)\to q$ by the rule (qMP), so $q_1, \cdots, q_n \vdash_{\scriptscriptstyle sqL^{*}} (p \to p)\to q$.

Suppose that $q$ is derived from  the rule (R2). Then we can assume that $r \to t, u \to v \vdash_{\scriptscriptstyle L^{*}} (t \to u)\to (r \to v)$, where $r \to t, u \to v \in \{$axioms$\} \cup \{ q_1, \cdots, q_n \}$ and $q = (t \to u)\to (r \to v)$. By the rule (R2$^{\prime}$), we have $r \to t, u \to v \vdash_{\scriptscriptstyle sqL^{*}} q$ and then we get that $q \vdash_{\scriptscriptstyle sqL^{*}} (p \to p)\to q$  by the rule (Reg). So $q_1, \cdots, q_n \vdash_{\scriptscriptstyle sqL^{*}} (p \to p)\to q$.

Suppose that $q$ is derived from  the rule (R3). Then we can assume that $r \vdash_{\scriptscriptstyle L^{*}} r^{-}$, where $r \in \{$axioms$\} \cup \{ q_1, \cdots, q_n \}$ and $q = r^{-}$. By the rule (Reg), we have $r \vdash_{\scriptscriptstyle sqL^{*}} (p \to p)\to r$ and then we get $(p \to p)\to r \vdash_{\scriptscriptstyle sqL^{*}} r^{-}$ by the rule (R3$^{\prime}$). Applying the rule (Reg) again, we get $r^{-} \vdash_{\scriptscriptstyle sqL^{*}} (p \to p)\to r^{-}$. So $q_1, \cdots, q_n \vdash_{\scriptscriptstyle sqL^{*}} (p \to p)\to q$.

Thus, if $q_1, \cdots, q_n \vdash_{\scriptscriptstyle L^{*}} q$, then $q_1, \cdots, q_n \vdash_{\scriptscriptstyle sqL^{*}} (p \to p)\to q$.

(2) If $q = 1$, then we have $(p \to p)\to 1 \vdash_{\scriptscriptstyle sqL^{*}} 1$ by the rule (AReg4).

If $q = \neg^k 1$ $(1\leq k)$, we assume that $k$ is odd, then based on $(p \to p)\to \neg \neg r \vdash_{\scriptscriptstyle L^{*}} r$ in \cite{2}, we can deduce that $(p \to p)\to \neg^{k} 1 \vdash_{\scriptscriptstyle L^{*}} \neg 1$. By (1), we have $(p \to p)\to \neg^{k} 1 \vdash_{\scriptscriptstyle sqL^{*}} (p \to p)\to \neg 1$, and then we have $(p \to p)\to \neg 1 \vdash_{\scriptscriptstyle sqL^{*}} \neg 1$ from the rule (AReg3). Besides, we can deduce that $\neg 1 \vdash_{\scriptscriptstyle sqL^{*}} \neg^k 1$ by using the rule (Inv1) several times. Hence we have $(p \to p) \to q \vdash_{\scriptscriptstyle sqL^{*}} q$. If $k$ is even, then it can be proved similarly.

If $q = q_1 \to q_2$, where $q_1, q_2 \in F(V)$, then by the rule (AReg1), we have $(p \to p)\to q \vdash_{\scriptscriptstyle sqL^{*}} q$.

If $q = \neg^k (q_1 \to q_2)$ $(1\leq k)$, where $q_1, q_2 \in F(V)$. Assume that $k$ is odd. Then $(p \to p)\to \neg^{k} (q_1 \to q_2) \vdash_{\scriptscriptstyle L^{*}} \neg (q_1 \to q_2)$ from \cite{2} and then we have $(p \to p)\to \neg^{k} (q_1 \to q_2) \vdash_{\scriptscriptstyle sqL^{*}} (p \to p)\to \neg (q_1 \to q_2)$ from (1). Using the rule (AReg2), we get $(p \to p)\to \neg (q_1 \to q_2) \vdash_{\scriptscriptstyle sqL^{*}} \neg (q_1 \to q_2)$. And then we can deduce that $\neg (q_1 \to q_2) \vdash_{\scriptscriptstyle sqL^{*}} \neg^k (q_1 \to q_2)$ using the rule (Inv1) several times. So $(p \to p)\to q \vdash_{\scriptscriptstyle sqL^{*}} q$. If $k$ is even, then it can be proved similarly.

Therefore, if $q$ is regular, then $(p \to p)\to q \vdash_{\scriptscriptstyle sqL^{*}} q$.
\end{proof}

 Based on the fact that $\vDash_{\scriptscriptstyle \mathbb{W}^{*}} q$ iff $\vdash_{\scriptscriptstyle L^{*}} q$ in \cite{13}, we can show that sq$\L^{*}$ is coincided with $\vDash_{\scriptscriptstyle \mathbb{SQW^*}}$.

\begin{thm}\label{4.1}
$ \vDash_{\scriptscriptstyle \mathbb{SQW}^{*}} q$ iff $ \vdash_{\scriptscriptstyle sqL^{*}} q$.
\end{thm}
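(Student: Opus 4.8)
The plan is to establish the two implications separately, leaning on the bridge between $\text{sq}\L^{*}$ and $\L^{*}$ supplied by Lemma \ref{lemma2}, on the semantic bridge of Lemma \ref{logic5}(3), and on the recalled completeness of $\L^{*}$, namely $\vDash_{\mathbb{W}^{*}} q$ iff $\vdash_{L^{*}} q$.

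For the completeness direction, suppose $\vDash_{\mathbb{SQW}^{*}} q$. The first step is to observe that $q$ must be \emph{regular}. Indeed, if instead $q=\neg^{n}p$ for a propositional variable $p$, then evaluating in the square standard algebra $\textbf{SW*}$ at a value of $p$ with nonzero second coordinate makes $\neg^{n}p$ have nonzero second coordinate, whereas every designated element $(c\to 1)\to 1=c^{+}=\langle\max\{0,\cdot\},0\rangle$ has second coordinate $0$; hence $\mathbb{SQW}^{*}\nvDash q\approx (c\to1)\to1$, contradicting the hypothesis. Once $q$ is known to be regular, I would chain: Lemma \ref{logic5}(3) gives $\vDash_{\mathbb{W}^{*}} q$; the completeness of $\L^{*}$ gives $\vdash_{L^{*}} q$; Lemma \ref{lemma2}(1) with empty premise set gives $\vdash_{sqL^{*}} (p\to p)\to q$; and finally Lemma \ref{lemma2}(2), applied to the regular $q$, yields $(p\to p)\to q\vdash_{sqL^{*}} q$, so $\vdash_{sqL^{*}} q$.

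For the soundness direction I would argue by induction on the length of a derivation that every theorem of $\text{sq}\L^{*}$ is $\vDash_{\mathbb{SQW}^{*}}$-valid, so it suffices to check that each axiom (Q1)--(Q10) is valid and that each rule preserves designation. The key reduction is Proposition \ref{02} together with the term equivalence of Theorem \ref{th2}: every strong quasi-Wajsberg* algebra embeds into a product $\textbf{M}\times\textbf{FW*}$ of a Wajsberg* algebra and a flat one, and since $^{+}$ is computed coordinatewise, designation in the product is coordinatewise as well. It therefore suffices to verify validity and rule-preservation in (a) every flat strong quasi-Wajsberg* algebra and (b) every Wajsberg* algebra. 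On a flat algebra $\mathbf 0=\mathbf 1$ and, by Lemma \ref{f0}, every term built with $\to$ collapses to the unique designated element $\mathbf 0$; since (Q1)--(Q10) are regular and each rule's conclusion is either regular or of the form $\neg^{k}(\text{premise})$ with $\neg$ fixing $\mathbf0$, both designation and preservation hold automatically. On a Wajsberg* algebra $(r\to r)\to x=x$, so (qMP) collapses to $\L^{*}$'s (R1), (R2$'$) is (R2), (R3$'$) is (R3), while (Reg), (AReg1)--(AReg4), (Inv1), (Inv2) become trivial and (Flat) applies only in the degenerate algebra in which $\neg 1$ is designated; correspondingly (Q1)--(Q10) match the $\L^{*}$-axioms (P1)--(P10) up to the regular equivalence $(r\to r)\to x\approx x$. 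Soundness on this factor is thus inherited from the soundness of $\L^{*}$, and combining the two factors gives validity in every strong quasi-Wajsberg* algebra.

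The main obstacle is the soundness direction, and within it the two non-classical rules (qMP) and (Flat): plain modus ponens is not admissible in $\text{sq}\L^{*}$, so the argument must exploit that (qMP) becomes genuine modus ponens only after the prefix $(r\to r)\to(-)$, which is exactly the projection onto the regular (Wajsberg*) factor, while it trivializes on the flat factor. Stating the product decomposition of the consequence relation $\vDash_{\mathbb{SQW}^{*}}$ correctly---so that designation really tests coordinatewise and a shared valuation on $\textbf{M}\times\textbf{FW*}$ factors through the two projections---is the step demanding the most care; once it is in place the per-axiom and per-rule checks reduce to matching (Q1)--(Q10) with (P1)--(P10) and to the known soundness of $\L^{*}$.
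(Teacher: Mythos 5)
Your completeness half is essentially the paper's argument: force $q$ to be regular, pass from $\vDash_{\scriptscriptstyle \mathbb{SQW}^{*}}$ to $\vDash_{\scriptscriptstyle \mathbb{W}^{*}}$, invoke the completeness of $\L^{*}$ to get $\vdash_{\scriptscriptstyle L^{*}} q$, and finish with Lemma \ref{lemma2}(1) and \ref{lemma2}(2); the only cosmetic difference is that you certify regularity by a counter-valuation in $\textbf{SW*}$ (nonzero second coordinate), while the paper refutes $q\approx (c\to 1)\to 1$ in a Wajsberg* algebra. The soundness half, however, takes a genuinely different route. The paper never invokes the direct-product embedding there: it verifies each rule of sq$\L^{*}$ one at a time, reducing (qMP), (R2$^{\prime}$), (R3$^{\prime}$) to $\vDash_{\scriptscriptstyle \mathbb{W}^{*}}$ via the bridge Lemma \ref{logic5}(2)--(3) (whose proof rests on the subalgebra of regular elements), and handling (Reg), (AReg1)--(AReg4), (Inv1)--(Inv2) and (Flat) by direct equational computation inside an arbitrary strong quasi-Wajsberg* algebra -- e.g.\ for (Flat) it derives $1=\neg 1$ and $\neg p \approx (\neg c_1\to 1)\to 1$ from the premises. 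You instead factor everything through Proposition \ref{02} and Theorem \ref{th2}: embed $\textbf{S}$ into $\textbf{M}\times\textbf{FW*}$, observe that designation is tested coordinatewise, and check the two factors separately (on the flat factor every regular term collapses to $\mathbf{0}$ by Lemma \ref{f0}; on the Wajsberg* factor the whole system collapses onto $\L^{*}$). Your route is more uniform and makes transparent why (qMP) is the correct surrogate for modus ponens, whereas the paper's rule-by-rule route avoids the one point your argument leaves implicit: that ``designated'' passes through the embedding and splits coordinatewise in the product. To close that, note that the designated elements are exactly the fixed points of $^{+}$: the equation $x^{++}\approx x^{+}$ holds in $\textbf{SW*}$, hence in all of $\mathbb{SQW^*}$ by the standard completeness of Section 3, so ``$v(t)$ is designated'' is the equational condition $v(t)^{+}=v(t)$, which is preserved and reflected by embeddings and computed coordinatewise in products. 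With that observation supplied, your proof is correct and would equally well support the paper's final theorem on derivations from premises.
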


\begin{proof} For the one direction, we prove the completeness.

If $\vDash_{\scriptscriptstyle \mathbb{SQW}^{*}} q$, then $\mathbb{SQW}^{*} \models q \approx (c \to 1)\to 1$,  it turns out that $\mathbb{W}^{*} \models q \approx (c \to 1)\to 1$, i.e., $\vDash_{\scriptscriptstyle \mathbb{W}^{*}} q$. So we have $\vdash_{\scriptscriptstyle L^{*}} q$.
If $q$ is non-regular, then $q \approx (c \to 1)\to 1$ fails in some Wajsberg* algebras and then $\mathbb{W}^{*} \nvDash q \approx (c \to 1)\to 1$. Hence $q$ is regular and then we have $\vdash_{\scriptscriptstyle sqL^{*}} q$ by Lemma \ref{lemma2}(1) and Lemma \ref{lemma2}(2).

For the other direction, we prove the soundness.

If $\vdash_{\scriptscriptstyle sqL^{*}} q$, then we distinguish several cases in order to discuss.

(1) If $q$ is an axiom of sq$\L^{*}$, then $q$ is regular. Since the axioms of sq$\L^{*}$ are the same as $\L^{*}$, we have that $\vdash_{\scriptscriptstyle L^{*}} q$ and then $\vDash_{\scriptscriptstyle \mathbb{W}^{*}} q$. From Lemma \ref{logic5}(3), we have $\vDash_{\scriptscriptstyle \mathbb{SQW}^{*}} q$.

(2) If $q$ is a theorem of sq$\L^*$, then we need to verify that all the rules satisfy $\vDash_{\scriptscriptstyle \mathbb{SQW}^{*}}$.

(qMP) Suppose that $\mathbb{SQW}^{*} \models (r\to r)\to p \approx (c_1\to 1)\to 1 $ and $\mathbb{SQW}^{*} \models (r\to r)\to (p\to q) \approx (c_2\to 1)\to 1$. Then we have $\mathbb{W}^{*} \models (r\to r)\to p \approx (c_1\to 1)\to 1 $ and $\mathbb{W}^{*} \models (r\to r)\to (p\to q) \approx (c_2\to 1)\to 1$. Since $(r\to r)\to p \approx p $ and $(r\to r)\to (p\to q) \approx p\to q$ hold in any Wajsberg* algebra, we have $\mathbb{W}^{*} \models p \approx (c_1\to 1)\to 1 $ and $\mathbb{W}^{*} \models p\to q \approx (c_2\to 1)\to 1$. Applying the soundness of $\L^*$, we get $\mathbb{W}^{*} \models p \approx (c_1\to 1)\to 1 \& p\to q \approx (c_2\to 1)\to 1 \Rightarrow q \approx (c\to 1)\to 1$, it turns out that $\mathbb{W}^{*} \models (r\to r)\to p \approx (c_1\to 1)\to 1 \& (r\to r)\to (p\to q) \approx (c_2\to 1)\to 1 \Rightarrow q \approx (c\to 1)\to 1$, i.e., $(r\to r)\to p, (r\to r)\to (p\to q) \vDash_{\scriptscriptstyle \mathbb{W}^{*}} q$. So we have $(r\to r)\to p, (r\to r)\to (p\to q) \vDash_{\scriptscriptstyle \mathbb{SQW}^{*}} (r\to r)\to q$ by Lemma \ref{logic5}(2).

(Reg) Suppose that $\mathbb{SQW}^* \models p \approx (c\to 1)\to 1$. Since $(r\to r)\to p \approx (r\to r)\to ((c\to 1)\to 1)$ holds in any strong quasi-Wajsberg* algebra and $(c\to 1)\to 1$ is regular, we have that $(r\to r)\to ((c\to 1)\to 1)\approx (c\to 1)\to 1$, it turns out that $\mathbb{SQW}^* \models (r\to r)\to p \approx (c\to 1)\to 1$, i.e., $p \vDash_{\scriptscriptstyle \mathbb{SQW}^{*}} (r\to r)\to p$.

(AReg1) Suppose that $\mathbb{SQW}^* \models (r\to r)\to (p\to q) \approx (c\to 1)\to 1$. Since $p\to q$ is regular, we have that $(r\to r)\to (p\to q) \approx p\to q$ holds in any strong quasi-Wajsberg* algebra from Proposition \ref{logic4}. So $\mathbb{SQW}^* \models p\to q \approx (c\to 1)\to 1$, i.e., $(r\to r)\to (p\to q) \vDash_{\scriptscriptstyle \mathbb{SQW}^{*}} p\to q$.

Note that $\neg(p\to q)$, $\neg 1$ and $1$ are regular, the proofs of (AReg2)--(AReg4) are similar to (AReg1).

(Inv1)--(Inv2) Since $p\approx \neg \neg p$ holds in any strong quasi-Wajsberg* algebra, we have that $p\vDash_{\scriptscriptstyle \mathbb{SQW}^{*}} \neg \neg p$ and $\neg \neg p \vDash_{\scriptscriptstyle \mathbb{SQW}^{*}} p$.

(Flat) Suppose that $\mathbb{SQW}^* \models p\approx (c_1\to 1)\to 1 \& \neg1\approx (c_2 \to 1)\to 1$. For any strong quasi-Wajsberg* algebra, since $1=\neg \neg 1=\neg ((c_2 \to 1)\to 1)=1\to (c_2\to 1)$, we have $1\to 1 = (1\to (c_2 \to 1))\to 1 = 1$ and then $1\to 1=1\to (1\to 1)=\neg 1$, so $1=\neg 1$. Moreover, we have $\neg p = \neg((c_1\to 1)\to 1)=(\neg c_1 \to \neg 1)\to \neg 1= (\neg c_1 \to 1)\to 1$. It follows that $\mathbb{SQW}^* \models p\approx (c_1\to 1)\to 1 \& \neg1\approx (c_2 \to 1)\to 1 \Rightarrow \neg p \approx (\neg c_1 \to 1)\to 1$, i.e., $p, \neg 1 \vDash_{\scriptscriptstyle \mathbb{SQW}^{*}} \neg p$.

(R2$^{\prime}$) Because $p\to q, r\to t \vDash_{\scriptscriptstyle \mathbb{W}^{*}} (q\to r)\to (p\to t)$, we have $p\to q, r\to t \vDash_{\scriptscriptstyle \mathbb{SQW}^{*}} (u\to u)\to ((q\to r)\to (p\to t))$ from Lemma \ref{logic5}(2). Since $(u\to u)\to ((q\to r)\to (p\to t)) \approx (q\to r)\to (p\to t)$ holds in any strong quasi-Wajsberg* algebra, we have $p\to q, r\to t \vDash_{\scriptscriptstyle \mathbb{SQW}^{*}} (q\to r)\to (p\to t)$.

(R3$^{\prime}$) Suppose that $\mathbb{SQW}^* \models (r\to r)\to p\approx (c_1\to 1)\to 1$. Then we have $\mathbb{W}^* \models (r\to r)\to p\approx (c_1\to 1)\to 1$. Since $(r\to r)\to p\approx p$ holds in any strong quasi-Wajsberg* algebra, we have $\mathbb{W}^* \models p\approx (c_1\to 1)\to 1$. Applying the soundness of $\L^*$, we get $\mathbb{W}^* \models p\approx (c_1\to 1)\to 1\Rightarrow p^- \approx (c_2 \to 1)\to 1$, it turns out that $\mathbb{W}^* \models (r\to r)\to p\approx (c_1\to 1)\to 1\Rightarrow p^- \approx (c_2 \to 1)\to 1$, i.e., $(r\to r)\to p \vDash_{\scriptscriptstyle \mathbb{W}^{*}} p^-$. Note that $p^-$ is regular, we have $(r\to r)\to p \vDash_{\scriptscriptstyle \mathbb{SQW}^{*}} p^-$ from Lemma \ref{logic5}(3).

Therefore, $\vDash_{\scriptscriptstyle \mathbb{SQW}^{*}} q$ iff $\vdash_{\scriptscriptstyle sqL^{*}} q$.
\end{proof}

In the logic $\L^*$, authors have also showed that $q_1, \cdots, q_n \vdash_{\scriptscriptstyle L^{*}} q$ iff $q_1, \cdots, q_n\vDash_{\scriptscriptstyle \mathbb{W}^{*}} q$ \cite{13}.  For the logic sq$\L^*$, we can show the following result.

\begin{thm}
If $q_1, \cdots, q_n \vdash_{\scriptscriptstyle sqL^{*}} q$, then $q_1, \cdots, q_n\vDash_{\scriptscriptstyle \mathbb{SQW}^{*}} q$.
\end{thm}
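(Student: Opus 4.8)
The plan is to argue by induction on the length $N$ of a fixed proof $\varphi_1, \varphi_2, \ldots, \varphi_N = q$ of $q$ from $\Gamma = \{q_1, \ldots, q_n\}$, establishing at each stage that $q_1, \ldots, q_n \vDash_{\scriptscriptstyle \mathbb{SQW}^{*}} \varphi_i$ holds for every $i \le N$; the case $i = N$ is then exactly the assertion. The observation that drives the induction is that $\vDash_{\scriptscriptstyle \mathbb{SQW}^{*}}$, read through Definition \ref{de}, is simply the Tarskian consequence relation on $\mathbb{SQW}^{*}$ determined by the designated predicate ``$v(t)$ has the form $(c \to 1)\to 1$'', i.e. $v(t)$ is a positive element $c^{+}$. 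Accordingly I would first record three structural properties of $\vDash_{\scriptscriptstyle \mathbb{SQW}^{*}}$, each immediate once one fixes an arbitrary $\textbf{A} \in \mathbb{SQW}^{*}$ with a valuation $v$ and tracks which formulas receive designated values: \emph{reflexivity} (a premise entails itself), \emph{monotonicity} (enlarging the premise set preserves entailment), and \emph{cut} (if $q_1, \ldots, q_n \vDash_{\scriptscriptstyle \mathbb{SQW}^{*}} \psi_\ell$ for every $\ell$ and $\psi_1, \ldots, \psi_m \vDash_{\scriptscriptstyle \mathbb{SQW}^{*}} \chi$, then $q_1, \ldots, q_n \vDash_{\scriptscriptstyle \mathbb{SQW}^{*}} \chi$).

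For the induction itself I would split on how $\varphi_i$ enters the proof. If $\varphi_i$ is an axiom of $\text{sq}\L^{*}$, then $\vdash_{\scriptscriptstyle sqL^{*}} \varphi_i$, hence $\vDash_{\scriptscriptstyle \mathbb{SQW}^{*}} \varphi_i$ by Theorem \ref{4.1}, and monotonicity upgrades this to $q_1, \ldots, q_n \vDash_{\scriptscriptstyle \mathbb{SQW}^{*}} \varphi_i$. If $\varphi_i \in \Gamma$, reflexivity gives the conclusion outright. If $\varphi_i$ is obtained from earlier formulas $\varphi_j, \varphi_k$ (with $j, k < i$) by one of the deduction rules, then the induction hypothesis supplies $q_1, \ldots, q_n \vDash_{\scriptscriptstyle \mathbb{SQW}^{*}} \varphi_j$ and $q_1, \ldots, q_n \vDash_{\scriptscriptstyle \mathbb{SQW}^{*}} \varphi_k$, while the single-step soundness of that particular rule, namely that its premises $\vDash_{\scriptscriptstyle \mathbb{SQW}^{*}}$ its conclusion, was already verified rule by rule in the soundness half of the proof of Theorem \ref{4.1}, covering (qMP), (Reg), (AReg1)--(AReg4), (Inv1)--(Inv2), (Flat), $(\text{R}2^{\prime})$ and $(\text{R}3^{\prime})$. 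Cut then combines these two facts to yield $q_1, \ldots, q_n \vDash_{\scriptscriptstyle \mathbb{SQW}^{*}} \varphi_i$, completing the inductive step.

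The only place where I expect any care to be needed is in making the cut property precise against Definition \ref{de}: one must check that the existential witnesses $c_\ell$ certifying that each intermediate $\varphi_\ell$ is designated can be handed over as the premise-witnesses demanded by the rule's single-step soundness, so that designatedness transfers to the conclusion. Since the constants in Definition \ref{de} are quantified existentially per valuation, the predicate ``$v(\varphi_\ell)$ equals some $c_\ell^{+}$'' depends only on the value $v(\varphi_\ell)$ and not on the chosen witness, so this matching is automatic and no genuine obstruction arises. All remaining steps are the routine book-keeping of the induction, the entire semantic content having already been discharged in the proof of Theorem \ref{4.1}.
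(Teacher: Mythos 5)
Your proposal is correct and is essentially the paper's own argument: the paper proves this theorem by simply remarking that the proof is ``similar to Theorem \ref{4.1}'', i.e.\ one inducts over the derivation, handling axioms via their validity and rule applications via the rule-by-rule soundness already verified in the soundness half of Theorem \ref{4.1}. Your explicit formulation through the Tarskian properties (reflexivity, monotonicity, cut) of $\vDash_{\scriptscriptstyle \mathbb{SQW}^{*}}$, together with the observation that designatedness depends only on the value assigned and not on the witnessing constant, is exactly the book-keeping the paper leaves implicit.
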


\begin{proof}
The proof is similar to Theorem 4.1.
\end{proof}

\noindent\textbf{Acknowledgement}

This study was funded by Shandong Provincial Natural Science Foundation, China
(No. ZR2020MA041).

\begin{bibdiv}
  \begin{biblist}

\bib{1}{article}{
  title={The logic of quasi-MV algebras},
  author={Bou, F{\'e}lix and Paoli, Francesco and Ledda, Antonio and Spinks, Matthew and Giuntini, Roberto},
  journal={Journal of Logic and Computation},
  volume={20},
  number={2},
  pages={619--643},
  year={2010},
  publisher={Oxford University Press}
}

\bib{8}{article}{
  title={The logic of quasi-MV* algebras},
  author={Cai, Lei and Jiang, Yingying and Chen, Wenjuan},
  journal={Submitted}
}

\bib{2}{article}{
  title={Logic with positive and negative truth values},
  author={Chang, Chen Chung},
  year={1971}
}

\bib{2.1}{book}{
  title={Logic and implication},
  author={Cintula, Petr and Noguera, Carles},
  year={2021},
  publisher={Springer}
}

\bib{3}{article}{
  title={A generalization of rotational invariance for complex fuzzy operations},
  author={Dai, Songsong},
  journal={IEEE Transactions on Fuzzy Systems},
  volume={29},
  number={5},
  pages={1152--1159},
  year={2020},
  publisher={IEEE}
}

\bib{4}{article}{
  title={On partial orders in complex fuzzy logic},
  author={Dai, Songsong},
  journal={IEEE Transactions on Fuzzy Systems},
  volume={29},
  number={3},
  pages={698--701},
  year={2019},
  publisher={IEEE}
}

\bib{5}{article}{
  title={On complex fuzzy S-implications},
  author={Dick, Scott},
  journal={IEEE Transactions on Emerging Topics in Computational Intelligence},
  volume={6},
  number={2},
  pages={409--415},
  year={2020},
  publisher={IEEE}
}

\bib{6}{article}{
  title={Toward complex fuzzy logic},
  author={Dick, Scott},
  journal={IEEE Transactions on Fuzzy Systems},
  volume={13},
  number={3},
  pages={405--414},
  year={2005},
  publisher={IEEE}
}

\bib{7}{article}{
  title={On Pythagorean and complex fuzzy set operations},
  author={Dick, Scott and Yager, Ronald R and Yazdanbakhsh, Omolbanin},
  journal={IEEE Transactions on Fuzzy Systems},
  volume={24},
  number={5},
  pages={1009--1021},
  year={2015},
  publisher={IEEE}
}

\bib{7.1}{article}{
  title={Basic fuzzy logic and BL-algebras},
  author={H{\'a}jek, Petr},
  journal={Soft computing},
  volume={2},
  pages={124--128},
  year={1998},
  publisher={Springer}
}

\bib{9}{article}{
  title={Quasi-MV* algebras: a generalization of MV*-algebras},
  author={Jiang, Yingying and Chen, Wenjuan},
  journal={Soft Computing},
  volume={26},
  number={15},
  pages={6999--7015},
  year={2022},
  publisher={Springer}
}

\bib{10}{article}{
  title={MV-algebras and quantum computation},
  author={Ledda, Antonio and Konig, Martinvaldo and Paoli, Francesco and Giuntini, Roberto},
  journal={Studia Logica},
  volume={82},
  pages={245--270},
  year={2006},
  publisher={Springer}
}

\bib{11}{article}{
  title={MV*-algebras},
  author={Lewin, Renato and Sagastume, Marta and Massey, Pedro},
  journal={Logic Journal of the IGPL},
  volume={12},
  number={6},
  pages={461--483},
  year={2004},
  publisher={OUP}
}

\bib{12}{incollection}{
  title={Paraconsistency in Chang's logic with positive and negative truth values},
  author={Lewin, Renato A and Sagastume, Marta S},
  booktitle={Paraconsistency},
  pages={381--396},
  year={2002},
  publisher={CRC Press}
}

\bib{13}{article}{
  title={Chang's $\L^*$ logic},
  author={Lewin, Renato and Sagastume, Marta and Massey, Pedro},
  journal={Logic Journal of the IGPL},
  volume={12},
  number={6},
  pages={485--497},
  year={2004},
  publisher={OUP}
}

\bib{14}{article}{
  title={Comment on Pythagorean and complex fuzzy set operations},
  author={Liu, Lianzhen and Zhang, Xiangyang},
  journal={IEEE Transactions on Fuzzy Systems},
  volume={26},
  number={6},
  pages={3902--3904},
  year={2018},
  publisher={IEEE}
}

\bib{14.1}{article}{
  title={EQ-algebras},
  author={Nov{\'a}k, Vil{\'e}m and De Baets, Bernard},
  journal={Fuzzy sets and systems},
  volume={160},
  number={20},
  pages={2956--2978},
  year={2009},
  publisher={Elsevier}
}

\bib{15}{article}{
  title={Complex fuzzy logic},
  author={Ramot, Daniel and Friedman, Menahem and Langholz, Gideon and Kandel, Abraham},
  journal={IEEE transactions on fuzzy systems},
  volume={11},
  number={4},
  pages={450--461},
  year={2003},
  publisher={IEEE}
}

\bib{15.1}{article}{
  title={Quantum B-algebras},
  author={Rump, Wolfgang},
  journal={Central European Journal of Mathematics},
  volume={11},
  pages={1881--1899},
  year={2013},
  publisher={Springer}
}

\bib{16}{incollection}{
  title={The theory and applications of generalized complex fuzzy propositional logic},
  author={Tamir, Dan E and Last, Mark and Kandel, Abraham},
  booktitle={Soft computing: state of the art theory and novel applications},
  pages={177--192},
  year={2013},
  publisher={Springer}
}

  \end{biblist}
\end{bibdiv}
\raggedright
\end{document}